\documentclass[a4paper,12pt]{amsart}

\usepackage[utf8]{inputenc}
\usepackage[T1]{fontenc}
\usepackage{amsmath,amssymb,amsfonts,amsthm,mathrsfs}
\usepackage{geometry}
\usepackage{graphicx}
\usepackage{float}
\usepackage[english]{babel}
\usepackage{tikz}
\usetikzlibrary{arrows}
\usetikzlibrary{decorations.markings}

\theoremstyle{plain}
\newtheorem{df}{Definition}
\newtheorem{thm}{Theorem}
\newtheorem{lem}{Lemma}
\newtheorem{prop}{Proposition}

\newtheorem{cor}{Corollary}
\newtheorem*{thm*}{Theorem}
\newcommand{\mot}{}
\newtheorem*{thmref_interne}{\mot{}}
\newenvironment{thmref}[2]{
	\renewcommand{\mot}{#1 #2}
	\begin{thmref_interne}}
	{\end{thmref_interne}
}

\newcommand\mb{\mathbb}
\newcommand\mc{\mathcal}

\newcommand\mr{\mathrm}
\newcommand\ms{\mathscr}

\renewcommand\emph[1]{\textup{\textbf{#1}}}

\title{
Irrational series I\\ Laplace transform in a neighborhood of $-\infty$
}

\author{Olivier Thom}
\date{}

\begin{document}
\maketitle

\begin{abstract}
Discrete sums of exponentials $g(w) = \sum a_{\beta} \mr{e}^{\beta w}$ with positive exponents may converge not normally in neighborhoods $H$ of $-\infty$ which do not contain half-planes.
In order to obtain a decomposition of a holomorphic function $g$ in $H$ as a sum of exponentials we study the Laplace transform in general neighborhoods of $-\infty$.

We adress questions such as continuity of Laplace and inverse Laplace transformations, continuity for the operation of taking partial sums, and resummation formulas. 
\end{abstract}

\tableofcontents
\newpage

\section{Introduction}

\subsection{Motivation}

This article originated in an effort to better understand germs of diffeomorphisms $f(z) = \lambda z + a_2 z^2 +\ldots$ in one complex variable (ideally, obtain a complete analytic classification).
The analytic classification still remains to be done in the diophantine case $\lambda = \mr{e}^{2i\pi \alpha}$ where $\alpha\in \mathbb{R}$ is irrational and well-aproximated by rational numbers.
The history of this problem can be found for example in the introduction of \cite{yoccoz_diviseurs}.

Without entering the details, let me explain the relation between diffeomorphisms and the present article.
The diffeomorphism $f$ is a priori defined in a small disk $\mb{D}$, and we can lift it to the universal cover $\mb{H}$ of the pointed disk $\mb{D}^*$.
The covering application is $w\in \mb{H} \mapsto z=\mr{e}^w\in \mb{D}^*$, and the lift of $f$ is $\tilde{f}(w) = w + 2i\pi \alpha + a_2\mr{e}^w + \ldots$.
The quotient of $\mb{H}$ by the dynamics of $\tilde{f}$ can be encoded in the uniformizing function $g$ realizing the quotient $g : \mb{H}/f \rightarrow \mb{D}^*$ (normalized to be a coordinate on $\mb{H}/f$ ; we see $g$ as a germ at $-\infty$).
This function $g$ can be shown to be bounded in a neighborhood $H\subset \mb{H}$ of $-\infty$, and in the case where $f$ is analytically linearizable, we can show that $g$ is a normally convergent series 
\begin{equation}
\label{eq_uniformizing}
g(w) = \sum_{p,q} a_{p,q} \mr{e}^{(p+q/\alpha)w}.
\end{equation}

It seems natural to suspect that even when $f$ is not linearizable, the function $g$ can still be written as a similar sum.
However, for these kind of series, normal convergence is too strong, and the convergence of $g$ must be understood in a different way.
Note for example that when $\beta\in \mathbb{R}^+$ and $h \to 0$, then 
\[
\frac{\mr{e}^{(\beta+h)w}- \mr{e}^{\beta w}}{h} \underset{h\to 0}{\longrightarrow} \frac{d}{dt}(\mr{e}^{tw})\vert_{t=\beta} = w\mr{e}^{\beta w} :
\]
the coefficients of these series can be quite big while the sum remains bounded.

As a matter of fact, this particular problem has already been studied by Écalle in \cite{ecalle_small_denominators} \footnote{A big thanks for P. Hazard for pointing out the works of Écalle and for F. Loray, G. Casale and J-M. Lion for communicating the contributions of Malgrange.}.
In this article, Écalle claims that the linearizing function $f^*$, solution of the equation $f^*\circ \tilde{f} = f^* + 2i\pi \alpha$ is of the form $f^*(w) = w + \sum c_{p,q}\mr{e}^{(p+q/\alpha)w}$.
Of course, we can switch between $f^*$ and $g$ writing $g= \mr{e}^{\alpha^{-1} f^*}$ and $f^* = \alpha\: \mr{log}(g)$.

In this article, we study the Laplace transform of a bounded function $g$ on neighborhoods $H\subset \mathbb{C}$ of $-\infty$ to be able to view $g$ as a (generalized) sum of exponentials.
In a forthcoming article, we will study more precisely the sums $\sum_{p,q\geq 0} a_{p,q} \mr{e}^{(p+q/\alpha)w}$, which we will call irrational series.
These two articles should allow for a more systematic treatment for series of exponentials, in particular we can obtain a different proof that uniformizing functions $g$ can be written as in \eqref{eq_uniformizing}.
Indeed, moving the parameter $\alpha$ of the diffeomorphism $f$ (without changing the other coefficients) allows to see $f$ as a limit of linearizable functions $f_{\alpha}$ (uniform in some neighborhood $H$), and to uniformize the quotients $\mb{H}/f_{\alpha}$ in family.
Then $g$ is the uniform limit of some uniformizing functions $g_{\alpha}$ which we know to be of the form \eqref{eq_uniformizing} : we can conclude with the continuity properties for Laplace transforms proven in this article.

\subsection{Laplace transform and hyperfunctions}

In order to decompose a function $g(w)$, bounded in a neighborhood $H\subset \mathbb{C}$ of $-\infty$, we can use Laplace transform $h(p) = \mc{L}g(p)$.
As is well-known, when $g(w) = \mr{e}^{\beta w}$ for some $\beta>0$, then $\mc{L}g(p) = \frac{1}{p-\beta}$.
We can interpret the function $\mc{L}g$ as a hyperfunction as defined by Sato in the articles \cite{sato_hyperfunctions1} and \cite{sato_hyperfunctions2}.
Hyperfunctions are some kind of distributions, more precisely they are elements in the dual of the set of analytical functions.

Let us explain roughly the idea behind the classical theory of hyperfunctions here ; the reader can learn more for example in \cite{morimoto_hyperfunctions}.
A hyperfunction supported on the compact interval $I\subset \mathbb{R}$ is an equivalence class of holomorphic functions $h\in \mc{O}(U\setminus I)$ modulo holomorphic functions on $U$ (where $U$ can be any neighborhood of $I$ in $\mathbb{C}$).
For each $h\in \mc{O}(U\setminus I)$, we can define a linear form on the space of analytic functions $\varphi$ on $I$ by the formula
\[
\frac{1}{2i\pi} \int_{\gamma} h(p) \varphi(p) dp.
\]
Here, $\gamma$ is a loop circling once around $I$ : each analytic function $\varphi$ can be extended in a neighborhood of $I$, and taking $\gamma$ close enough to $I$ this formula is well-defined and does not depend on $\gamma$.

In the case where $h(p) = \frac{1}{p-\beta}$, the linear form we obtain is the dirac $\delta_{\beta}$.
A finite sum $g(w) = \sum a_{\beta} \mr{e}^{\beta w}$ has Laplace transform $h(p) = \sum a_{\beta} \frac{1}{p-\beta}$, which corresponds to the distribution $D = \sum a_{\beta} \delta_{\beta}$.
The inverse Laplace transform consists at evaluating $D$ on the exponential function $\mr{e}_w(t)=\mr{e}^{t w}$ since $\langle D, \mr{e}_w \rangle = \sum a_{\beta} \mr{e}^{\beta w}$.

We are interested in infinite sums $\sum a_{\beta} \mr{e}^{\beta w}$, thus the integration path $\gamma$ needed to realize the inverse Laplace transform must be infinite (we will take a path "circling once around $\mathbb{R}^+$").
Because this path is not compact, we also need to modify the definition of hyperfunctions.
In fact, since we will need to change the definitions and check that everything works fine in this context, no previous knowledge about hyperfunctions will be needed in this article.

One note about Laplace transform in neighborhoods $H$ : another idea to decompose a function $g$ defined in a neighborhood $H$ is to consider an injective map $\varphi: \mb{H} \rightarrow H$, and apply the classical Borel-Laplace transform for the function $g\circ \varphi$ defined on $\mb{H}$ (here $\mb{H} = \{ w\in \mathbb{C} \:|\: \mr{Re}(w) < 0\}$).
This idea was used by Écalle in \cite{ecalle_small_denominators} (see his Proposition 3.15), or even by Ilyashenko, as presented for example in \cite{ilyashenko_centennial} (see section 3.4).
When this trick allows to answer a problem, it certainly gives a fast answer ; however Laplace transform is not very well-behaved with respect to changes of coordinates (in general, we want to decompose $g(w)$ as a sum of $\mr{e}^{\beta w}$, not as a sum of $\mr{e}^{\beta \varphi^{-1}(w)}$).
In any case, doing the Laplace transform directly in the neighborhood $H$ is certainly vantageous for clarity and to give a more systematic treatment of these questions.

\subsection{Results}

The first result of this article is Theorem \ref{thm_1}. The exact statement requires most of the definitions of section \ref{sec_definitions} ; roughly stated, it says that Laplace and inverse Laplace transforms realize a correspondence between bounded holomorphic functions on neighborhoods of $-\infty$ and hyperfunctions supported on $\mathbb{R}^+$ with exponential growth.
In this correspondence, the shape of the neighborhood corresponds to the growth of the hyperfunction. 

The next results concern functions defined in what we call logarithmic neighborhoods : they are those neighborhoods $H\subset \mathbb{C}$ of the form $ \{ x+iy \in \mathbb{C} \:|\: x\leq \rho(|y|) \}$ with $\rho(y) = w_0 - k\:\mr{log}(y)$ for $w_0,k\in \mathbb{R}$ and $y$ big enough.
In the correspondence of Theorem \ref{thm_1}, they correspond to hyperfunctions whose order grows linearly, as explained in sections \ref{sec_logarithmic_half-plane} and \ref{sec_finite_order}.

We can define partial sums of a bounded holomorphic function $g(w)$ in a neighborhood $H$ by the formula 
\[
S_{[\beta_1,\beta_2]}g(w) = \frac{1}{2i\pi} \int_{\gamma} \mc{L}g(p)\mr{e}^{pw}dp,
\]
where $\beta_1,\beta_2\in \mathbb{R}$ and $\gamma$ is a small loop circling once around $[\beta_1,\beta_2]$.
In the case where $g(w) = \sum_{\beta\in R} a_{\beta} \mr{e}^{\beta w}$, then $S_{[\beta_1,\beta_2]}g(w) = \sum_{\beta_1< \beta < \beta_2} a_{\beta} \mr{e}^{\beta w}$, if $R$ is a discrete subset of $\mathbb{R}^+$ with $\beta_1,\beta_2\notin R$.
We can define similarly remainders $S_{[\beta_1,\infty[}g(w)$.

The aplication $g \mapsto S_{[\beta_1,\beta_2]}g$ is not continuous in general : think for example of $g_t(w) = \frac{1}{2t} (\mr{e}^{(\beta_1+t)w} - \mr{e}^{(\beta_1-t)w})$.
In order to obtain continuity of partial sums, we have to suppose that the support of $\mc{L}g$ does not come close to the cutting points $\beta_1,\beta_2$ : for this purpose we introduce the set $E_0^{(I)}(H)$ of bounded holomorphic functions $g$ on $H$ such that the support of $\mc{L}g$ does not intersect $I$ (the space $E_0^{(I)}(H)$ is equipped with the supremum norm).

\begin{thmref}{Theorem}{\ref{thm_partial_sums}}
Consider a logarithmic neighborhood $H$ contained in $\{\mr{Re}(w)\leq 0\}$, two points $\beta_1, \beta_2 \in \mathbb{R}$, a number $r>0$ and $I=]\beta_1-r, \beta_1+r[\cup ]\beta_2-r,\beta_2+r[$.
The application $S_{[\beta_1,\beta_2]}: E_0^{(I)}(H) \rightarrow E_0^{(I)}(-1+H)$ is continuous.

Similarly, if $\beta_3\in \mathbb{R}^+$ and $I=]\beta_3-r,\beta_3+r[$, the application $S_{[\beta_3,+\infty[}: E_0^{(I)}(H) \rightarrow E_0^{(I)}(-1+H)$ is continuous.
\end{thmref}

In particular, for functions $g$ which are discrete sums $g(w) = \sum_{\beta\in R} a_{\beta} \mr{e}^{\beta w}$ ($R\subset \mathbb{R}^+$ discrete and fixed), the application $g \mapsto a_{\beta}$ is continuous.
The following corollary is also of interest:

\begin{thmref}{Corollary}{\ref{cor_limit_series}}
Consider subsets $R_n,R\subset \mathbb{R}^+$, enumerated by increasing sequences $(\beta_{n,k})_k$, $(\beta_k)_k$.
Consider a logarithmic half-plane $H$ and bounded functions $g_n,g\in E_0(H)$.

Suppose that $g_n \rightarrow g$ uniformly, that $\beta_{n,k} \underset{n\to\infty}{\longrightarrow} \beta_k$ for each $k$, and that the support of the hyperfunction $\mc{L}g_n$ is contained in $R_n$ for each $n$.
Then the support of $\mc{L}g$ is contained in $R$.

Moreover, for each $\beta_1,\beta_2\notin S$, the partial sums $S_{[\beta_1,\beta_2]}g_n$ are well-defined for $n$ big enough, and $S_{[\beta_1,\beta_2]}g_n \rightarrow S_{[\beta_1,\beta_2]}g$ uniformly on $-1+H$.
\end{thmref}

However, the partial sums $S_{[-1,n]}g$ do not converge to $g$ when $n$ tends to infinity, and finding summation formulas to obtain the value of a function $g(w)=\sum a_{\beta} \mr{e}^{\beta w}$ from its coefficients $a_{\beta}$ is more delicate than it seems.
We study this question in section \ref{sec_summation_formulas} : in section \ref{sec_DIPP} we introduce the notion of diagonal extraction of integration by parts for the function $g$ (DIPP for short), noted $I_1^{\Delta}(\mc{L}g,\mr{e}^{wt})$ and prove that this gives a way to compute the function $g(w)$ :

\begin{thmref}{Theorem}{\ref{thm_DIPP}}
The diagonal integration by parts $I_1^{\Delta}(\mc{L}g,\mr{e}^{wt})$ converges normally in a logarithmic neighborhood $H'$, and for every $w\in H'$, we have 
\[
\mc{L}^{-1}\mc{L}g(w) = \frac{1}{2i\pi} I_1^{\Delta}(\mc{L}g,\mr{e}^{wt}).
\]
\end{thmref}

This explains why partial sums $S_{[-1,n]}g$ do not converge to $g$ in general : looking at the DIPP what we obtain is the convergence of what we will call evanescent partial sums $\widetilde{S}_ng(w) = S_{[-1,n]}g + BT^2_n(w)$, where $BT^2_n$ are the border terms of the integration by parts on the interval $[-1,n]$.
These evanescent partial sums satisfy $g(w) - \widetilde{S}_ng(w) = O(\mr{e}^{nw})$ when $\mr{Re}(w) \rightarrow -\infty$.
They are the sum of the partial sums and the "evanescent term" $BT^2_n$ which converges formally to zero.
This result is stated in the

\begin{thmref}{Theorem}{\ref{thm_evanescent}}
The sequence of evanescent partial sums $\widetilde{S}_ng$ tends uniformly to $g$ on a logarithmic neighborhood of $-\infty$.
\end{thmref}

When $g(w) = \sum_{\beta\in R} a_{\beta} \mr{e}^{\beta w}$, the evanescent partial sum $\widetilde{S}_ng$ can be explicitely computed in function of the coefficients $a_{\beta}$, and only depend on the coefficients $a_{\beta}$ for $\beta\leq n$.

\section{Laplace transform}

\subsection{Definitions}
\label{sec_definitions}

Let us begin with some notations.
We will write the ray starting at $w_0\in \mathbb{C}$ with direction $v\in \mathbb{C}^*$ as
\[
R(w_0,v) := \{ w = w_0 + t v, t\in \mathbb{R}^+\}.
\]
We will write the left-handed open cone of vertex $w_0$ and opening $2\theta$ for $\theta\in [0,\pi/2[$ as
\[
C_-(w_0,\theta) := w_0 - \{ w\in \mathbb{C}^* \:|\: |\mr{Arg}(w)| < \theta \},
\]
and the right-handed open cone of vertex $\beta\in \mathbb{R}$ and opening $2\theta$ for $\theta\in [0,\pi/2]$ as
\[
C_+(\beta,\theta) := \beta + \{ p \in \mathbb{C}^* \:|\: |\mr{Arg}(p)| < \theta \}.
\]
We will denote by $\overline{C}_-$ or $\overline{C}_+$ the corresponding closed cones.

The open half-plane of direction $v\in \mathbb{C}^*$ will be denoted 
\[
\mathbb{H}_v := \{ p\in \mathbb{C} \:|\: \mr{Re}(p\bar{v})>0 \}.
\]

We will introduce the following function to measure the distance to $\mathbb{R}^+$
\[
N(p) := \left\{ \begin{aligned} &|p| \quad \text{if $\mr{Re}(p)\leq 0$}\\ & |\mr{Im}(p)|\quad \text{if $\mr{Re}(p)>0$} \end{aligned} \right.
\]

\begin{df}
In this paper, we will say that a subset $H\subset \mathbb{C}$ is a neighborhood of $-\infty$ if for every angle $\theta\in [0, \pi/2[$, there is a point $w_0\in H$ such that the cone $C_-(w_0,\theta)$ is contained in $H$.

For each $w_0\in H\cap \mathbb{R}$, we can introduce the biggest angle $\theta(w_0)\leq \pi/2$ such that $C_-(w_0,\theta(w_0)) \subset H$.
To simplify topological questions, we will always suppose that $H = \cup_{w_0} C_-(w_0,\theta(w_0))$ is a union of such cones, symmetric about $\mathbb{R}$.

Let us write $\ms{N}(-\infty)$ the set of all neighborhoods of $-\infty$ as above.
\end{df}

Note that when $H$ is a neighborhood of $-\infty$, the function $\theta(w)$ above satifies $\theta(w) \underset{w\to -\infty}{\longrightarrow} \pi/2$, so that the union $\cup_{w} C_-(w,\theta(w))$ is also a neighborhood of $-\infty$.

The size of the neighborhood $H$ can be measured by the function $\theta(\cdot): \mathbb{R}^- \rightarrow [0,\pi/2]$, or by the border $\partial_{}H$ of the set $H$.
When $H = \cup_{w\in ]-\infty,a]} C_-(w,\theta(w))$, the border $\partial_{}H$ can be described as 
\[
\partial_{}H = \{ x+iy\in \mathbb{C} \:|\: x = \rho(|y|) \},
\]
where $\rho: \mathbb{R}^+ \rightarrow \mathbb{R}^-$ is by definition a convex function (here, the number $a\in \mathbb{R}$ is the supremum of $H\cap \mathbb{R}$).
Of course, $\partial_{}H$ is the enveloppe of the family of curves $R(w,-\mr{e}^{\pm 2i\pi\theta(w)})$ for $w\in ]-\infty,a]$, and the correspondence between $\rho(\cdot)$ and $\theta(\cdot)$ can be found in this way.

\begin{df}
For any $H\in \ms{N}(-\infty)$, we will write $\Theta_H: w\in \mathbb{R}^-\cap H \mapsto \theta(w)\in [0,\pi/2]$ the function giving the biggest cone such that $\overline{C}_-(w,\Theta_H(w))\subset \overline{H}$.
The inverse $\Theta_H^{-1}:]0,\pi/2[ \rightarrow \mathbb{R}^-$ of the function $\Theta_H$ gives the rightmost cone $\overline{C}_-(\Theta_H^{-1}(\theta),\theta)$ of opening $\theta$ contained in $\overline{H}$.
\end{df}

Note that $\Theta_H$ and $\Theta_H^{-1}$ are inverse to one another only generically (when the function $\rho$ is smooth and strictly convex).
We can always approximate $H$ by such a neighborhood, but this won't be needed since the description given above for $\Theta_H$ and $\Theta_H^{-1}$ defines them independently.

\begin{df}
We define the family of logarithmic neighborhoods of $-\infty$ (or, logarithmic half-planes) as the set $\ms{LN}(-\infty)\subset \ms{N}(-\infty)$ of those neighborhoods for which the function $\rho(\cdot)$ satisfies $\rho(y) = w_0 -k\mr{log}(y)$ for some $k>0$ and $w_0\in \mathbb{R}$, for $y$ big enough.

The pair $(w_0,k)$ is called the type of the logarithmic neighborhood ; we denote by $\ms{LN}_{w_0,k}(-\infty)$ the set of neighborhoods of type $(w_0,k)$.
\end{df}

Of course, a neighborhood containing a logarithmic one can also be considered as a logarithmic neighborhood by restriction.
In particular, a neighborhood defined by a function $\rho$ with $\rho(y)\geq w_0-k\mr{log}(y)$ for some $w_0,k$ and any $y$ big enough can be seen as a logarithmic neighborhood.

\begin{df}
For every $H\in \ms{N}(-\infty)$, let us write $E_M(H)$ the set of functions with exponential growth of order $M$ which extend continuously to the border : a holomorphic function $g\in \mc{O}(H)$ extending continuously to the border belongs to $E_M(H)$ if there is a constant $C>0$ such that $|g(w)|\leq C \mr{e}^{M\:\mr{Re}(w)}$ for every $w\in H$.

In particular, $E_0(H)$ is the set of bounded holomorphic functions on $H$ which extends continuously to the border.
\end{df}

We will sometimes write "bounded function $g\in E_0(H)$" for short : just to be clear, all our bounded holomorphic function will be supposed to extend continuously to the border.
In fact, we only need the hypothesis of continuous extension to the border to be able to integrate along the border of $H$, which is only needed when $H$ is a straight half-plane $H=\{\mr{Re}(w) < a \}$.
In the other cases, this precaution is unnecessary.

Note that since our set $H$ is oriented towards $\mathbb{R}^-$, the functions in $E_M(H)$ for $M>0$ are actually exponentially decreasing when $\mr{Re}(w)$ tends to $-\infty$.
Since we have in mind the Laplace transform of the function $g$, and that to every $g\in E_M(H)$ we can associate the function $w\mapsto g(w)\mr{e}^{-M w}\in E_0(H)$, it will be enough to study bounded holomorphic functions in $H$.

We can topologize the vector space $E_0(H)$ with the norm $\|g\|_H = \mr{sup}_H (|g(w)|)$.

\begin{df}
An open set $V\subset \mathbb{C}$ is called an angular neighborhood of $\mathbb{R}^+$ if it contains a closed cone $\overline{C}_+(-\varepsilon,\theta)$ for some $\varepsilon,\theta>0$.

To simplify topological questions, we will always suppose that $V$ is either $\mathbb{C}$ or an open cone $C_+(-\varepsilon,\theta)$ for some $\varepsilon,\theta>0$.
To every neighborhood $V$ we will associate $V^- = V\setminus \mathbb{R}^+$ (it is still connected).

Let us write $\ms{N}(\mathbb{R}^+)$ (resp. $\ms{N}^-(\mathbb{R}^+)$) the set of all angular neighborhoods of $\mathbb{R}^+$ as above (resp. the set of all $V^-$ where $V$ is an angular neighborhood).
To simplify, we will still call those $V^-$ neighborhoods whenever this does not seem confusing.
\end{df}

\begin{df}
A holomorphic function $h\in \mc{O}(V^-)$ in a neighborhood $V^-\in \ms{N}^-(\mathbb{R}^+)$ will be said to have exponential growth along rays if for every cone $\overline{C}_+(-\varepsilon,\theta)\subset V$, there exist $C,M>0$ such that $|h(p)|\leq C\mr{e}^{M\: \mr{Re}(p)}$ for every $p\in V\setminus \overline{C}_+(-\varepsilon,\theta)$.

We will denote by $\ms{E}(V^-)$ the set of holomorphic functions with exponential growth along rays in $V^-$.

Given a function $\mu:\theta \mapsto M=\mu(\theta)$, we can introduce the subset $\ms{E}_{\mu}(V^-)\subset \ms{E}(V^-)$ for which $\mr{sup}_{\varepsilon,\theta,p} N(p)\mr{e}^{-\mu(\theta) \mr{Re}(p)}|h(p)| < \infty$, where the supremum is for every admissible $\varepsilon,\theta$ and every $p\in V\setminus \overline{C}_+(-\varepsilon,\theta)$.
\end{df}

If $h$ belongs to $\ms{E}_{\mu}(V^-)$, then we can define the norm $\|h\|_\mu$ of $h$ to be the supremum above: we then have 
\[
|h(p)| \leq \|h\|_\mu \frac{\mr{e}^{\mu(\mr{Arg}\:p)\mr{Re}(p)}}{N(p)}
\]

\begin{df}
A holomorphic function $h\in \mc{O}(V)$ in a neighborhood $V\in \ms{N}(\mathbb{R}^+)$ will be said to have exponential growth of order $M>0$ if there exists $C>0$ such that $|h(p)| \leq C\mr{e}^{M\mr{Re}(p)}$ for every $p\in V$.
We denote by $E_M(V)$ the set of those functions and $E(V) = \cup_M E_M(V)$ the set of functions with exponential growth.
\end{df}

\begin{df}
We define the set of hyperfunctions $H^1_{\mathbb{R}^+}(V,\ms{E})$ on a neighborhood $V\in \ms{N}(\mathbb{R}^+)$ as the quotient of $\ms{E}(V^-)$ by $E(V)$.
We will identify a function $h\in \ms{E}(V^-)$ with its class $[h]\in H^1_{\mathbb{R}^+}(V,\ms{E})$ whenever there is no confusion.
\end{df}

\begin{df}
We define the class $\ms{H}_\mu(V) \subset H^1_{\mathbb{R}^+}(V,\ms{E})$ as follows.
A hyperfunction $[h]$ represented by a function $h\in \ms{E}_\mu(V^-)$ belongs to $\ms{H}_\mu(V)$ if there exists a constant $C>0$ such that for every $\theta\in ]0,\frac{\pi}{2}[$, every $\varepsilon>0$ and every direction $v=-r\mr{e}^{i \alpha}$ with $|\alpha|\leq \frac{\pi}{2} - \theta$, there exists a function $f\in E(V)$ such that
\[
|h(p)-f(p)| \leq C \frac{\mr{e}^{\mu(\theta)\mr{Re}(p)-\mr{Re}(vp)}}{N(p)}
\]
for every $p\in V\setminus \overline{C}_+(-\varepsilon,\theta)$.
The smallest constant $C$ is called the norm $\| [h] \|_\mu$ of the hyperfunction $[h]$.
\end{df}

One might want to wait after the sections on Laplace and inverse Laplace transforms before trying to understand the meaning of this definition.

Logarithmic neighborhoods of $-\infty$ will correspond to the growth functions
\[
\mu_{a,k}(\theta) = a + k\:\mr{log}(\mr{cotan}(\theta)).
\]

\begin{df}
We define the class $\ms{E}_{a,k}(V^-)=\ms{E}_{\mu_{a,k}}(V^-)$ : this is the set of holomorphic functions $h$ in $V^-$ for which 
\[
|h(p)| \leq C \frac{\mr{e}^{a \mr{Re}(p)}\mr{Re}(p)^{k\mr{Re}(p)}}{|\mr{Im}(p)|^{k\mr{Re}(p)+1}}
\]
for every $p\in V^-$ with $\mr{Re}(p)>0$, where $C>0$ is a constant.

We denote by $\ms{H}_{a,k}=\ms{H}_{\mu_{a,k}}$ the corresponding hyperfunctions.
\end{df}

\subsection{Laplace transform on a neighborhood of $-\infty$}

In this section, we will consider a fixed neighborhood $H\in \ms{N}(-\infty)$ and define the Laplace transform $\mc{L}$ of a bounded holomorphic function $g\in E_0(H)$.

Consider a base point $w_0\in H$, and consider for any ray $R(w_1,v)\subset \overline{H}$ the broken path $\Gamma_{w_0,w_1,v} = [w_0,w_1]\cup R(w_1,v)$ (where $[w_0,w_1]$ can be any path joining $w_0$ and $w_1$ as long as it is included in $H$; for practical reasons we will only consider broken paths for which the segment $[w_0,w_1]$ is included in $\overline{H}$).

\begin{figure}[H]
\centering
\includegraphics[scale=0.6]{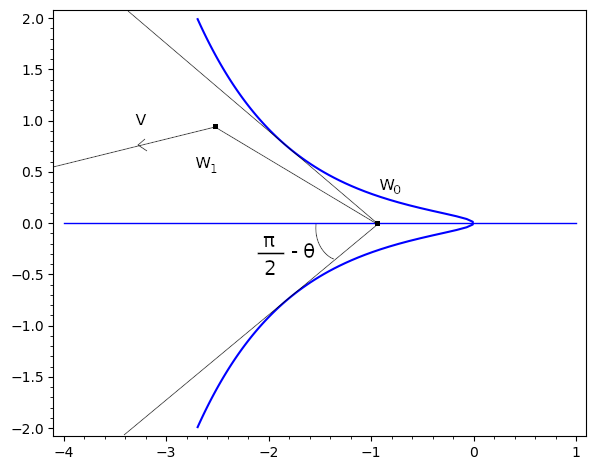}
\caption{integration path}
\end{figure}
\begin{figure}[H]
\centering
\includegraphics[scale=0.6]{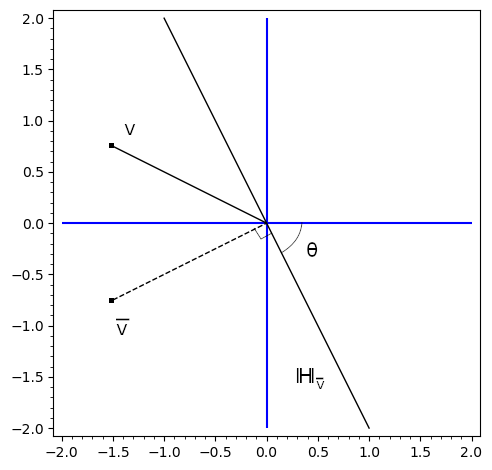}
\caption{domain of convergence}
\end{figure}

\begin{df}
For any broken path $\Gamma(w_0,w_1,v)\subset \overline{H}$, we define the Laplace transform of a bounded holomorphic function $g\in E_0(H)$ by the formula 
\[
\begin{aligned}
\mc{L}_{w_0,w_1,v}g(p) &:= \int_{\Gamma_{w_0,w_1,v}} g(w)\mr{e}^{-pw}dw\\
&= \int_{w_0}^{w_1} g(w) \mr{e}^{-pw}dw + \mr{e}^{-pw_1}\int_{t=0}^\infty g(w_1+tv) \mr{e}^{-ptv} vdt.
\end{aligned}
\]
We will also write 
\[
\mc{L}_{w_0,w_1}g(p) := \int_{w_0}^{w_1} g(w)\mr{e}^{-pw}dw.
\]
\end{df}

Since $g$ is bounded, the integral converges whenever $\mr{Re}(pv)>0$, that is for every $p$ in the open half-plane $\mb{H}_{\bar{v}}$ in the direction $\bar{v}$.

Now, whenever $p\in \mb{H}_{\bar{v}} \cap \mb{H}_{\bar{v}'}$ belongs to two such half-planes, the exponential decay of the integrand and Cauchy's theorem imply
\[
\mc{L}_{w_0,w_1,v}g(p) = \mc{L}_{w_0,w_1,v'}g(p).
\]
By the same reasons, if $R(w_1,v),R(w'_1,v)$ are two rays included in $H$ with the same direction, we have
\[
\mc{L}_{w_0,w_1,v}g(p) = \mc{L}_{w_0,w'_1,v}g(p).
\]
Thus the Laplace transform of $g$ does not depend of the direction $v$ nor on the point $w_1$, and can be defined for every $p$ in the union $\bigcup_{\mr{Re}(v)<0} \mb{H}_{\bar{v}} = \mathbb{C}\setminus \mathbb{R}^+$.
We will denote by $\mc{L}_{w_0}g\in \mc{O}(\mathbb{C}\setminus \mathbb{R}^+)$ this function.

Lastly, if $w_0,w'_0$ are two base points, we have obviously
\[
\mc{L}_{w_0}g(p) - \mc{L}_{w'_0}g(p) = \mc{L}_{w_0,w'_0}g(p) = \int_{w_0}^{w'_0} g(w)\mr{e}^{-pw}dw
\]
which is an entire function with exponential growth: $\mc{L}_{w_0,w'_0}g \in E(\mathbb{C})$.
We can then define the Laplace transform of $g$ as a hyperfunction supported by $\mathbb{R}^+$:
\[
\mc{L}g = [ \mc{L}_{w_0}g ] \in H_{\mathbb{R}^+}^1(\mathbb{C}, \ms{E}).
\]

\begin{prop}
\label{prop_Lg}
For any neighborhood $H\in \ms{N}(-\infty)$ and any bounded holomorphic function $g\in E_0(H)$, the Laplace transform $\mc{L}g$ is a well-defined hyperfunction $\mc{L}g\in H^1_{\mathbb{R}^+}(\mathbb{C},\ms{E})$.

More explicitely, for any $\theta\in ]0,\frac{\pi}{4}[$ we can consider $w_0 = \Theta^{-1}(\frac{\pi}{2}-\theta)$ so that $\overline{C}_-(w_0,\frac{\pi}{2}-\theta)\subset \overline{H}$.
For any $w_1=w_0+w_{01}\in \overline{C}_-(w_0,\frac{\pi}{2}-\theta)$ and any $p\in \mathbb{C}\setminus \overline{C}_+(0,\theta)$, we have
\[
|\mc{L}_{w_0}g(p)-\mc{L}_{w_0,w_1}g(p)| \leq 4\|g\|_H \frac{\mr{e}^{-w_0\mr{Re}(p)-\mr{Re}(w_{01}p)}}{N(p)}.
\]
In particular, $\mc{L}g$ belongs to $\ms{H}_\mu(\mathbb{C})$, where $\mu(\theta) = -w_0 = -\Theta^{-1}(\frac{\pi}{2}-\theta)$, and $\| \mc{L}g\|_\mu \leq 4\|g\|_H$.
\end{prop}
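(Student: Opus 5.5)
Well-definedness has essentially been settled in the discussion above: $\mc{L}_{w_0}g$ is holomorphic on $\mathbb{C}\setminus\mathbb{R}^+$ and independent of $w_1,v$, and a change of base point alters it by an element of $E(\mathbb{C})$. So the substance is the estimate. We then deduce membership in $\ms{E}(\mathbb{C}^-)$ and in $\ms{H}_\mu(\mathbb{C})$ by taking $f=-\mc{L}_{w_0,w_1}g$ up to base-point change, which lies in $E(\mathbb{C})$.

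For the estimate, fix $\theta$, $w_0=\Theta^{-1}(\frac{\pi}{2}-\theta)$, $w_1=w_0+w_{01}$ in the closed cone, and $p\notin\overline{C}_+(0,\theta)$. We write $\mc{L}_{w_0}g(p)-\mc{L}_{w_0,w_1}g(p)=\mr{e}^{-pw_1}\int_0^\infty g(w_1+tv)\mr{e}^{-ptv}v\,dt$, with a ray direction $v$ chosen depending on $p$ such that $R(w_1,v)\subset\overline{C}_-(w_1,\frac{\pi}{2}-\theta)\subset\overline{C}_-(w_0,\frac{\pi}{2}-\theta)\subset\overline{H}$. This requires $v=-\mr{e}^{i\alpha}$ with $|\alpha|\le\frac{\pi}{2}-\theta$. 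The integral is then bounded by $\|g\|_H\,\mr{e}^{-\mr{Re}(pw_1)}/\mr{Re}(pv)$, and $\mr{Re}(pw_1)=w_0\mr{Re}(p)+\mr{Re}(w_{01}p)$ because $w_0$ is real. So the whole task reduces to the geometric claim: for $p\notin\overline{C}_+(0,\theta)$ there is an admissible unit $v$ with $\mr{Re}(pv)\ge N(p)/4$.

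For that claim we split into cases. If $\mr{Re}(p)\le0$, take $v=-\bar p/|p|$ when this is admissible, which gives $\mr{Re}(pv)=|p|=N(p)$. When $p$ is near the imaginary axis, $-\bar p/|p|$ may fall outside the allowed sector; in that case take the boundary direction $v=-\mr{e}^{\pm i(\pi/2-\theta)}$ on the appropriate side. Then $\mr{Re}(pv)=|p|\cos(\text{angle})$, and since $\theta<\pi/4$ the angle is at most $\pi/2-\theta+\ldots$, so this needs care. If $\mr{Re}(p)>0$ and $|\mr{Arg}\,p|\ge\theta$, say $\mr{Im}\,p>0$, take $v=-\mr{e}^{-i(\pi/2-\theta)}=-\sin\theta+i\cos\theta$ or a nearby direction. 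Then $\mr{Re}(pv)=-\mr{Re}(p)\sin\theta+\mr{Im}(p)\cos\theta$. This is positive, but it can degenerate to $0$ at the edge of the cone, so a better choice is needed. I would instead take $v$ halfway between the edge direction and $-i$, i.e. of angle $\pi/2-\theta/2$ measured from $-1$. Then $\mr{Re}(pv)\ge|p|\sin(\theta/2)$, which compared with $N(p)=\mr{Im}\,p\le|p|$ gives a constant depending on $\theta$, not $4$.

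The main obstacle is therefore getting a $\theta$-independent constant such as $4$ uniformly up to the edge $|\mr{Arg}\,p|=\theta$. I expect this is handled by using the constraint $\theta<\pi/4$ and the freedom of moving $w_1$ (the factor $\mr{e}^{-\mr{Re}(w_{01}p)}$ absorbs shifts along the ray). Concretely, I would first integrate along the edge ray $v=-\mr{e}^{\mp i(\pi/2-\theta)}$ and split $p$ into the region where $\mr{Re}(pv)\ge\frac{1}{4}N(p)$ holds directly and a thin region near $\partial C_+(0,\theta)$. For the thin region, I would try to exploit that $\overline{C}_-(w_0,\frac{\pi}{2}-\theta)$ contains the closed edge rays, and possibly deform slightly using $\Theta_H(w)>\frac{\pi}{2}-\theta$ for $w<w_0$. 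Failing that, I would accept a constant depending on $\theta$; this still gives $\mc{L}g\in\ms{H}_\mu$.
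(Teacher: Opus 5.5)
Your reduction is the right one: everything hinges on an admissible direction $v$ with $\mr{Re}(pv)\geq N(p)/4$. The left half-plane causes no trouble. The choice $v=\bar p/|p|$ (not $-\bar p/|p|$, which points to the right) or the edge direction gives $\mr{Re}(pv)\geq|p|\cos\theta\geq N(p)/\sqrt{2}$.

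The gap is the region $\mr{Re}(p)>0$ with $|\mr{Arg}\,p|$ close to $\theta$, and it is not just a matter of the constant. With $w_0=\Theta^{-1}(\frac{\pi}{2}-\theta)$, the cone $\overline{C}_-(w_0,\frac{\pi}{2}-\theta)$ is tangent to $\partial H$. The bound must hold for every $w_1$ in the cone, including $w_1=w_0$, so you are confined to $|\alpha|\leq\frac{\pi}{2}-\theta$. Over that sector, $\sup_v\mr{Re}(pv)=|p|\sin(|\mr{Arg}\,p|-\theta)$, which tends to $0$ at the edge. Straight rays therefore give no bound of the form $C/N(p)$ there, not even with $C$ depending on $\theta$.

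The direction you then pick, at angle $\frac{\pi}{2}-\frac{\theta}{2}$ from $-1$, lies outside that sector, so $R(w_1,v)\subset\overline{H}$ is no longer guaranteed. Choosing $w_1$ is not allowed. Detouring to the left first costs a factor $\mr{e}^{L\,\mr{Re}(p)}$, which changes the exponent. Your fallback also fails on its own terms: the definition of $\ms{H}_\mu$ requires a single constant $C$ valid for all $\theta$.

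The missing idea, which is what the paper's proof actually does, is to move the base point rather than the ray. Take $w_0\in H\cap\mathbb{R}$ with $C_-(w_0,\frac{\pi}{2}-\frac{\theta}{2})\subset H$, and $w_1$ in that larger cone. In effect this replaces $\Theta^{-1}(\frac{\pi}{2}-\theta)$ by $\Theta^{-1}(\frac{\pi}{2}-\frac{\theta}{2})$, a reparametrization of $\mu$. The half-angle direction is then admissible. For $p=|p|\mr{e}^{i\phi}$ with $\theta<\phi\leq\frac{\pi}{2}$ it gives $\mr{Re}(pv)=|p|\sin(\phi-\frac{\theta}{2})\geq\frac{1}{2}|p|\sin\phi=\frac{1}{2}N(p)$. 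To check this, note that at $\phi=\theta$ it reads $\sin\frac{\theta}{2}\geq\sin\frac{\theta}{2}\cos\frac{\theta}{2}$, and the difference $\sin(\phi-\frac{\theta}{2})-\frac{1}{2}\sin\phi$ is increasing on $[\theta,\frac{\pi}{2}]$. For $\frac{\pi}{2}\leq\phi\leq\frac{\pi}{2}+\theta$, the same direction gives $\mr{Re}(pv)\geq|p|\cos\frac{\theta}{2}$.

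So the constant is uniform in $\theta$. Comparing $|p|\sin\frac{\theta}{2}$ with $N(p)\leq|p|$ was simply too crude. Your instinct that the tangent cone alone cannot give a uniform $1/N(p)$ bound at the edge is sound; this shift of $w_0$ is exactly how the proof avoids it.
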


\begin{proof}
We need to estimate the growth of $\mc{L}_{w_0}g(p)$ along rays.
First of all, consider a number $p\in \mathbb{C}\setminus \overline{C}_+(0,\theta)$ and choose an angle $\alpha$ such that $|\alpha|\leq \frac{\pi}{2}-\theta$, with the associated direction $v=-\mr{e}^{i \alpha}$.
Choose a point $w_1$ such that the closed cone $\overline{C}_-(w_1,\alpha)$ be included in $H$.
We can then compute the Laplace transform along the path $\Gamma_{w_1,w_1,v}$ and obtain the following estimate:

\[
\begin{aligned}
|\mc{L}_{w_1,w_1,v}g(p)| &\leq \|g\|_H \mr{e}^{-\mr{Re}(pw_1)}\int_{t=0}^\infty \mr{e}^{-\mr{Re}(pv)t} dt\\
&\leq \|g\|_H \frac{\mr{e}^{-\mr{Re}(pw_1)}}{\mr{Re}(pv)}.
\end{aligned}
\]

Consider $\theta\in ]0,\frac{\pi}{2}[$ and $\varepsilon>0$.
Choose $w_0\in H\cap \mathbb{R}$ such that the cone $C_-(w_0,\frac{\pi}{2}-\frac{\theta}{2})$ be included in $H$.
Note first that $\mc{L}_{w_0,w_1,v}g(p) = \mc{L}_{w_0,w_1}g(p) + \mc{L}_{w_1,w_1}g(p)$ and that $\mr{Re}(pw_1) = w_0\mr{Re}(p) + \mr{Re}(pw_{01})$.

For any $p\in \mathbb{C}\setminus \overline{C}_+(0,\theta)$, we can choose the angle $\alpha$ (or the direction $v=-\mr{e}^{i \alpha}$) which maximizes $|\mr{Re}(pv)|$.
Concretely, we can decompose $\mathbb{C}\setminus \overline{C}_+(0,\theta)$ in three regions: $\mathbb{C}\setminus \overline{C}_+(0,\theta) = U_-\cup U_i\cup U_{-i}$, where $p\in U_-$ if $\mr{Arg}(p)\in ]\frac{\pi}{2}+\theta,\frac{3\pi}{2}-\theta[$; $p\in U_i$ if $\mr{Arg}(p)\in ]\theta,\frac{\pi}{2}+\theta]$; and $p\in U_{-i}$ if $\mr{Arg}(p)\in [\frac{3\pi}{2}-\theta,2\pi-\theta[$.
In the region $U_-$, we can choose $\alpha=-\mr{Arg}(p)$; in $U_i$, we will put $\alpha = \frac{3\pi}{2}- \frac{\theta}{2}$; finally, in $U_{-i}$ the natural choice is $\alpha=\frac{\pi}{2}+\frac{\theta}{2}$.

In $U_-$ we get $\mr{Re}(pv) = |p|$, in the regions $U_{\pm i}$ this gives 
\[
\mr{Re}(pv) = |p|\:\mr{sin}( |\mr{Arg}(p)- \theta/2 |)\geq |p|\mr{sin}( |\theta/2 |) \geq |p| \mr{sin}(|\theta|)/2
\]
by concavity of the sinus.
We will write this inequality as $\mr{Re}(pv)\geq |\mr{Im}(p)|/2$.

We deduce the estimate of the proposition for most of the points $p$. In the case when $\mr{Re}(p)<0$ but $p\in U_{\pm i}$, the estimate of the denominator is $\mr{Re}(pv)\geq |\mr{Im}(p)|/2$, but in this region, we have $|p|\leq 2 |\mr{Im}(p)|$ and so $\mr{Re}(pv)\geq N(p)/4$.
(This modification allows us to have a more convenient function $N(p)$ in the majoration, which does not depend on $\theta$.)

The fact that $\mc{L}_{w_0}g$ be an element of $\ms{E}(\mathbb{C}\setminus \mathbb{R}^+)$ can be obtained by any majoration of the functions $1/|p|$ and $1/|\mr{Im}(p)|$ on the complement of the cone $C_+(-\varepsilon,\theta)$.
\end{proof}

\subsection{Inverse Laplace transform}

In this section, we will consider a fixed neighborhood $V\subset \ms{N}(\mathbb{R}^+)$ and define the inverse Laplace transform $\mc{L}^{-1}$ of a function $h\in \ms{E}(V^-)$, or more precisely, of the corresponding hyperfunction $[h]\in H^1_{\mathbb{R}^+}(V,\ms{E})$.
We will not show here that $\mc{L}$ and $\mc{L}^{-1}$ are inverse to one another; the notation $\mc{L}^{-1}$ must be understood as a mere notation for now.

Consider a base point $-\varepsilon\in \mathbb{R}^-\cap V$ and a direction $v\in \mathbb{C}^*$ with $\mr{Arg}(v)\in ]0,\pi/2]$ such that the rays $R(-\varepsilon,v)$ and $R(-\varepsilon,\bar{v})$ are contained in $V$.
Introduce the path $\gamma_{\varepsilon,v} = R(-\varepsilon,v) \cup R(-\varepsilon, \bar{v})$, oriented downwards (towards the negative imaginary numbers).

\begin{figure}[H]
\centering
\includegraphics[scale=0.6]{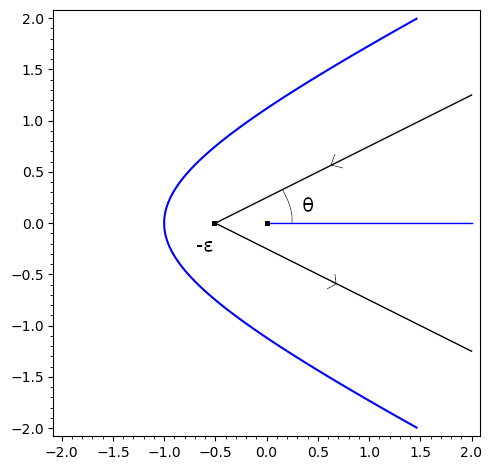}
\caption{integration path}
\end{figure}

\begin{df}
For any path $\gamma_{\varepsilon,v}$ as above, we define the inverse Laplace transform of a function $h\in \ms{E}(V^-)$ by the formula 
\[
\begin{aligned}
\mc{L}^{-1}_{\varepsilon,v}h(w) &:= \frac{1}{2i\pi} \int_{\gamma_{\varepsilon,v}}h(p)\mr{e}^{pw}dp\\
&= \frac{1}{2i\pi} \int_{t=0}^\infty h(-\varepsilon+t\bar{v}) \mr{e}^{-\varepsilon w}\mr{e}^{wt\bar{v}}\bar{v}dt - \frac{1}{2i\pi} \int_{t=0}^\infty h(-\varepsilon+t v) \mr{e}^{-\varepsilon w}\mr{e}^{wtv}vdt.
\end{aligned}
\]
\end{df}

We have an estimate $|h(p)|\leq C \mr{e}^{M\mr{Re}(p)}$ on the rays $R(-\varepsilon, v)$ and $R(-\varepsilon,\bar{v})$ by the definition of the class $\ms{E}(V^-)$.
It follows that the integral $\int_{R(-\varepsilon,v)} h(p)\mr{e}^{pw}dp$ converges when $\mr{Re}(wv)< - M\mr{Re}(v)$, that is, on the half-plane $-M - \mb{H}_{-\bar{v}}$.

The integral $\int_{R(-\varepsilon,\bar{v})} h(p)\mr{e}^{pw}dp$ converges on the symmetric half-plane $-M - \mb{H}_{-v}$, and the inverse Laplace transform is well-defined on the intersection of the two, that is, on the cone $C_-(-M,\pi/2 - \theta)$ where $\mr{e}^{i \theta} = v/|v|$.

Once again by Cauchy's theorem, the inverse Laplace transform does not depend on $\varepsilon$ nor $v$, and can be defined on the union of all the cones obtain before.
Note that the cones above can have openings arbitrarily close to $\pi/2$ by taking $v$ close to $1\in \mathbb{R}^+$, so that the inverse Laplace transform of $h$ is defined in a neighborhood of $-\infty$.

Note also that if $h$ belongs to the class $E(V)$, then we have $\mc{L}^{-1}h = 0$ by Cauchy's theorem.

We can then define the Laplace transform of a hyperfunction $[h]$ as the Laplace transform of any of its representatives :
\[
\mc{L}^{-1}[h] = \mc{L}^{-1}_{\varepsilon,v}h \in \mc{O}(H),
\]
where $H$ is a neighborhood of $-\infty$.

\begin{prop}
For any neighborhood $V\in \ms{N}(\mathbb{R}^+)$ and any function $h\in \ms{E}(V^-)$, the inverse Laplace transform of $h$ is a function with exponential growth in some neighborhood of $-\infty$.
\end{prop}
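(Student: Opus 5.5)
The plan is to estimate $\mc{L}^{-1}_{\varepsilon,v}h(w)$ directly on a cone $C_-(w_1,\frac{\pi}{2}-\theta')$ for each direction, and then take the union over directions.
Holomorphy comes for free. On compact subsets of each cone the integrals converge uniformly, so $\mc{L}^{-1}h$ is holomorphic there. By the Cauchy argument already given, the values for different $(\varepsilon,v)$ agree on overlaps.
The neighborhood will be $H=\bigcup_{\theta} C_-(w(\theta),\frac{\pi}{2}-\theta')$, with $w(\theta)$ pushed far enough left. The target is then a bound $|\mc{L}^{-1}h(w)|\leq C'\mr{e}^{M'\mr{Re}(w)}$ on $H$ for a single $M'$, which we can take to be $M'=\varepsilon$ for one fixed admissible $\varepsilon$.

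Fix $\varepsilon>0$ small so that $-\varepsilon\in V$. Take a direction $v=\mr{e}^{i\theta}$ with $\theta$ small, so that the rays lie in $V$. Since $h\in\ms{E}(V^-)$, on these rays we have $|h(p)|\leq C_\theta\mr{e}^{M_\theta\mr{Re}(p)}$.
For $p=-\varepsilon+tv$ we get $|\mr{e}^{pw}|=\mr{e}^{-\varepsilon\mr{Re}(w)}\mr{e}^{t\mr{Re}(wv)}$. The ray contribution is therefore bounded by
\[
\frac{C_\theta}{2\pi}\mr{e}^{-M_\theta\varepsilon}\,\mr{e}^{-\varepsilon\mr{Re}(w)}\int_0^\infty \mr{e}^{t(M_\theta\cos\theta+\mr{Re}(wv))}dt
=\frac{C_\theta\,\mr{e}^{-M_\theta\varepsilon}\,\mr{e}^{-\varepsilon\mr{Re}(w)}}{2\pi\,|M_\theta\cos\theta+\mr{Re}(wv)|}.
\]
This is valid as long as $\mr{Re}(wv)<-M_\theta\cos\theta$.
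On the smaller cone $w\in C_-(-2M_\theta-1,\frac{\pi}{2}-2\theta)$, the denominator is bounded below by a positive constant, say $\geq \sin\theta$. The other ray is handled symmetrically.
So on this cone $|\mc{L}^{-1}h(w)|\leq K_\theta\mr{e}^{-\varepsilon\mr{Re}(w)}$. This is exponential growth of order $-\varepsilon$ in the convention $E_M(H)$: the function grows at most like $\mr{e}^{\varepsilon|\mr{Re}(w)|}$ as $\mr{Re}(w)\to-\infty$.

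The main obstacle is uniformity, because $K_\theta$ and $M_\theta$ blow up as $\theta\to0$. My first idea was to shrink the cones for small $\theta$ by shifting their vertices further left, which would absorb the constants into a fixed exponential. I do not think this can work as stated.

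Here is why. Cauchy's theorem already makes $\mc{L}^{-1}h$ one single holomorphic function, independent of $(\varepsilon,v)$. On the part of $C_-(-2M_\theta-1,\frac{\pi}{2}-2\theta)$ lying to the right of any fixed vertical line, $\mr{Re}(w)$ is bounded. The bound $K_\theta\mr{e}^{-\varepsilon\mr{Re}(w)}$ is then essentially $K_\theta$ itself. Moving the vertex does not change the function's values on that part, so it cannot cancel $K_\theta$ there.

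The exponential weight only helps where $\mr{Re}(w)$ is very negative. So a genuinely uniform bound needs $K_\theta$ to grow slower than the $\mr{e}^{-\varepsilon\,\mr{Re}(w)}$-type gain available in the region where the $\theta$-cones are first needed. That is a growth hypothesis on $\theta\mapsto(C_\theta,M_\theta)$, which the mere class $\ms{E}(V^-)$ does not provide.

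I therefore expect the proof to yield only the weaker property. Each point of some neighborhood $H$ of $-\infty$ gets a bound of the form $K_\theta\mr{e}^{-\varepsilon\mr{Re}(w)}$, where $\theta$ is essentially the smallest angle whose cone contains the point. Equivalently, on every cone $C_-(w_0,\alpha)\subset H$ with $\alpha<\frac{\pi}{2}$, the estimate is uniform with exponent $-\varepsilon$. I would read the statement's "exponential growth" in this cone-by-cone sense. A single global constant on all of $H$ would require extra control of $h$, as in the classes $\ms{E}_\mu$, and goes beyond what $h\in\ms{E}(V^-)$ alone gives.
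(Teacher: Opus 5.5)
Your single-cone estimate is the same as the paper's, but the proposal then stops short of the statement, and the obstruction you describe does not exist. Membership in $E_M(H)$ requires one constant valid on all of $H$, so a cone-by-cone bound with constants $K_\theta$ does not prove the proposition.

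Your argument that moving the vertex cannot help misses the point. The statement asks only for some neighborhood of $-\infty$. By definition, such a neighborhood only has to contain, for each opening, one cone of that opening, with no constraint on where its vertex sits. When you push the vertex of the $\theta$-cone to the left, the points near the old vertex are simply dropped from $H$, so no bound is needed there. The vertices may run to $-\infty$ as fast as the constants require when $\theta\to 0$. This is why no growth hypothesis on $\theta\mapsto(C_\theta,M_\theta)$ is needed, and your claim that a global constant requires extra control of $h$ is false.

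The missing step is that pushing the vertex left improves the bound itself, through the denominator that you froze at a fixed lower bound. With $v=\mr{e}^{i\theta}$ and $w\in C_-(-M_\theta-r,\frac{\pi}{2}-\theta)$, one has $\mr{Re}(wv)+M_\theta\cos\theta\leq -r\cos\theta$, and symmetrically for $\bar v$. Your ray estimate then gives
\[
|\mc{L}^{-1}h(w)|\leq \frac{C_\theta}{\pi\, r\cos\theta}\,\mr{e}^{-\varepsilon M_\theta}\,\mr{e}^{-\varepsilon\mr{Re}(w)}\leq \frac{1}{\pi}\,\mr{e}^{-\varepsilon\mr{Re}(w)}
\]
as soon as $r\cos\theta\geq C_\theta$ (taking $M_\theta\geq 0$). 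This is exactly the paper's proof. Choosing $r=r_\theta$ this way for each $\theta$, the union $\bigcup_\theta C_-(-M_\theta-r_\theta,\frac{\pi}{2}-\theta)$ is a neighborhood of $-\infty$, and the single bound $\frac{1}{\pi}\mr{e}^{-\varepsilon\mr{Re}(w)}$ holds on all of it.

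Your own first idea also works if carried through, at the cost of a worse exponent. On a cone with vertex $x_\theta\leq\min\bigl(-2M_\theta-1,\,-\frac{1}{\varepsilon}\log\max(K_\theta,1)\bigr)$, every point satisfies $K_\theta\leq \mr{e}^{-\varepsilon\mr{Re}(w)}$. Your bound therefore becomes $\mr{e}^{-2\varepsilon\mr{Re}(w)}$ uniformly on the union of these cones.
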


\begin{proof}
We need to estimate the integral defining the transform.
Suppose that $|h(p)|\leq C\mr{e}^{M\mr{Re}(p)}$ for all $p$ outside $C_+(-\varepsilon,\theta)$, and take the direction $v$ of angle $\theta$.
Then
\[
\begin{aligned}
|\int_{t=0}^\infty h(-\varepsilon+tv)\mr{e}^{-\varepsilon w} \mr{e}^{wtv}vdt| &\leq C\mr{e}^{-\varepsilon \mr{Re}(w)-\varepsilon M} \int_{t=0}^\infty \mr{e}^{Mt\mr{Re}(v) + t\mr{Re}(wv)} dt\\
&\leq C\frac{\mr{e}^{-\varepsilon (M+\mr{Re}(w))}}{|\mr{Re}(wv)+M\mr{Re}(v)|}
\end{aligned}
\]
when $\mr{Re}(wv)+M\mr{Re}(v) < 0$.

Thus, for $w\in C_-(-M-r,\frac{\pi}{2}-\theta)$, we have $\mr{Re}(wv)+M\mr{Re}(v) < -r\mr{Re}(v)=-r\mr{cos}(\theta)$.
For $r>0$ big enough, we will have $C \leq |r \mr{cos}(\theta)|$, and
\[
|\mc{L}^{-1}_{\varepsilon,v}h(w)| \leq \frac{2C}{2\pi} \frac{\mr{e}^{-\varepsilon(M+\mr{Re}(w))}}{r\mr{cos}(\theta)}\leq \frac{1}{\pi} \mr{e}^{-\varepsilon \mr{Re}(w)},
\]
showing that $\mc{L}^{-1}h$ is of exponential growth of order $\varepsilon$ on the neighborhood of $-\infty$ obtained as the union $\cup_{\theta} C_-(-M-r,\theta)$ (where $M$ and $r$ depend on $\theta$).
\end{proof}

\begin{prop}
\label{prop_Lh}
If moreover $[h]\in \ms{H}_\mu(V)$ for some function $\mu$, if $V\subset C_+(a,\theta_0)$ for some $\theta_0<\pi/2$, then for each cone $\overline{C}_+(-\varepsilon,\theta)\subset V$, we can integrate on the border of this cone (that is, consider the transform $\mc{L}^{-1}_{\varepsilon,v}$ for $v=\mr{e}^{i \theta}$).
If $w_0 = -\mu(\theta)-1$ and $w\in \overline{C}_-(w_0,\frac{\pi}{2}-\theta)$, we have 
\[
|\mc{L}^{-1}_{\varepsilon,v}h(w)| \leq K(V) \| [h] \|_\mu
\]
for some constant $K(V)$ which only depends on the neighborhood $V$.

When $V= \mathbb{C}$, the constant $K$ can be taken to be $K(\mathbb{C})=2$: we see that $\mc{L}^{-1}h$ is a bounded holomorphic function on the neighborhood $H$ of $-\infty$ given by $H = \cup_{\theta< \theta_0} \overline{C}_-(-\mu(\theta)-1,\frac{\pi}{2}-\theta)$, $\mc{L}^{-1}h$ extends continuously to the border and $\| \mc{L}^{-1} h \|_H \leq 2 \| [h] \|_\mu$.
\end{prop}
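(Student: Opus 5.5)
The idea is to use the definition of $\ms{H}_\mu(V)$ to replace $h$ by $h-f$ for a well-chosen $f\in E(V)$. Since $\mc{L}^{-1}f=0$ by Cauchy's theorem, this does not change $\mc{L}^{-1}_{\varepsilon,v}h$. We then estimate the integral directly on the two rays $R(-\varepsilon,v)$ and $R(-\varepsilon,\bar v)$, with $v=\mr{e}^{i\theta}$, which form the border of $\overline{C}_+(-\varepsilon,\theta)$.

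Fix $w\in\overline{C}_-(w_0,\frac{\pi}{2}-\theta)$ and write $w=w_0+w_{01}$. Because $w_{01}$ lies in the closed left cone of half-opening $\frac{\pi}{2}-\theta$, it has the form $-r\mr{e}^{i\alpha}$ with $|\alpha|\le\frac{\pi}{2}-\theta$. Apply the definition of $\ms{H}_\mu(V)$ with this $\theta$, this $\varepsilon$ and the direction $u=w_{01}$. It gives $f\in E(V)$ with
\[
|h(p)-f(p)|\le \|[h]\|_\mu\,\frac{\mr{e}^{\mu(\theta)\mr{Re}(p)-\mr{Re}(w_{01}p)}}{N(p)}
\]
outside the cone. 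The rays lie in the closure of this region, and the bound extends to them by continuity; alternatively, one can use slightly smaller $\varepsilon,\theta$ and pass to the limit. On the rays we then get
\[
|(h-f)(p)\mr{e}^{pw}| \le \|[h]\|_\mu\,\frac{\mr{e}^{(\mu(\theta)+w_0)\mr{Re}(p)}}{N(p)}=\|[h]\|_\mu\frac{\mr{e}^{-\mr{Re}(p)}}{N(p)},
\]
since the $w_{01}$ terms cancel exactly. This cancellation is the reason for the choices $w_0=-\mu(\theta)-1$ and $u=w_{01}$.

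It remains to bound $\frac{1}{2\pi}\int_{\gamma}\frac{\mr{e}^{-\mr{Re}(p)}}{N(p)}|dp|$ by a constant depending only on $V$. We parametrize $p=-\varepsilon+tv$. For $\mr{Re}(p)>0$ we have $N(p)=t\sin\theta$, and for $\mr{Re}(p)\le 0$ we have $N(p)=|p|$. Here lies the main obstacle: near the vertex $p=-\varepsilon$, and near the crossing of the imaginary axis where $\mr{Re}(p)=0$, the weight $1/N(p)$ can be large or even nonintegrable, since $1/(t\sin\theta)$ is not integrable at $t=0$ when $\varepsilon\to0$. Moreover the factor $1/\sin\theta$ has to be absorbed into a constant that is uniform in $\theta$.

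My first attempt would use the hypothesis $V\subset C_+(a,\theta_0)$. It forces $\theta\le\theta_0<\pi/2$, and together with $\overline{C}_+(-\varepsilon,\theta)\subset V$ it keeps $\varepsilon$ bounded below, roughly $\varepsilon\ge -a$ when $a<0$. I would then split the ray at $\mr{Re}(p)=0$. On the compact part, $N(p)=|p|\ge \varepsilon\sin\theta$ and the length is $\varepsilon/\cos\theta$. On the unbounded part, $\int \mr{e}^{-t\cos\theta}/(t\sin\theta)\,dt$ is taken from $t=\varepsilon/\cos\theta$, so it is controlled by constants depending on $\varepsilon,\theta_0$. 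If the dependence on $\theta$ does not close up, I would instead enlarge the cone slightly to $\theta/2$, as in the proof of Proposition \ref{prop_Lg}, to gain the factor $\sin\theta$.

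For $V=\mathbb{C}$ we may take $\varepsilon$ freely, for instance large, or let it tend to a favorable value. A direct computation of the two ray integrals should then give the constant $K(\mathbb{C})=2$. Taking the union over $\theta$ gives the bound on $H$, and continuity up to the border follows from uniform convergence of the integrals on the closed cones.
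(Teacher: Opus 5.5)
\textbf{There is a genuine gap: the contour on which you estimate.} Your reduction is right and is the paper's: subtract the $f\in E(V)$ given by the definition of $\ms{H}_\mu(V)$ for the direction $w_{01}$, so that $|(h-f)(p)\mr{e}^{pw}|\le\|[h]\|_\mu\,\mr{e}^{-\mr{Re}(p)}/N(p)$. The trouble is that on the border rays $R(-\varepsilon,\mr{e}^{\pm i\theta})$ this majorant cannot give a constant independent of $\theta$. On the part of $R(-\varepsilon,\mr{e}^{i\theta})$ where $\mr{Re}(p)>0$ you have $N(p)=t\sin\theta$. The substitution $s=t\cos\theta$ turns the integral of the majorant into exactly
\[
\frac{1}{\sin\theta}\int_{\varepsilon}^{\infty}\frac{\mr{e}^{\varepsilon-s}}{s}\,ds .
\]
This is of order $\mr{log}(1/\varepsilon)/\sin\theta$ for small $\varepsilon$ and of order $1/(\varepsilon\sin\theta)$ for large $\varepsilon$. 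Keeping it bounded would need $\varepsilon$ at least of order $1/\sin\theta$. But then the piece near the vertex, where $\mr{e}^{-\mr{Re}(p)}\approx\mr{e}^{\varepsilon}$ and $N(p)=|p|\approx\varepsilon$, contributes roughly $\mr{e}^{\varepsilon}/\varepsilon$. The cones $\overline{C}_-(-\mu(\theta)-1,\frac{\pi}{2}-\theta)$ only exhaust a neighborhood of $-\infty$ as $\theta\to 0$, so this is exactly the regime that matters. Hence neither a uniform $K(V)$ nor $\|\mc{L}^{-1}h\|_H\le 2\|[h]\|_\mu$ follows from your estimate.

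Your proposed remedies do not fix this. The hypothesis $V\subset C_+(a,\theta_0)$ bounds $\theta$ from above, which controls $1/\cos\theta$, not $1/\sin\theta$. It also bounds $\varepsilon$ from above rather than below: the vertex $-\varepsilon$ must lie in $V$, which forces $\varepsilon<-a$. Rotating the rays to $\theta/2$ or $2\theta$ still leaves rays issued from a real point, along which $1/N(p)$ behaves like $1/(t\sin\phi)$ for the small angle $\phi$. Your claim that ``a direct computation'' yields $K(\mathbb{C})=2$ is therefore unsupported.

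\textbf{The missing idea.} Since $\mc{L}^{-1}_{\varepsilon,v}h(w)$ does not depend on the contour, estimate it on a contour that stays at a fixed distance from $\mathbb{R}^+$. The paper picks $r>0$ with the tube $\{d(p,\mathbb{R}^+)\le r\}\subset V$ and integrates along three pieces: the ray $ir+t\mr{e}^{i\theta}$, the left half-circle $\{|p|=r,\ \mr{Re}(p)\le 0\}$, and the ray $-ir+t\mr{e}^{-i\theta}$. This path stays at distance at least $r\cos\theta$ from $\overline{C}_+(0,\theta)$, so the majorant applies if you take the $\varepsilon$ of the definition small enough. On this path $N(p)\ge r$ everywhere. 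The half-circle contributes at most $\pi r\cdot\mr{e}^{r}/r=\pi\mr{e}^{r}$. Each ray contributes at most $\frac{1}{r}\int_0^\infty\mr{e}^{-t\cos\theta}dt=\frac{1}{r\cos\theta}\le\frac{1}{r\cos\theta_0}$. Hence
\[
K(V)=\frac{1}{2\pi}\Bigl(\pi\mr{e}^{r}+\frac{2}{r\cos\theta_0}\Bigr),
\]
which is uniform in $\theta$ and $\varepsilon$. The hypothesis $\theta_0<\pi/2$ enters only through this $1/\cos\theta$ factor. For $V=\mathbb{C}$, taking $r=1$ and $\cos\theta\ge 1/\sqrt{2}$ gives $\frac{\mr{e}}{2}+\frac{\sqrt{2}}{\pi}<2$.
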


\begin{proof}
For every $\theta < \theta_0$, we can consider $M=\mu(\theta)$; for every $w=w_0 + w_{01}$ with $|\mr{Arg}(-w_{01})|\leq \frac{\pi}{2}-\theta$ there exists another $\tilde{h}\in [h]$ such that for every $p\in V\setminus \overline{C}_+(0,\theta)$, we have 
\begin{equation}
\label{eq_majoration_gamma-}
\begin{aligned}
|\tilde{h}(p)\mr{e}^{pw}| &\leq C \frac{\mr{e}^{M\mr{Re}(p)-\mr{Re}(w_{01}p)}}{N(p)} \mr{e}^{\mr{Re}(pw)} \\
&\leq C \frac{\mr{e}^{(M+w_0)\mr{Re}(p)}}{N(p)}. 
\end{aligned}
\end{equation}

Denote by $T_r$ the tube $T_r = \{ p\in \mathbb{C} \:|\: d(p,\mathbb{R}^+)\leq r\}$ and suppose that $T_r\subset V$ for some $r>0$.
Consider the direction $v= \mr{e}^{i \theta}$ and the path $\gamma_r = \gamma_{r,i}\cup \gamma_{r,-}\cup \gamma_{r,-i}$ where $\gamma_{r,i}$ is parametrized by $\gamma_{r,i}(t) = ir + tv$ and oriented negatively, the path $\gamma_-$ is the left half-circle of center $0$ and radius $r$, and $\gamma_{r,-i}$ is parametrized by $\gamma_{r,-i}(t) = -ir + t \bar{v}$.
Take $\theta, M$, $w=w_0+w_{01}$ and $\tilde{h}$ as above, and suppose that $w_0=-M-1$ (so that $M+w_0=-1$).
By Cauchy's theorem, the inverse Laplace transform can also be computed along the path $\gamma_r$.

The integral of $\tilde{h}(p)\mr{e}^{pw}$ along the half-circle $\gamma_{r,-}$ can obviously be estimated with Equation \eqref{eq_majoration_gamma-} by
\[
\left| \int_{\gamma_{r,-}} \tilde{h}(p)\mr{e}^{pw} dp \right| \leq C\pi\mr{e}^{r}.
\]

Along the paths $\gamma_{r,\pm i}$ we have similar estimates :
\[
\begin{aligned}
\left|\int_{\gamma_{r,i}} \tilde{h}(p)\mr{e}^{pw} dp \right| &\leq C \left|\int_{\gamma_{r,i}} \frac{\mr{e}^{-\mr{Re}(p)}}{N(p)}dp \right|\\
&\leq C \int_{t=0}^\infty \frac{\mr{e}^{-t\mr{Re}(v)}}{r}\\
&\leq \frac{C}{r\mr{cos}(\theta)}\\
&\leq \frac{C\sqrt{2}}{r}.
\end{aligned}
\]
We deduce that 
\[
|\mc{L}^{-1}h(w)| = |\mc{L}^{-1}\tilde{h}(w)| \leq \frac{C}{2\pi} (\pi\mr{e}^r + 2\sqrt{2} r^{-1}).
\]
In the case where $V=\mathbb{C}$, we can take $r=1$ to obtain 
\[
|\mc{L}^{-1}h(w)| \leq 2C.
\]
These inequalities are valid for $w \in \overline{C}_-(w_0-1, \frac{\pi}{2}-\theta)$, so when $\theta$ varies from $\theta_0$ to $0$, these sets cover the union $H = \cup_{\theta} \overline{C}_-(-\mu(\theta)-1,\frac{\pi}{2}-\theta)$ and the estimate for $\mc{L}^{-1}h$ is valid on the whole of $H$.

Taking $w_0$ such that $w_0+M = -1+\varepsilon$, the same computations show that $\mc{L}^{-1}h$ in fact extends holomorphically to a bigger open set $H'\supset \overline{H}$ ; in particular, $\mc{L}^{-1}h$ extends continuously to the border of $H$.
\end{proof}

\subsection{Properties of Laplace transforms}

The following properties of Laplace transforms are quite classical; since our context is slightly different, we state here those that we will need.
Letters $H$ and $V$ will denote respectively a neighborhood of $-\infty$ and a neighborhood of $\mathbb{R}^+$.

The first two lemmas are quite obvious.

\begin{lem}
\label{lem_exp_translation}
For every $a>0$ and every $g\in E_0(H)$, the function $\tilde{g}(w) = \mr{e}^{aw}g(w)$ is also bounded, and has Laplace transform $\mc{L}_{w_0}\tilde{g}(p) = \mc{L}_{w_0}g(p-a)$.
\end{lem}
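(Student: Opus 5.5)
The plan is to check the two claims of the lemma separately: first boundedness of $\tilde{g}$, then the formula for its Laplace transform.

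For boundedness, note that every neighborhood $H\in\ms{N}(-\infty)$ considered here is a union of left cones $C_-(w,\theta(w))$ with $w\leq a$, where $a=\sup(H\cap\mathbb{R})$. Hence $\mr{Re}(w)\leq a$ on $H$. Since $a>0$ in the statement is the exponent, I rename the bound $b$, so $\mr{Re}(w)\leq b$ on $H$. Then
\[
|\tilde{g}(w)| \leq \|g\|_H\, \mr{e}^{a\,\mr{Re}(w)} \leq \|g\|_H\, \mr{e}^{ab}
\]
on $H$. The function $\tilde{g}$ is holomorphic on $H$, and it extends continuously to the border because both factors do. So $\tilde{g}\in E_0(H)$. (In fact $\tilde{g}\in E_a(H)$, but we only need boundedness.)

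For the transform, I compute along any broken path $\Gamma_{w_0,w_1,v}\subset\overline{H}$ with $\mr{Re}(v)<0$:
\[
\mc{L}_{w_0,w_1,v}\tilde{g}(p) = \int_{\Gamma_{w_0,w_1,v}} g(w)\,\mr{e}^{aw}\mr{e}^{-pw}\,dw = \int_{\Gamma_{w_0,w_1,v}} g(w)\,\mr{e}^{-(p-a)w}\,dw = \mc{L}_{w_0,w_1,v}g(p-a).
\]
This identity is valid whenever both integrals converge, i.e. for $p-a\in\mb{H}_{\bar v}$. On that half-plane the ray integral for $g$ converges absolutely, and the integrand is identical, so the ray integral for $\tilde{g}$ converges too.

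The only subtle point is the domain. $\mc{L}_{w_0}g(p-a)$ is defined for $p-a\in\mathbb{C}\setminus\mathbb{R}^+$, i.e. $p\notin[a,\infty[$. Meanwhile $\mc{L}_{w_0}\tilde g$ is a priori defined on $\mathbb{C}\setminus\mathbb{R}^+$, which is a larger set.
\begin{itemize}
\item For every $v$, the half-plane $a+\mb{H}_{\bar v}$ is contained in the domain of convergence for $\tilde{g}$: there $\mr{Re}(pv)>a\,\mr{Re}(v)$, with the extra factor $\mr{e}^{a\mr{Re}(tv)}$ decaying.
\item So the two functions agree on the union over $\mr{Re}(v)<0$ of these half-planes, which is $\mathbb{C}\setminus[a,\infty[$.
\item In particular they agree on the common connected domain $\mathbb{C}\setminus\mathbb{R}^+$, which is enough.
\end{itemize}
Alternatively, equality on an open set extends by analytic continuation. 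Independence of $w_1$ and $v$ was already established, so no further work is needed. I expect no real obstacle; the only care needed is bookkeeping of the domains of convergence described above.
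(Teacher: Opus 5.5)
Your proof is correct and is essentially the paper's approach: the paper gives no proof, calling the lemma ``quite obvious'', and the intended argument is exactly your substitution $\mathrm{e}^{aw}\mathrm{e}^{-pw}=\mathrm{e}^{-(p-a)w}$ along a common broken path, with boundedness coming from $\mathrm{Re}(w)\le\sup(H\cap\mathbb{R})$. One small slip: $\mathbb{C}\setminus\mathbb{R}^+$ is the \textit{smaller} of the two domains, since $\mathbb{C}\setminus[a,\infty[\,\supset\mathbb{C}\setminus\mathbb{R}^+$ (your bullets in fact show that $\mathcal{L}_{w_0}\tilde g$ continues to $\mathbb{C}\setminus[a,\infty[$); this does not affect the conclusion, because the identity is only needed on the intersection $\mathbb{C}\setminus\mathbb{R}^+$.
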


\begin{lem}
\label{lem_translation_exp}
For every $a>0$ and every $h\in \ms{E}(V^-)$, the function $\tilde{h}(p) = h(p-a)$ is also in the class $\ms{E}(V^-)$, and its inverse Laplace transform is $\mc{L}^{-1}\tilde{h}(w) = \mr{e}^{-aw}\mc{L}^{-1}h(w)$.
\end{lem}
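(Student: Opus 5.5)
The plan is a direct change of variables in the integral defining $\mc{L}^{-1}$, done in two steps: first check that $\tilde{h}$ lies in $\ms{E}(V^-)$, then compute its inverse Laplace transform. One point needs care from the start. The substitution itself produces a factor $\mr{e}^{\pm aw}$, and the sign depends only on which way the singular support of $h$ is moved. The factor $\mr{e}^{-aw}$ in the statement corresponds to $\tilde{h}$ having its singular set $\mathbb{R}^+$ carried onto $-a+\mathbb{R}^+$, that is, $\tilde{h}(p)=h(q)$ with $q=p+a$. This is the reading under which $\tilde{h}$ is naturally holomorphic on $V^-$ for $a>0$ and the stated formula holds. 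For the literal shift $h(p-a)$ the same computation gives the factor $\mr{e}^{+aw}$, and $\tilde{h}$ is only defined on $a+V^-$.

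\textbf{Step 1: $\tilde{h}\in\ms{E}(V^-)$.} Take a cone $\overline{C}_+(-\varepsilon,\theta)\subset V$. The translate $\overline{C}_+(-\varepsilon+a,\theta)$ lies inside it, so the complement of the smaller cone is contained in the complement of the translated one. On the complement of $\overline{C}_+(-\varepsilon,\theta)$ the growth hypothesis on $h$ gives
\[
|\tilde{h}(p)|=|h(p+a)|\leq C\mr{e}^{M\mr{Re}(p)+Ma}=C'\mr{e}^{M\mr{Re}(p)} .
\]
Hence $\tilde{h}$ has exponential growth along rays with the same order $M$ and a new constant $C'=C\mr{e}^{Ma}$.

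\textbf{Step 2: computing $\mc{L}^{-1}\tilde{h}$.} Choose a path $\gamma_{\varepsilon,v}$ admissible for $\tilde{h}$ and apply the definition of $\mc{L}^{-1}$. Substituting $q=p+a$, the path $\gamma_{\varepsilon,v}$ becomes $a+\gamma_{\varepsilon,v}$ and $\mr{e}^{pw}=\mr{e}^{-aw}\mr{e}^{qw}$, so
\[
\mc{L}^{-1}\tilde{h}(w)=\frac{\mr{e}^{-aw}}{2i\pi}\int_{a+\gamma_{\varepsilon,v}}h(q)\mr{e}^{qw}dq .
\]
The translated path $a+\gamma_{\varepsilon,v}$ is the path $\gamma_{\varepsilon-a,v}$ based at $-\varepsilon+a$, and it surrounds $\mathbb{R}^+$ as required.

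\textbf{Step 3: identifying the integral.} By the Cauchy-theorem argument already used to show that $\mc{L}^{-1}$ does not depend on $\varepsilon$ or $v$, this integral equals $2i\pi\,\mc{L}^{-1}h(w)$ on the common cone of convergence. This yields $\mc{L}^{-1}\tilde{h}(w)=\mr{e}^{-aw}\mc{L}^{-1}h(w)$, first on that cone and then on the whole neighborhood of $-\infty$ by analytic continuation.

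\textbf{Main obstacle.} The only delicate point is checking that the translated path still lies in the region where the integral converges, i.e. in $V^-$ with the rays in the half-planes where the exponential estimates apply. Translation by $a$ changes the growth bounds only by the constant factor $\mr{e}^{Ma}$ and leaves the directions $v,\bar v$ unchanged. So the half-planes of convergence $-M-\mb{H}_{-\bar v}$ and $-M-\mb{H}_{-v}$ are the same for both integrals, and the identity holds on their intersection.
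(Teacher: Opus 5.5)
\textbf{Verdict: genuine error.} You correctly noticed that the printed shift and the printed exponent do not match. For $\tilde h(p)=h(p-a)$, the substitution $q=p-a$ gives $\mr{e}^{pw}=\mr{e}^{aw}\mr{e}^{qw}$, hence $\mc{L}^{-1}\tilde h=\mr{e}^{aw}\mc{L}^{-1}h$. (The paper gives no proof, calling the lemma obvious.) But you repaired the statement in the wrong direction: the typo is the sign of the exponent, not the direction of the shift. Lemma \ref{lem_exp_translation} pairs multiplication by $\mr{e}^{aw}$ with $p\mapsto p-a$. Likewise, in the proof of Theorem \ref{thm_1} the auxiliary function must be $\mr{e}^{2w}g$, since it has to decay as $\mr{Re}(w)\to-\infty$; then $\mc{L}(\mr{e}^{2w}g)(p)=\mc{L}g(p-2)$ and $\mc{L}^{-1}\mc{L}(\mr{e}^{2w}g)=\mr{e}^{2w}\mc{L}^{-1}\mc{L}g$.

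Your reading $\tilde h(p)=h(p+a)$ moves the singular set to $-a+\mathbb{R}^+=[-a,+\infty[$, which meets $V^-$ along $V\cap[-a,0[$. So, contrary to your claim, $\tilde h$ is not holomorphic on $V^-$, and the first assertion of the lemma becomes false. Concretely, take $V=\mathbb{C}$, $w_0<0$ and $0\leq\beta<a$. The function $h=\mc{L}_{w_0}(\mr{e}^{\beta w})=\mr{e}^{(\beta-p)w_0}/(p-\beta)$ lies in $\ms{E}(\mathbb{C}\setminus\mathbb{R}^+)$, but $\tilde h$ has a pole at $\beta-a\in V^-$. Then $\frac{1}{2i\pi}\int_{\gamma_{\varepsilon,v}}\tilde h(p)\mr{e}^{pw}dp$ equals $0$ if $\varepsilon<a-\beta$ and $\mr{e}^{(\beta-a)w}$ if $\varepsilon>a-\beta$, so the transform is not even independent of the path.

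The error shows up in both of your steps:
\begin{itemize}
\item In Step 1, the inclusion $\overline{C}_+(-\varepsilon+a,\theta)\subset\overline{C}_+(-\varepsilon,\theta)$ is what controls $h(p-a)$, not $h(p+a)$. For the latter you would need $p\notin\overline{C}_+(-\varepsilon-a,\theta)$, which is a larger cone. Indeed, with $V=\mathbb{C}$ and $a>2\varepsilon$, the point $p=-\varepsilon-a/2$ lies outside $\overline{C}_+(-\varepsilon,\theta)$, while $p+a\in\mathbb{R}^+$ is outside the domain of $h$.
\item In Step 2, the path $a+\gamma_{\varepsilon,v}$ is based at $a-\varepsilon$, which lies on $\mathbb{R}^+$ as soon as $\varepsilon\leq a$, so it does not surround $\mathbb{R}^+$. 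Only the paths with $\varepsilon>a$, which surround $[-a,+\infty[$, give your formula. This again shows that $\tilde h$ is a hyperfunction supported on $[-a,+\infty[$, not an element of $\ms{E}(V^-)$.
\end{itemize}

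If you run the same three steps on the literal shift, everything works:
\begin{itemize}
\item $p\notin\overline{C}_+(-\varepsilon,\theta)$ implies $p-a\notin\overline{C}_+(-\varepsilon,\theta)$, so $|h(p-a)|\leq C\mr{e}^{-Ma}\mr{e}^{M\mr{Re}(p)}$.
\item The substitution $q=p-a$ sends $\gamma_{\varepsilon,v}$ to $\gamma_{\varepsilon+a,v}$, based at $-\varepsilon-a<0$.
\item The Cauchy argument then gives $\mc{L}^{-1}\tilde h=\mr{e}^{aw}\mc{L}^{-1}h$.
\end{itemize}
The only caveat is the domain. $h(p-a)$ is defined on $a+V^-$, which contains $V^-$ when $V=\mathbb{C}$ (the case used in Theorem \ref{thm_1}); for a proper cone $V$ the domain has to be adjusted.
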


Derivatives and Laplace transform behave as expected (modulo a entire function with exponential growth).

\begin{lem}
For every $g\in E_0(H)$ with exponential decay, the function $\tilde{g}(w) = wg(w)$ is bounded in some neighborhood $H'\subset H$, and we have $(\mc{L}_{w_0}g)'(p) = -\mc{L}_{w_0}\tilde{g}(p)$.
\end{lem}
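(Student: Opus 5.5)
The plan is to differentiate under the integral sign along a broken path, and to control the factor $w$ using the exponential decay of $g$.

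\textbf{Boundedness of $\tilde{g}$.} The hypothesis is that $g\in E_0(H)$ has exponential decay, i.e.\ $|g(w)|\leq C\mr{e}^{M\mr{Re}(w)}$ on $H$ for some $M>0$. Since $H$ is a union of cones $C_-(w_1,\theta(w_1))$ opening towards $-\infty$, I restrict to a neighborhood $H'\subset H$ (itself a union of such cones) on which $|\mr{Im}(w)|$ is controlled by a multiple of $|\mr{Re}(w)|$ plus a constant. For instance, take for each $\theta<\pi/2$ the cone $\overline{C}_-(\Theta_H^{-1}(\theta)-1,\theta)$. On each such cone $|w|\leq A_\theta+B_\theta|\mr{Re}(w)|$, so
\[
|w g(w)|\leq C(A_\theta+B_\theta|\mr{Re}(w)|)\mr{e}^{M\mr{Re}(w)},
\]
which is bounded as $\mr{Re}(w)\to-\infty$.

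The main obstacle is uniformity in $\theta$, since $B_\theta\sim 1/\cos\theta$ blows up. I would handle it by noting that $|w|\leq |\mr{Re}(w)|+|\mr{Im}(w)|$ and $|w|\mr{e}^{M\mr{Re}(w)}$ is bounded on any region where $|\mr{Im}(w)|\leq \mr{e}^{-M\mr{Re}(w)/2}$. So I take $H'=H\cap\{|\mr{Im}(w)|\leq \mr{e}^{-M\mr{Re}(w)/2}\}$, or more precisely the union of maximal cones contained in it. This set is still a neighborhood of $-\infty$ in the sense of the definition, because every cone of opening $<\pi/2$ with vertex far enough left stays inside the exponential region. On $H'$ we then get $|wg(w)|\leq C\sup_{x\leq 0}(|x|+\mr{e}^{-Mx/2})\mr{e}^{Mx}<\infty$. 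Continuity up to the border of $H'$ is inherited from $g$.

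\textbf{Derivative of the Laplace transform.} Fix $p\in\mathbb{C}\setminus\mathbb{R}^+$ and a broken path $\Gamma_{w_0,w_1,v}\subset\overline{H'}$ with $\mr{Re}(pv)>0$. For $p'$ in a small disk around $p$, the integrand $g(w)\mr{e}^{-p'w}$ together with its $p'$-derivative $-wg(w)\mr{e}^{-p'w}$ is dominated on the ray by $C'(1+t)\mr{e}^{-\delta t}$ for some $\delta>0$. Hence differentiation under the integral is legitimate, and
\[
(\mc{L}_{w_0}g)'(p)=-\int_{\Gamma}wg(w)\mr{e}^{-pw}dw=-\mc{L}_{w_0}\tilde g(p).
\]
Here $w_0$ is chosen in $H'$, and by the independence of $\mc{L}_{w_0}$ from $v$ and $w_1$ established before Proposition~\ref{prop_Lg}, this identity holds on all of $\mathbb{C}\setminus\mathbb{R}^+$. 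If $w_0$ is only in $H$, the difference $\mc{L}_{w_0,w_0'}$ is entire and its derivative is $-\mc{L}_{w_0,w_0'}\tilde g$ by the same computation on a compact segment. So the equality holds for any base point, and in any case holds as hyperfunctions.
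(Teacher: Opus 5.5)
Your proof is correct and follows essentially the paper's route: first shrink $H$ to a neighborhood of $-\infty$ on which $|w|\mr{e}^{M\mr{Re}(w)}$ stays bounded, then differentiate under the integral sign. The paper takes the sublevel set $\{\mr{log}|w|+M\mr{Re}(w)<0\}$, which is a logarithmic neighborhood of the same type as your $\{|\mr{Im}(w)|\le \mr{e}^{-M\mr{Re}(w)/2}\}$, and it treats the differentiation step as "the classical computation", which you carry out explicitly with a domination bound.
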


\begin{proof}
We just need to check that $w\mr{e}^{-aw}$ is bounded in a neighborhood $H'$ when $a>0$ : the actual computation is the classical one.
Writing $w=x+iy$, we have 
\[
\mr{log} |w\mr{e}^{-aw}|^2 = \mr{log}(x^2+y^2) - 2a x.
\]
The set $H'$ where this quantity is negative is a neighborhood of $-\infty$.
\end{proof}

\begin{lem}
For every $g\in E_0(H)$, the derivative $g'$ is bounded on $-1+H$ and satisfies $\mc{L}_{w_0} g'(p) = -g(w_0)\mr{e}^{-pw_0} + p\mc{L}_{w_0}(p)$ for any $w_0\in -1+H$.
In particular, $\mc{L} g' = p\mc{L}g$ modulo an entire function with exponential growth.
\end{lem}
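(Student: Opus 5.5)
The statement has two parts: the boundedness of $g'$ on $-1+H$ via Cauchy estimates, and the Laplace identity via integration by parts along a broken path, whose boundary term at infinity must be shown to vanish.

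\textbf{Bound on $g'$.} Let $w\in -1+H$, so $w=-1+u$ with $u\in H$. Since $H=\cup_{w'} C_-(w',\theta(w'))$ and $u$ lies in some cone $C_-(w',\theta)$, the point $w$ lies in the translated cone $C_-(w'-1,\theta)$. Because $\theta(w')\to\pi/2$, I want the closed unit disk around $w$ to be contained in $\overline{H}$. The clean case is when the relevant cones have opening at least $\pi/4$, which holds for $w'$ far enough to the left. Otherwise I shrink the radius: the distance from $w$ to the border of $C_-(w',\theta)$ is at least $\sin\theta$. Then Cauchy's estimate gives $|g'(w)|\le \|g\|_H/\delta$, where $\delta$ is the radius of a disk around $w$ contained in $\overline{H}$. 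Since $\delta$ is bounded below by a constant depending only on $H$ (in the worst case $\sin\theta$ for the fixed smallest opening used), $g'$ is bounded on $-1+H$. If uniformity fails for small openings, I will instead note that every $w\in -1+H$ is at distance at least $\min(1,\cdot)$ from $\partial H$ by convexity of $\rho$, since $\partial(-1+H)$ is the horizontal translate of $\partial H$ by $-1$. Continuity to the border of $-1+H$ is automatic, because that border lies in the interior of $H$.

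\textbf{Laplace identity.} Fix $p\in\mathbb{C}\setminus\mathbb{R}^+$ and choose a direction $v$ with $\mathrm{Re}(pv)>0$. Choose $w_1$ such that the ray $R(w_1,v)$ and the segment $[w_0,w_1]$ lie in $-1+H$; this is possible since $-1+H$ is again a neighborhood of $-\infty$. On $\Gamma_{w_0,w_1,v}$, integrate by parts:
\[
\int_\Gamma g'(w)\mathrm{e}^{-pw}dw=\Big[g(w)\mathrm{e}^{-pw}\Big]_{w_0}^{\infty}+p\int_\Gamma g(w)\mathrm{e}^{-pw}dw .
\]
At infinity along the ray, $|g(w_1+tv)\mathrm{e}^{-p(w_1+tv)}|\le \|g\|_H\,\mathrm{e}^{-\mathrm{Re}(pw_1)}\mathrm{e}^{-t\,\mathrm{Re}(pv)}\to0$, so the boundary term there vanishes. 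The term at $w_0$ is $-g(w_0)\mathrm{e}^{-pw_0}$. This gives $\mathcal{L}_{w_0}g'(p)=-g(w_0)\mathrm{e}^{-pw_0}+p\,\mathcal{L}_{w_0}g(p)$. Since both sides are independent of $v$ and $w_1$, as established earlier, the identity holds on all of $\mathbb{C}\setminus\mathbb{R}^+$.

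Finally, $-g(w_0)\mathrm{e}^{-pw_0}$ is entire with exponential growth, because $|\mathrm{e}^{-pw_0}|=\mathrm{e}^{-w_0\mathrm{Re}(p)}$ when $w_0$ is real; for complex $w_0$ one first changes the base point, which by the earlier remark only modifies things by an element of $E(\mathbb{C})$. Hence $\mathcal{L}g'=p\,\mathcal{L}g$ in $H^1_{\mathbb{R}^+}(\mathbb{C},\mathscr{E})$. The main obstacle is the uniform radius in the Cauchy estimate; everything else is routine.
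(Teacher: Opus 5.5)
Your proof is correct and takes the same route as the paper: a Cauchy estimate gives boundedness of $g'$, and a single integration by parts along a ray gives the identity, with the boundary term at infinity vanishing because $g$ is bounded. The only cosmetic difference is that you integrate along a general broken path $\Gamma_{w_0,w_1,v}$ for every $p\in\mathbb{C}\setminus\mathbb{R}^+$ at once, whereas the paper uses the horizontal ray $w_0-t$ for $\mathrm{Re}(p)<0$ and then concludes by analytic continuation.

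For the final clause, simply take $w_0$ real, as you do first. For non-real $w_0$ the term $\mathrm{e}^{-pw_0}$ grows like $\mathrm{e}^{\mathrm{Im}(p)\mathrm{Im}(w_0)}$, so it is not in $E(\mathbb{C})$ under the paper's definition $|h(p)|\le C\mathrm{e}^{M\mathrm{Re}(p)}$. Your detour through a change of base point is therefore unnecessary and does not actually fix that.
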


\begin{proof}
The fact that $g'$ be bounded is a consequence of Cauchy's formula for the derivative.
For $\mr{Re}(p)<0$, we can define the transform using the direction $v=-1$ and integrate by parts:
\[
\begin{aligned}
\mc{L}_{w_0}g'(p) &= \int_{\Gamma_{w_0,w_0,-1}} g'(w)\mr{e}^{-pw}dw\\
&= \int_{t=0}^{+\infty} g'(w_0-t)\mr{e}^{-pw_0+pt} (-dt)\\
&= -[-g(w_0-t)\mr{e}^{-pw_0+pt}]_0^{+\infty} -\int_{t=0}^{+\infty} (-g(w_0-t))p\mr{e}^{-pw_0+pt}(-dt)\\
&= -g(w_0)\mr{e}^{-pw_0} + p\mc{L}_{w_0}g(p).
\end{aligned}
\]
We conculde by analytic continuation.
\end{proof}

If $g\in E_0(H)$ has exponential decay (i.e. $g\in \cup_{a<0}E_a(H)$), then we can define $Ig\in E_0(H)$ to be its primitive which satisfies $\underset{\mr{Re}(w)\to -\infty}{\mathrm{lim}}\:Ig(w) = 0$.

\begin{lem}
For every $g\in E_0(H)$ with exponential decay, its primitive $Ig$ is bounded and satisfies $\mc{L}_{w_0} Ig (p) = \frac{\mc{L}_{w_0}g(p)}{p}$.
\end{lem}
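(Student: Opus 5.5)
The plan mirrors the proof of the previous lemma on $g'$, with the roles of $g$ and its primitive exchanged. I would establish the bounds first, then derive the formula by one integration by parts on a half-plane of $p$, then extend by analytic continuation. The one delicate point is a border term at $w_0$, which I address explicitly in the last paragraph.

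\emph{Step 1: boundedness of $Ig$.} Suppose $g\in E_a(H)$ with $a>0$, so $|g(w)|\leq C\mr{e}^{a\mr{Re}(w)}$. For $w\in H$ I would integrate $g$ along the horizontal ray $R(w,-1)$, which stays in $H$ because $H$ is a union of left cones. This gives
\[
Ig(w)=\int_{0}^{\infty} g(w-t)\,dt, \qquad |Ig(w)|\leq \frac{C}{a}\,\mr{e}^{a\mr{Re}(w)}.
\]
So $Ig\in E_a(H)\subset E_0(H)$. Continuity up to the border follows from the same uniform bound.

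\emph{Step 2: integration by parts for $\mr{Re}(p)<0$.} Here I would use the direction $v=-1$, exactly as in the proof for $g'$. Applying that computation to $Ig$, whose derivative is $g$, gives
\[
\mc{L}_{w_0}g(p)=\int_{0}^{\infty} g(w_0-t)\,\mr{e}^{-pw_0+pt}\,(-dt)= -Ig(w_0)\mr{e}^{-pw_0}+p\,\mc{L}_{w_0}Ig(p).
\]
The boundary term at $t=\infty$ vanishes because $Ig(w_0-t)\to 0$, which is the defining normalisation of $Ig$. Dividing by $p$ yields
\[
\mc{L}_{w_0}Ig(p)=\frac{\mc{L}_{w_0}g(p)}{p}+Ig(w_0)\frac{\mr{e}^{-pw_0}}{p}.
\]
Both sides are holomorphic on $\mathbb{C}\setminus\mathbb{R}^+$. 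By Proposition \ref{prop_Lg} and the independence of the direction $v$, the identity therefore extends from $\{\mr{Re}(p)<0\}$ to all of $\mathbb{C}\setminus\mathbb{R}^+$.

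\emph{Step 3: the border term, which is the main obstacle.} The statement as worded requires the extra term $Ig(w_0)\mr{e}^{-pw_0}/p$ to vanish.

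\begin{itemize}
\item It vanishes exactly when the base point satisfies $Ig(w_0)=0$, and the identity then holds literally. The exponential decay of $g$ is what makes the normalisation of $Ig$ at $-\infty$ possible, and hence what makes this choice available.
\item For a general base point, I would show that the extra term is harmless. The pole of $\mr{e}^{-pw_0}/p$ at $p=0$ cancels against $\mc{L}_{w_0}g(p)/p$, because $\mc{L}_{w_0}g(0)=\int_{-\infty}^{w_0}g=-Ig(w_0)$ with the orientation of $\Gamma$. The combined remainder $Ig(w_0)(\mr{e}^{-pw_0}-1)/p$ is entire with exponential growth, so the equality holds in $H^1_{\mathbb{R}^+}(\mathbb{C},\ms{E})$.
\end{itemize}

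I expect the checks here to be routine. What needs care is verifying this cancellation at $p=0$ with the correct signs, and making sure the formula $\mc{L}_{w_0}Ig=\mc{L}_{w_0}g/p$ is read with the base point normalised so that the border term is absent.
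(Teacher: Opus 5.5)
Your Steps 1 and 2 are correct, and the identity you reach,
\[
\mc{L}_{w_0}Ig(p)=\frac{\mc{L}_{w_0}g(p)}{p}+Ig(w_0)\,\frac{\mr{e}^{-pw_0}}{p},
\]
is the right one. The paper's proof drops the border term only because of an invalid interchange of integrals. In its double integral, for $z$ fixed on $\Gamma_{w_0,w_0,-1}$, the variable $w$ ranges over the segment $[w_0,z]$, not over $\Gamma_{z,w_0,-1}$. Doing the swap correctly produces exactly your extra term.

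The example $g(w)=\mr{e}^{w}$ settles the matter. Here $Ig=g$, so $\mc{L}_{w_0}Ig(p)=\mr{e}^{(1-p)w_0}/(p-1)$, whereas $\mc{L}_{w_0}g(p)/p=\mr{e}^{(1-p)w_0}/\bigl(p(p-1)\bigr)$. So the statement as printed is false for every base point, and no argument can establish it.

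The gap is in Step 3, where you nevertheless try to recover the statement; both bullets fail.

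In the first bullet, a base point with $Ig(w_0)=0$ need not exist. In the example, $Ig=\mr{e}^{w}$ has no zeros, and the normalisation of $Ig$ at $-\infty$ does not provide a base point for $\mc{L}_{w_0}$.

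In the second bullet, the difference $\mc{L}_{w_0}Ig-\mc{L}_{w_0}g/p$ is $Ig(w_0)\mr{e}^{-pw_0}/p$, not $Ig(w_0)(\mr{e}^{-pw_0}-1)/p$: you silently moved $Ig(w_0)/p$ to the other side. Since $0\in\mathbb{R}^+$, the class $[1/p]=\mc{L}1$ (a Dirac mass at $0$) is a nonzero element of $H^1_{\mathbb{R}^+}(\mathbb{C},\ms{E})$. The pole cancellation you found actually shows the opposite of what you want. $\mc{L}_{w_0}Ig$ is regular at $p=0$, while $\mc{L}_{w_0}g/p$ has a pole there with residue $\mc{L}_{w_0}g(0)=-Ig(w_0)$, so the two cannot agree modulo $E(\mathbb{C})$. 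In the example, $[\mc{L}Ig]=[1/(p-1)]$ while $[\mc{L}_{w_0}g/p]=[1/(p-1)]-\mr{e}^{w_0}[1/p]$.

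What your computation does prove, and what Step 3 should state instead, is one of two things:
\begin{itemize}
\item the exact identity $\mc{L}_{w_0}G=\mc{L}_{w_0}g/p$ for the primitive $G=Ig-Ig(w_0)$ normalised at the base point;
\item or, at the level of hyperfunctions, $[\mc{L}Ig]=\bigl[(\mc{L}_{w_0}g(p)-\mc{L}_{w_0}g(0))/p\bigr]$.
\end{itemize}
In other words, the stated formula holds only modulo $E(\mathbb{C})+\mathbb{C}\cdot\frac{1}{p}$, which is the natural ambiguity in dividing a hyperfunction by $p$.
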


\begin{proof}
We can write
\[
Ig(w) = - \int_{\Gamma_{w,w_0,-1}} g(z)dz
\]
for any point $w_0\in H$ (we can take $w_0\in \mathbb{R}^-$ to simplify), and for $\mr{Re}(p)<0$, we can compute $\mc{L}Ig(p)$ along the broken path $\Gamma_{w_0,w_0,-1}$.
We obtain
\[
\begin{aligned}
\mc{L}_{w_0} Ig(p) &= -\int_{w\in \Gamma_{w_0,w_0,-1}}\int_{z\in \Gamma_{w,w_0,-1}} g(z)\mr{e}^{-pw}dzdw\\
&= -\int_{z\in \Gamma_{w_0,w_0,-1}}\int_{w\in \Gamma{z,w_0,-1}} g(z)\mr{e}^{-pw}dwdz\\
&= \int_{z\in \Gamma_{w_0,w_0,-1}} g(z) \frac{\mr{e}^{-pz}}{p}dz
\end{aligned}
\]
The result follows for $\mr{Re}(p)<0$ ; we conclude by analytic continuation.
\end{proof}

\begin{lem}
\label{lem_ILg}
Suppose $H\subset \{\mr{Re}(w)\leq -1\}$.
For every $g\in E_0(H)$, the transform $\mc{L}g(p)$ has exponential decay when $\mr{Re}(p) \to -\infty$, and $I\mc{L}_{w_0}g = - \mc{L}_{w_0} \left( \frac{g(w)}{w} \right)$.
\end{lem}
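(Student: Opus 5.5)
The plan is to argue directly on the integral defining $\mc{L}_{w_0}g$, in the half-plane $\mr{Re}(p)<0$, where the ray of direction $v=-1$ can be used. Then extend to all of $\mathbb{C}\setminus\mathbb{R}^+$ by analytic continuation, as in the two preceding lemmas.

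First, the decay. Take $w_0\in H\cap\mathbb{R}$, so $w_0\leq -1$, and write
\[
\mc{L}_{w_0}g(p)=-\int_0^\infty g(w_0-t)\mr{e}^{-p(w_0-t)}dt .
\]
For $\mr{Re}(p)<0$ this gives
\[
|\mc{L}_{w_0}g(p)|\leq \|g\|_H\,\frac{\mr{e}^{-w_0\mr{Re}(p)}}{|\mr{Re}(p)|}.
\]
Since $-w_0\geq 1$, the right-hand side is at most $\|g\|_H\,\mr{e}^{\mr{Re}(p)}/|\mr{Re}(p)|$. This tends to $0$ exponentially as $\mr{Re}(p)\to-\infty$, uniformly in $\mr{Im}(p)$. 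Hence the primitive $I\mc{L}_{w_0}g(p)=\int_{-\infty}^p\mc{L}_{w_0}g$ is well defined, path-independent in the simply connected domain $\mathbb{C}\setminus\mathbb{R}^+$, and normalized to vanish as $\mr{Re}(p)\to-\infty$.

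Next, $g(w)/w$. The hypothesis $H\subset\{\mr{Re}(w)\leq-1\}$ gives $|w|\geq 1$ on $H$, so $g(w)/w$ is bounded by $\|g\|_H$ and holomorphic on $H$. It extends continuously to the border, so it lies in $E_0(H)$ and its Laplace transform makes sense. Differentiating under the integral, which is justified by the exponential decay of the integrand for $\mr{Re}(p)<0$, gives
\[
\frac{d}{dp}\,\mc{L}_{w_0}\!\left(\frac{g(w)}{w}\right)(p)=\int_{\Gamma_{w_0,w_0,-1}}\frac{g(w)}{w}(-w)\,\mr{e}^{-pw}dw=-\mc{L}_{w_0}g(p).
\]
So $-\mc{L}_{w_0}(g/w)$ is a primitive of $\mc{L}_{w_0}g$ on $\{\mr{Re}(p)<0\}$, and by analytic continuation on all of $\mathbb{C}\setminus\mathbb{R}^+$.

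Finally, to identify this primitive with $I\mc{L}_{w_0}g$, it suffices to show that $\mc{L}_{w_0}(g/w)(p)\to0$ as $\mr{Re}(p)\to-\infty$. The first step, applied to $g/w$ in place of $g$, gives exactly this. The two primitives therefore differ by a constant that must vanish.

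The only delicate point is the normalization: $I$ must be understood as the primitive vanishing at $\mr{Re}(p)\to-\infty$, and this requires the decay to be uniform in $\mr{Im}(p)$. This is exactly what the hypothesis $w_0\leq -1$ provides, since the bound $\mr{e}^{\mr{Re}(p)}/|\mr{Re}(p)|$ does not depend on $\mr{Im}(p)$.
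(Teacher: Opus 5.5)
Your proof is correct and rests on the same idea as the paper: $-\mc{L}_{w_0}(g/w)$ has derivative $\mc{L}_{w_0}g$, and both it and $I\mc{L}_{w_0}g$ vanish as $\mr{Re}(p)\to-\infty$, so they coincide. The only difference is that the paper multiplies $\tilde g=-g/w$ by an exponential and uses the exponential-shift lemma, so that it can invoke its derivative lemma, which is stated only for functions with exponential decay. You instead note that $w\cdot(g/w)=g$ is already bounded and differentiate under the integral directly, which is cleaner and also makes explicit the decay estimate that the paper leaves implicit.
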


\begin{proof}
The function $\tilde{g}(w)=-g(w)/w$ is itself bounded ; $\mr{e}^{-w}\tilde{g}(w)$ has exponential decay so that $\mc{L}_{w_0}(w\mr{e}^{-w}\tilde{g}(w)) = -(\mc{L}_{w_0}(\mr{e}^{-w}\tilde{g}(w)))' = - (\mc{L}_{w_0}\tilde{g})'(p-1)$.
We deduce $\mc{L}_{w_0}g(p-1) = -(\mc{L}_{w_0}\tilde{g})'(p-1)$ for every $p$ ; since $\mc{L}_{w_0} \tilde{g}(p)$ and $-I\mc{L}_{w_0}g(p)$ have the same derivative and both tend to 0 when $\mr{Re}(p)$ tends to $-\infty$, they are equal.
\end{proof}

The Laplace transform of an inverse function is also quite classical, but let us remind it for the sake of completeness.

\begin{lem}
\label{lem_laplace_inverse_1}
The Laplace transform of the function $i_z: w\in H \mapsto \frac{1}{z-w}$ where $z\notin H$ is equal to $\mc{L}i_z(p) = \mr{e}^{-zp}\mr{log}(p)$ modulo an entire function of exponential growth.
\end{lem}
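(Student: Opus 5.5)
The plan is to compute the Laplace transform of $i_z$ explicitly along a convenient broken path, and then reduce to a standard integral. Fix $z\notin H$ and a base point $w_0\in H\cap\mathbb{R}$. First I treat $p$ real negative, $p<0$, and integrate along $\Gamma_{w_0,w_0,-1}$:
\[
\mc{L}_{w_0}i_z(p)=\int_{w_0}^{-\infty}\frac{\mr{e}^{-pw}}{z-w}\,dw .
\]
The substitution $u=z-w$ gives $\mr{e}^{-pw}=\mr{e}^{-pz}\mr{e}^{pu}$, and $u$ runs along the horizontal ray from $u_0=z-w_0$ to $+\infty$. The integral becomes $\mr{e}^{-pz}\int_{u_0}^{u_0+\infty}\mr{e}^{pu}\frac{du}{u}$, up to orientation sign. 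Since $z\notin H$ and $H$ is a union of left cones through real points, the ray $\{u_0+t,\ t\geq0\}$ avoids $0$. We can therefore write the integral as $\mr{e}^{-pz}E(p)$, where $E(p)=\int_{u_0}^{\infty}\mr{e}^{pu}u^{-1}du$ is an exponential-integral type function of $p$.

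The second step is to isolate the logarithm. I split $E(p)=\int_{1}^{\infty}\mr{e}^{pu}u^{-1}du+\int_{u_0}^{1}\mr{e}^{pu}u^{-1}du$, with the path from $u_0$ to $1$ chosen away from $0$. The second piece is entire in $p$ with exponential growth, of order at most $\max(|u_0|,1)$. After multiplication by $\mr{e}^{-pz}$ it is still in $E(\mathbb{C})$, so it can be discarded. For the first piece, write $\mr{e}^{pu}=1+(\mr{e}^{pu}-1)$ on $[0,1]$ and use the classical identity, for $p<0$,
\[
\int_1^\infty \mr{e}^{pu}\frac{du}{u} = -\gamma-\mr{log}(-p)-\int_0^1\frac{\mr{e}^{pu}-1}{u}\,du .
\]
This can be proved by differentiating in $p$: both sides have derivative $1/p$ after integrating $\mr{e}^{pu}$ over $[0,\infty)$. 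The last integral is an entire function of exponential growth. Hence $\mc{L}_{w_0}i_z(p)=\pm\mr{e}^{-pz}\mr{log}(-p)$ modulo $E(\mathbb{C})$ for $p<0$. Then $\mr{log}(-p)$ and $\mr{log}(p)$ differ by the constant $\pm i\pi$ on each side of $\mathbb{R}^+$. I would fix the branch of $\mr{log}$ on $\mathbb{C}\setminus\mathbb{R}^+$ consistently with the statement, and keep track of the sign coming from the orientation.

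The main obstacle is this passage from the real half-line to all of $\mathbb{C}\setminus\mathbb{R}^+$. Both sides are holomorphic there, and their difference is entire on $\mathbb{C}\setminus\mathbb{R}^+$. However, the constant $i\pi\mr{e}^{-pz}$ is itself entire of exponential growth, which is what makes the statement hold in $H^1_{\mathbb{R}^+}$. I would argue by analytic continuation from $p<0$: the difference $\mc{L}_{w_0}i_z(p)-\mr{e}^{-pz}\mr{log}(p)$ agrees with an explicit entire function on $]-\infty,0[$. By the identity theorem on the connected set $\mathbb{C}\setminus\mathbb{R}^+$, the equality then holds everywhere. One should also check that $i_z$ is bounded on $H$, possibly after shrinking $H$ to a neighborhood at positive distance from $z$, so that $\mc{L}i_z$ is defined by Proposition~\ref{prop_Lg}.
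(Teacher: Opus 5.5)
Your proof is correct, but it is organized differently from the paper's, and it has two small slips to fix.

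\textbf{Comparison with the paper.} The paper never evaluates the integral. Writing $h=\mc{L}_{w_0}i_z$ along a broken path $\Gamma$, it differentiates under the integral sign to get $zh+h'=\int_\Gamma \mr{e}^{-pw}dw=\mr{e}^{-pw_0}/p$. Equivalently, $\frac{d}{dp}(\mr{e}^{zp}h)=\frac1p+\frac{\mr{e}^{p(z-w_0)}-1}{p}$. It then integrates this on the simply connected set $\mathbb{C}\setminus\mathbb{R}^+$, where the second term has an entire primitive of exponential type.

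You instead substitute $u=z-w$ to reach the exponential integral, and move the ray from $u_0$ to $[1,+\infty[$. You then use the classical expansion of $E_1$ on $]-\infty,0[$ and extend by the identity theorem. The core fact is the same: your differentiation check of the $E_1$ identity is precisely $\frac{d}{dp}(\mr{e}^{zp}h)=\mr{e}^{pu_0}/p$.

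The paper's route is shorter. It works on all of $\mathbb{C}\setminus\mathbb{R}^+$ at once, with no contour choices near $0$. Yours gives the entire remainder explicitly and identifies $\mc{L}i_z$ with a classical special function. Modulo $E(\mathbb{C})$, even the homotopy class of your path from $u_0$ to $1$ is irrelevant, since changing it only adds an integer multiple of $2i\pi\,\mr{e}^{-pz}$.

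\textbf{First slip: the derivative.} The common derivative is not $\frac1p$. It is $\frac{d}{dp}\bigl[\int_1^\infty \mr{e}^{pu}\frac{du}{u}+\int_0^1\frac{\mr{e}^{pu}-1}{u}du\bigr]=\int_0^\infty \mr{e}^{pu}du=-\frac1p=\frac{d}{dp}\bigl(-\mr{log}(-p)\bigr)$. This only fixes the identity up to an additive constant, which is harmless here.

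\textbf{Second slip: the sign.} The sign you leave as $\pm$ must be settled, because $2\mr{e}^{-zp}\mr{log}(p)$ is not entire. The minus from $dw=-du$ cancels the minus in front of $\mr{log}(-p)$, so you get $+\mr{e}^{-zp}\mr{log}(-p)$, as required.

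Also, $\mr{log}(-p)-\mr{log}(p)$ is a single constant on the connected set $\mathbb{C}\setminus\mathbb{R}^+$ (an odd multiple of $i\pi$), not a side-dependent one.
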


\begin{proof}
Fix a point $w_0\in H$ and an integration path $\Gamma = \Gamma_{w_0,w_0,v}$ inside $H$.
Denote by $h(p) = \mc{L}_{w_0}i_z(p)$; the derivative of $h$ is
\[
h'(p) = \int_{\Gamma} \frac{-w\mr{e}^{-pw}}{z-w}dw,
\]
so that 
\[
zh(p)+h'(p) = \int_{\Gamma} \mr{e}^{-pw}dw = \frac{\mr{e}^{-pw_0}}{p}.
\]
It follows that 
\[
\frac{d}{dp}(\mr{e}^{zp}h(p)) = \frac{\mr{e}^{p(z-w_0)}}{p} = \frac{1}{p} + \frac{\mr{e}^{p(z-w_0)}-1}{p} = \frac{1}{p} + a(p).
\]
Note that $a(p)$ is a holomorphic function with exponential growth on $\mathbb{C}$; as such it has a primitive $A(p)$ which is entire with exponential growth.
Including the constants of integration in the function $A$, we conclude that
\[
h(p) = \mr{e}^{-zp}\mr{log}(p)+\mr{e}^{-zp}A(p).
\]
Remember that the logarithm has to be seen as a holomorphic function on $\mathbb{C}\setminus \mathbb{R}^+$ (whatever the determination).
\end{proof}

\begin{lem}
\label{lem_laplace_inverse_2}
Consider the function $l_z: p\in \mathbb{C}\setminus \mathbb{R}^+ \mapsto \mr{e}^{-zp}\mr{log}(p)$.
Its inverse Laplace transform is $\mc{L}^{-1}l_z(w) = \frac{1}{z-w}$.
\end{lem}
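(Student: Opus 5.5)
By Lemma \ref{lem_translation_exp}, applied formally with $a=-z$ (or directly, since the factor $\mr{e}^{-zp}$ simply shifts the region of convergence), I would reduce to the case $z=0$. Concretely, for any admissible path $\gamma_{\varepsilon,v}$ we have
\[
\mc{L}^{-1}l_z(w) = \frac{1}{2i\pi}\int_{\gamma_{\varepsilon,v}} \mr{log}(p)\,\mr{e}^{p(w-z)}dp = \mc{L}^{-1}l_0(w-z),
\]
valid wherever the integral converges, namely for $w-z$ in a neighborhood of $-\infty$. So it suffices to prove $\mc{L}^{-1}l_0(u) = -\frac{1}{u}$ for $u$ in a neighborhood of $-\infty$, since then $\mc{L}^{-1}l_z(w) = \frac{-1}{w-z}=\frac{1}{z-w}$.

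To compute $\mc{L}^{-1}l_0(u)$, I would first take $u<0$ real and then conclude by analytic continuation, since both sides are holomorphic in the connected neighborhood of $-\infty$ on which the integral converges. For $u<0$, the function $\mr{e}^{pu}$ decays on the right, so I can deform $\gamma_{\varepsilon,v}$ to the keyhole path that goes along $\mathbb{R}^+$ from $+\infty$ to $\varepsilon'$ on the upper side, around the small circle of radius $\varepsilon'$, and back to $+\infty$ on the lower side. This deformation is justified by Cauchy's theorem: on arcs joining the rays to the real axis at large $|p|$, the integrand is bounded by $\mr{log}|p|\,\mr{e}^{u\,\mr{Re}(p)}$, which tends to zero. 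The orientation matches the downward orientation of $\gamma_{\varepsilon,v}$: it comes in along the upper side and leaves along the lower side.

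On the two sides of the cut, the values of $\mr{log}(p)$ differ by $2i\pi$. With the determination $\mr{arg}\in]0,2\pi[$, the upper value is $\mr{log}(t)$ and the lower value is $\mr{log}(t)+2i\pi$. The contributions of $\mr{log}(t)$ cancel, leaving
\[
\frac{1}{2i\pi}\int_{\varepsilon'}^{\infty} 2i\pi\,\mr{e}^{tu}dt = \frac{-\mr{e}^{\varepsilon' u}}{u}.
\]
The small circle contributes $O(\varepsilon'\,|\mr{log}\,\varepsilon'|)$, which tends to $0$. Letting $\varepsilon'\to 0$ gives $\mc{L}^{-1}l_0(u)=-1/u$. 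Any other determination differs by an additive constant $c$, and $\frac{1}{2i\pi}\int_\gamma c\,\mr{e}^{pu}dp=0$ by Cauchy's theorem (closing to the right), so the result does not depend on the choice. The signs should be checked against the upward/downward convention in the definition of $\gamma_{\varepsilon,v}$.

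The main obstacle is bookkeeping rather than analysis. I need to verify that $l_z\in\ms{E}(V^-)$, which holds because $|\mr{e}^{-zp}\mr{log}\,p|\leq C\mr{e}^{M\mr{Re}(p)}$ outside cones, with the factor $|\mr{e}^{-zp}|$ controlled by $\mr{e}^{|z||p|}$ and $|p|$ comparable to $\mr{Re}(p)$ or bounded there. I also need to make the orientation and sign conventions consistent, so that the result is $\frac{1}{z-w}$ and not its negative. As a cross-check, this is consistent with Lemma \ref{lem_laplace_inverse_1}.
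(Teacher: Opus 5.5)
Your argument is correct and is essentially the paper's: the paper also collapses the contour onto $\mathbb{R}^+$, reads off $\frac{1}{2i\pi}\int_{\mathbb{R}^+}2i\pi\,\mr{e}^{p(w-z)}dp=\frac{1}{z-w}$ from the $2i\pi$ jump of the logarithm, and concludes by analytic continuation; the only difference is that it collapses by keeping the vertex $-\varepsilon$ and letting the opening angle $\theta'\to 0$, instead of using your keyhole with a shrinking circle. The one slip is your citation: the reduction to $z=0$ is just the substitution $\mr{e}^{-zp}\mr{e}^{pw}=\mr{e}^{p(w-z)}$, whereas Lemma \ref{lem_translation_exp} concerns translations in the variable $p$ and so is not the relevant statement.
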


\begin{proof}
Fix $\varepsilon\in ]0,1[$ and $\theta>0$ to define the transform $\mc{L}^{-1}_{\varepsilon,\theta}$.
Then for any $w\in C_-(\mr{Re}(z)-1,\frac{\pi}{2}-\theta)$, we see that the function $l_z(p)\mr{e}^{pw}$ has exponential decay when $\mr{Re}(p) \rightarrow +\infty$, and the transform $\mc{L}^{-1}_{\varepsilon,\theta}l_z(w)$ is well-defined.
It follows that $\mc{L}^{-1}_{\varepsilon,\theta'}l_z(w) = \mc{L}^{-1}_{\varepsilon,\theta}l_z(w)$ for any $\theta'<\theta$.
Letting $\theta'$ tend to zero we find 
\[
\mr{lim}_{\theta'\to 0} \mc{L}^{-1}_{\varepsilon,\theta'}l_z(w) = \int_{p\in \mathbb{R}^+} \mr{e}^{p(w-z)} dp = \frac{1}{z-w}.
\]
We conclude by analytic continuation that $\mc{L}^{-1}l_z(w) = \frac{1}{z-w}$ on the domain of definition of $\mc{L}^{-1}l_z$.
\end{proof}

\subsection{Duality between $\mc{L}$ and $\mc{L}^{-1}$}

We will prove that $\mc{L}$ and $\mc{L}^{-1}$ are inverse to one another in an indirect manner: we first prove an avatar of Cauchy's formula for functions with exponential decay in $H$, and then conclude using the fact that duality works for inverse functions $i_z(w) = 1/(z-w)$ and the continuity of the transforms.

In the following lemma, we will use a curve $\ms{C}\subset \mathbb{C}$ of the form $\ms{C} = \{ x+iy\in \mathbb{C} \:|\: x = \rho(|y|) \}$.
We will choose a function $\rho: \mathbb{R}^+ \rightarrow \mathbb{R}$ such that $\ms{C}\subset H$, $H_{\rho} := \{ x+iy \:|\: x\leq \rho(|y|)\}$ belong to $\ms{N}(-\infty)$ (that is, $\rho$ is sub-linear and convex), and $\rho(y)\leq -\mr{log}(1+y)$.
(For logarithmic neighborhoods, we can take $\rho(y) = C - k\:\mr{log}(1+y)$ with $k\geq 1$ and $C\leq 0$.)

In fact, the curve $\ms{C}$ can be thought of as the border of $H$, but when $H$ is too big (a half-plane for example), the integral below might diverge and we have to choose a sharper curve $\ms{C}$.

\begin{lem}
\label{lem_integral_border}
Suppose $g$ has exponential decay: $|g(w)|\leq C \mr{e}^{a\mr{Re}(w)}$ for some constants $C>0$ and $a\geq 2$.
Consider a function $\rho$ as above and the corresponding curve $\ms{C}$, border of the neighborhood $H_{\rho}\subset H$.
Then for every $w\in H_{\rho}$, we have
\[
2i\pi g(w) = \int_{\ms{C}} \frac{g(z)}{z-w}dz.
\]
The integral converges uniformly on every neighborhood $H_{\rho-\varepsilon}\subset H_{\rho}$.
\end{lem}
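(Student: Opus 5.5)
We prove this Cauchy formula by applying the classical formula on a bounded truncation of $H_\rho$ and letting the truncation go to infinity. Fix $w\in H_\rho$ and a large real number $R>0$.

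\textbf{Truncation.} Let $D_R$ be the bounded region enclosed by two pieces:
\begin{itemize}
\item the portion $\ms{C}_R=\ms{C}\cap\{|\mr{Im}(z)|\leq R\}$ of the curve;
\item a closing arc $A_R$, which we take to be the vertical segment $\{\mr{Re}(z)=-R'\}$ joining the two endpoints' imaginary levels, together with the two horizontal pieces $\{\mr{Im}(z)=\pm R,\ -R'\leq \mr{Re}(z)\leq \rho(R)\}$.
\end{itemize}
Here $R'\geq R$ is large enough that $w\in D_R$. Since $g$ is holomorphic on $H$ and $D_R\subset H_\rho\subset H$, with the boundary oriented as in the statement, the classical Cauchy formula gives
\[
2i\pi g(w)=\int_{\ms{C}_R}\frac{g(z)}{z-w}dz+\int_{A_R}\frac{g(z)}{z-w}dz .
\]
If the orientation of $\ms{C}$ is taken the other way, the sign changes and the convention of the statement fixes it.

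\textbf{The closing arc vanishes.} On $A_R$ we have $\mr{Re}(z)\leq\rho(R)\leq-\log(1+R)$, so
\[
|g(z)|\leq C(1+R)^{-a}\leq C(1+R)^{-2}.
\]
The horizontal pieces have length $O(R')$. Choose $R'=R$, or more precisely $R'=\max(R,|\rho(R)|)$, which is $O(R)$ since $\rho$ is sublinear. On the vertical segment we have $|g|\leq C\mr{e}^{-aR'}$, and its length is $2R$. The factor $1/|z-w|$ is $O(1/R)$ on all of $A_R$. Altogether
\[
\int_{A_R}\frac{g(z)}{z-w}dz = O\bigl(R\cdot R^{-2}\cdot R^{-1}\bigr)\longrightarrow 0 .
\]

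\textbf{Convergence of the integral over $\ms{C}$.} On $\ms{C}$, parametrize by $y=\mr{Im}(z)$, so that $z=\rho(|y|)+iy$. Then
\[
|g(z)|\leq C\mr{e}^{a\rho(|y|)}\leq C(1+|y|)^{-a}\leq C(1+|y|)^{-2}.
\]
Convexity and sublinearity of $\rho$ make $|\rho'|$ bounded for large $y$, so $|dz|\leq K\,dy$. Hence the integrand is $O\bigl((1+|y|)^{-2}/|z-w|\bigr)$, which is integrable as soon as $|z-w|$ stays bounded below.

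\textbf{Uniformity.} For $w\in H_{\rho-\varepsilon}$, the distance from $w$ to $\ms{C}$ is bounded below by a constant $c(\varepsilon)>0$. Indeed, the horizontal gap between the two curves is $\varepsilon$, and the slopes of $\rho$ are bounded, so the Euclidean distance is at least $\varepsilon/\sqrt{1+\sup|\rho'|^2}$. Therefore $|z-w|^{-1}\leq c^{-1}$ and the tails satisfy
\[
\int_{|y|>R}\leq K C c^{-1}\int_{|y|>R}(1+|y|)^{-2}dy ,
\]
which is independent of $w$. This gives the uniform convergence. Letting $R\to\infty$ in the truncated identity then yields the formula.

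The main obstacle is the estimate on the closing arc. It is there that the hypotheses $\rho\leq-\log(1+y)$ and $a\geq2$ are used: they give decay $(1+R)^{-2}$, which beats the $O(R)$ length of the horizontal pieces. The point requiring care is the choice of the left edge $R'$ relative to $\rho(R)$.
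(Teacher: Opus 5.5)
Your proof is correct and follows the paper's argument. You truncate $H_\rho$, apply the classical Cauchy formula, show the closing piece vanishes using $\rho(y)\leq-\log(1+y)$ and $a\geq 2$, and obtain convergence and uniformity from the decay $(1+|y|)^{-a}$ of $g$ on $\ms{C}$ together with a lower bound on $|z-w|$. The only difference, which is cosmetic, is that you close with a rectangle where the paper uses the circular arc $S_R=\partial D_R\cap H_\rho$.

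One small caveat concerns your distance estimate. The global bound $\sup|\rho'|<\infty$ can fail at $y=0$ for a general convex $\rho$ (if $\rho'(0^+)=-\infty$). Your Lipschitz argument still works for $|y|$ large, and a compactness argument handles bounded $|y|$, so the uniform lower bound on $d(w,\ms{C})$ over $H_{\rho-\varepsilon}$ remains true.
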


\begin{proof}
We can suppose $H=H_{\rho}$ in this proof to simplify.
We can obtain the formula by integrating on the border $\ms{C}_R$ of the intersection $H\cap D_R$ between $H$ and a disk $D_R$, and letting $R$ tend to infinity.
Decomposing $\ms{C}_R = (\ms{C}\cap D_R) \cup S_R$, we must show that the integral along the arc $S_R$ tends to zero, and that the integral along $\ms{C}$ converges.

First, the integral along $\ms{C}$ can be written 
\[
\int_{\ms{C}} \frac{g(z)}{z-w}dz = \int_{y=-\infty}^\infty \frac{g(z(y))}{z(y)-w}z'(y)d y,
\]
where 
\[
z(y) = \rho(|y|) + iy.
\]
We see that
\[
|g(z(y))| \leq C \mr{e}^{a \rho(|y|)}\leq C \frac{1}{(1+|y|)^a}.
\]
The function $z\mapsto \frac{1}{z-w}$ is bounded on $H$; the function $y \mapsto \rho'(y)$ tends to zero when $y$ tends to infinity since $\rho$ is sub-linear, so $z'(y)$ is also bounded.
It follows that the integral indeed converges when $a\geq 2$.

Next, note that $S_R$ is included in a half-plane $\{\mr{Re}(z) \leq \mr{Re}(z(y))\}$ where $z(y)$ is the intersection point between the circle $C_R$ of radius $R$ and $\ms{C}$.
The coordinates $z(y) = x +iy$ of this point satisfy the equations 
\[
\left\{\begin{aligned}
x^2 + y^2 &= R^2\\
x &= \rho(y) \leq -\mr{log}(1+|y|).
\end{aligned} \right.
\]
Since $\rho$ is sub-linear we have $y \sim R$ and so $\mr{Re}(z(y)) \leq c-\mr{log}(1+R)$ for some constant $c$.
The path $S_R$ has length at most $\pi R$ and the function under the integral is once again bounded by 
\[
\frac{|g(z)|}{|z-w|} \leq C \frac{c^a}{(1+R)^a},
\]
it follows that whenever $a\geq 2$ the integral along $S_R$ tends to zero.

This proves the simple convergence of the integral ; to obtain uniform convergence, we only need a uniform bound on the function $z \mapsto \frac{1}{z-w}$, which can easily be obtained if $w\in H_{\rho-\varepsilon}$.
\end{proof}

\begin{lem}
\label{lem_laplace_correspondence_decay}
Suppose $g\in E_0(H)$ has exponential decay in $H$ : $|g(w)|\leq C\mr{e}^{a\mr{Re}(w)}$ for some $C>0$ and $a\geq 2$.
Consider a function $\rho$ as above and the corresponding neighborhood $H_{\rho}$.

The inverse Laplace transform $\mc{L}^{-1}\mc{L}g$ is defined and bounded in $H_{\rho-1}$, and for every $w\in H_{\rho-1}$, we have 
\[
\mc{L}^{-1}\mc{L}g(w) = g(w).
\]
\end{lem}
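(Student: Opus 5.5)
The plan is to combine the Cauchy-type formula of Lemma \ref{lem_integral_border} with the duality for inverse functions given by Lemmas \ref{lem_laplace_inverse_1} and \ref{lem_laplace_inverse_2}, and then exchange the order of integration. By Lemma \ref{lem_integral_border}, for $w\in H_\rho$ we have $g(w)=\frac{1}{2i\pi}\int_{\ms{C}} g(z)\, i_z(w)\,dz$, where $i_z(w)=\frac{1}{z-w}$. The idea is that $\mc{L}$ and $\mc{L}^{-1}$ are linear integral operators, so they should pass under the $z$-integral. This would give
\[
\mc{L}^{-1}\mc{L}g(w)=\frac{1}{2i\pi}\int_{\ms{C}} g(z)\,\mc{L}^{-1}\mc{L}i_z(w)\,dz=\frac{1}{2i\pi}\int_{\ms{C}}\frac{g(z)}{z-w}dz=g(w).
\]

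The main work is to justify the two exchanges of integrals.

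\textbf{Step 1: the forward transform.} Fix a base point $w_0$ and a broken path $\Gamma=\Gamma_{w_0,w_1,v}$ lying inside $H_{\rho-1/2}$. For $w\in\Gamma$ and $z\in\ms{C}$ we have $|z-w|\geq c>0$, and $|g(z)|\leq C(1+|y|)^{-a}$ with $a\geq 2$. Together with the factor $\mr{e}^{-pw}$, which is integrable along the ray for $p\in\mb{H}_{\bar v}$, this lets Fubini apply. We obtain
\[
\mc{L}_{w_0}g(p)=\frac{1}{2i\pi}\int_{\ms{C}} g(z)\,\mc{L}_{w_0}i_z(p)\,dz,
\]
which extends to all of $\mathbb{C}\setminus\mathbb{R}^+$ by changing $v$.

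\textbf{Step 2: make the representative explicit.} Lemma \ref{lem_laplace_inverse_1} gives $\mc{L}_{w_0}i_z(p)=\mr{e}^{-zp}\mr{log}(p)+\mr{e}^{-zp}A_z(p)$. Here $A_z$ is entire, but its exponential growth depends on $z$. So the correction term $\int_{\ms C} g(z)\mr{e}^{-zp}A_z(p)\,dz$ must be shown to be entire with exponential growth; otherwise it could not be discarded under $\mc{L}^{-1}$. Rather than rely on that, I would compute $\mc{L}^{-1}$ of $\int_{\ms C} g(z)\mc{L}_{w_0}i_z(p)\,dz$ directly, integrating along $\gamma_{\varepsilon,v}$ for $w$ in a cone inside $H_{\rho-1}$.

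\textbf{Step 3: the inverse transform.} On $\gamma_{\varepsilon,v}$ the integrand $g(z)\mc{L}_{w_0}i_z(p)\mr{e}^{pw}$ must be jointly integrable in $(z,p)$. I would bound $\mc{L}_{w_0}i_z(p)$ uniformly in $z\in\ms{C}$ using the estimate of Proposition \ref{prop_Lg}, applied with $\|i_z\|_{H_{\rho-1/2}}\leq 2$. This gives $|\mc{L}_{w_0}i_z(p)|\leq 8\,\mr{e}^{M\mr{Re}(p)}/N(p)$ outside a cone, with $M$ independent of $z$. Since $\mr{Re}(p(w-z))$ controls the decay along the rays when $w$ lies in the right cone, Fubini applies. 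Then Lemma \ref{lem_laplace_inverse_2}, with the entire part vanishing under $\mc{L}^{-1}$ by Cauchy, gives $\mc{L}^{-1}\mc{L}i_z(w)=\frac{1}{z-w}$ for each $z$. This identity needs $w$ to be in the domain where $\mc{L}^{-1}\mc{L}i_z$ is given by that formula, i.e. roughly $\mr{Re}(w)<\mr{Re}(z)-1$ within suitable cones; the shift $H_{\rho-1}$ in the statement should provide this margin.

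I expect this uniformity in $z$ to be the main obstacle. The set $\ms{C}$ is unbounded, and the admissible $w$-region for $\mc{L}^{-1}\mc{L}i_z$ depends on $z$, so for a fixed $w$ one integration path $\gamma_{\varepsilon,v}$ must work for all $z\in\ms{C}$ simultaneously. If a single path does not suffice, I would split $\ms{C}$ into a compact part and tails, and use the decay $(1+|y|)^{-a}$ of $g$ to control the tails.

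Finally, boundedness of $\mc{L}^{-1}\mc{L}g$ on $H_{\rho-1}$ follows from Proposition \ref{prop_Lh}, or simply from the identity with $g$. The identity then holds on all of $H_{\rho-1}$ by analytic continuation.
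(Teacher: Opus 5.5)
Your route differs from the paper's, which never interchanges integrals. The paper replaces the Cauchy integral of Lemma \ref{lem_integral_border} by Riemann sums $g_n$, which are finite combinations of the $i_z$, and applies linearity to these. It then passes to the limit using the continuity of $\mc{L}^{-1}\mc{L}: E_0(H_{\rho-\varepsilon})\to E_0(H_{\rho-1-\varepsilon})$, which comes from the norm bounds of Propositions \ref{prop_Lg} and \ref{prop_Lh}. Your Step 1 is fine. So is the joint integrability in Step 3, but it must come from your $z$-uniform bound $|\mc{L}_{w_0}i_z(p)|\leq K\mr{e}^{M\mr{Re}(p)}/N(p)$, valid since $\sup_{z\in\ms{C}}\|i_z\|_{H_{\rho-1/2}}<\infty$. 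That bound makes the double integral converge for $w$ in a $z$-independent cone $\Omega=C_-(-M,\frac{\pi}{2}-\theta)\subset H_{\rho-1/2}$; the factor $\mr{e}^{p(w-z)}$ cannot supply it.

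The genuine gap is the step you flagged, and neither remedy you propose closes it. To get $\frac{1}{2i\pi}\int_{\gamma_{\varepsilon,v}}\mc{L}_{w_0}i_z(p)\mr{e}^{pw}dp=\frac{1}{z-w}$ from Lemmas \ref{lem_laplace_inverse_1} and \ref{lem_laplace_inverse_2}, you must integrate $\mr{e}^{p(w-z)}\mr{log}(p)$ and $\mr{e}^{-zp}A_z(p)\mr{e}^{pw}$ separately along the rays. This needs $w\in C_-(z,\frac{\pi}{2}-\theta)$. For a fixed $w$, this fails for every $z\in\ms{C}$ with $\mr{Re}(z)\leq\mr{Re}(w)$, and there are infinitely many such $z$ because $\rho(y)\to-\infty$. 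No choice of rays helps: the exponential rates along $-\varepsilon+t\mr{e}^{\pm i\theta}$ add up to $2\cos(\theta)\mr{Re}(w-z)\geq 0$. So the shift to $H_{\rho-1}$ gives no margin that is uniform in $z$. Cutting $\ms{C}$ into a compact part and tails does not help either. Without further input it only gives $|\mc{L}^{-1}\mc{L}g-g|\leq\delta$ on regions that recede to $-\infty$ as $\delta\to 0$, and that does not force the difference to vanish.

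The missing idea is analytic continuation in $w$ for each fixed $z$. Define $\Phi_z(w)=\frac{1}{2i\pi}\int_{\gamma_{\varepsilon,v}}\mc{L}_{w_0}i_z(p)\mr{e}^{pw}dp$; by the uniform bound it is holomorphic on the whole cone $\Omega$. On the nonempty open subcone $\Omega\cap C_-(z,\frac{\pi}{2}-\theta)\cap C_-(w_0,\frac{\pi}{2}-\theta)$, the decomposition of Lemma \ref{lem_laplace_inverse_1} can be integrated termwise. There the entire part decays in the sector and contributes $0$ by Cauchy, and the logarithmic part gives $\frac{1}{z-w}$. Moreover, $w\mapsto\frac{1}{z-w}$ is holomorphic on $\Omega$, since $\Omega$ avoids $\ms{C}$. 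The identity theorem therefore gives $\Phi_z=i_z$ on all of $\Omega$ for every $z\in\ms{C}$. Your Fubini step then yields $\mc{L}^{-1}\mc{L}g=g$ on $\Omega$, and so on $H_{\rho-1}$.

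The paper implicitly uses the same per-$z$ continuation, but only for finitely many $z$ at a time. The uniformity in $z$ that forces you into two Fubini justifications is absorbed there into the operator norm, which is why its proof is so short. Once patched, your version has the merit of proving the identity directly wherever the defining integral converges.
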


\begin{proof}
We already know by Propositions \ref{prop_Lg} and \ref{prop_Lh} that $\mc{L}^{-1}\mc{L}g$ is well-defined and bounded in $H_{\rho-1}$ ; we just need to show the formula.

By the integral formula of Lemma \ref{lem_integral_border}, we can see $2i\pi g(w)$ as a limit of Riemann sums 
\[
2i\pi g_n(w) = \frac{1}{n}\sum_{k=1}^n \frac{g(z_{n,k})}{z_{n,k}-w},
\]
and the convergence is uniform in $H_{\rho-\varepsilon}$.
We have $\mc{L}^{-1}\mc{L} g_n(w) = g_n(w)$ for every $w\in H_{\rho}$ from the Laplace transforms of inverse functions (Lemmas \ref{lem_laplace_inverse_1} and \ref{lem_laplace_inverse_2}), and the linearity of $\mc{L}$ and $\mc{L}^{-1}$.
The transform $g \mapsto \mc{L}^{-1}\mc{L}g$ is a continuous operator from $E_0(H_{\rho-\varepsilon})$ to $E_0(H_{\rho-1-\varepsilon})$ from the estimates in Propositions \ref{prop_Lg} and \ref{prop_Lh}.
Since $g_n \rightarrow g$ when $n\to \infty$ in the space $E_0(H_{\rho-\varepsilon})$, we obtain by continuity $\mc{L}^{-1}\mc{L}g(w) = g(w)$ for $w\in H_{\rho-1-\varepsilon}$.
This is true for every $\varepsilon>0$, thus for every $w\in H_{\rho-1}$.
\end{proof}

The Laplace transform and the inverse Laplace transform realize a correspondence between bounded holomorphic functions on neighborhoods of $-\infty$ and hyperfunctions of exponential growth along rays in a neighborhood of $\mathbb{R}^+$.

\begin{thm}
\label{thm_1}
If $g$ is bounded in some neighborhood $H\in \ms{N}(-\infty)$, extends continuously to the border, and $H$ is described by the function $\Theta_H$, then $\mc{L}g$ is well-defined on $\mathbb{C}\setminus \mathbb{R}^+$ and the hyperfunction $[\mc{L}g]$ belongs to the class $\ms{H}_\mu$ for $\mu(\theta) = - \Theta^{-1}(\frac{\pi}{2}-\theta)$.
We have $\mc{L}^{-1}\mc{L}g = g$ in the neighborhood $-1+H$.

If $[h]$ is a hyperfunction in the class $\ms{H}_\mu$ for some function $\mu$, then $\mc{L}^{-1}[h]$ is a bounded holomorphic function in the neighborhood $H_{\Theta}$ described by the function $\Theta$ given by $\Theta^{-1}(\theta) = -\mu(\frac{\pi}{2} - \theta)-1$.
We have $\mc{L}\mc{L}^{-1}[h] = [h]$ as hyperfunctions.
\end{thm}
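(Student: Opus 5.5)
The plan splits into four pieces: the growth estimates come from the two propositions already proved, and the two inversion identities are reduced to Lemma \ref{lem_laplace_correspondence_decay} and to Cauchy's theorem.

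\textbf{Growth statements.} The first assertion, that $\mc{L}g$ is well defined and lies in $\ms{H}_\mu$ with $\mu(\theta)=-\Theta^{-1}(\frac{\pi}{2}-\theta)$, is exactly Proposition \ref{prop_Lg}. For the second part, Proposition \ref{prop_Lh} with $V=\mathbb{C}$ shows that $\mc{L}^{-1}[h]$ is bounded on $\bigcup_\theta \overline{C}_-(-\mu(\theta)-1,\frac{\pi}{2}-\theta)$. Renaming $\theta\mapsto \frac{\pi}{2}-\theta$, this union is precisely $H_\Theta$ with $\Theta^{-1}(\theta)=-\mu(\frac{\pi}{2}-\theta)-1$. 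So only the two inversion identities need real work.

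\textbf{The identity $\mc{L}^{-1}\mc{L}g=g$ on $-1+H$.} I reduce to Lemma \ref{lem_laplace_correspondence_decay} by multiplying by exponentials. Fix $a\geq 2$ and set $g_a(w)=\mr{e}^{aw}g(w)$, which has exponential decay of order $a$.
\begin{itemize}
\item By Lemma \ref{lem_exp_translation}, $\mc{L}g_a(p)=\mc{L}g(p-a)$.
\item By Lemma \ref{lem_translation_exp}, $\mc{L}^{-1}\bigl(\mc{L}g(\cdot-a)\bigr)=\mr{e}^{-aw}\,\mc{L}^{-1}\mc{L}g$, wherever both sides are defined.
\item Lemma \ref{lem_laplace_correspondence_decay} gives $\mc{L}^{-1}\mc{L}g_a=g_a$ on some neighborhood $H_{\rho-1}$ of $-\infty$.
\end{itemize}
Dividing by $\mr{e}^{aw}$ gives $\mc{L}^{-1}\mc{L}g=g$ on a (possibly small) neighborhood of $-\infty$. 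Proposition \ref{prop_Lg} and Proposition \ref{prop_Lh} show that $\mc{L}^{-1}\mc{L}g$ is holomorphic on all of $-1+H$. Since $-1+H$ is connected, being a union of cones through real points, analytic continuation extends the identity to all of $-1+H$.

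\textbf{The identity $\mc{L}\mc{L}^{-1}[h]=[h]$.} Set $g=\mc{L}^{-1}[h]$. I compute $\mc{L}_{w_0}g(p)$ for $p$ with $\mr{Re}(p)$ very negative by integrating along the ray $\Gamma_{w_0,w_0,-1}$. I write $g(w)=\frac{1}{2i\pi}\int_{\gamma}h(q)\mr{e}^{qw}\,dq$, where $\gamma$ is a path $\gamma_{\varepsilon,v}$ around $\mathbb{R}^+$. When $\mr{Re}(p)$ is very negative and $w_0$ is far left, the double integral is absolutely convergent, so Fubini applies. The inner integral is
\[
\int_{\Gamma_{w_0,w_0,-1}} \mr{e}^{(q-p)w}\,dw=\frac{\mr{e}^{(q-p)w_0}}{p-q},
\]
hence
\[
\mc{L}_{w_0}g(p)=\frac{1}{2i\pi}\int_\gamma h(q)\frac{\mr{e}^{(q-p)w_0}}{p-q}\,dq .
\]
Now I close $\gamma$ into a contour around $p$, which lies to the left of $\gamma$. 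Cauchy's formula picks up the residue at $q=p$, which equals $h(p)$ up to the correct sign given the downward orientation. The contribution of the rest of the deformed contour is an entire function of $p$ with exponential growth. To see this, move $\gamma$ to a contour $\gamma'$ lying to the right of $p$; then
\[
\frac{1}{2i\pi}\int_{\gamma'} h(q)\frac{\mr{e}^{(q-p)w_0}}{p-q}\,dq
\]
is holomorphic in $p$ off $\gamma'$. Since $\gamma'$ can be shifted freely, this defines an element of $E(\mathbb{C})$. Therefore $\mc{L}g=[h]$ modulo $E(\mathbb{C})$, and analytic continuation extends the identity from the left half-plane to $\mathbb{C}\setminus\mathbb{R}^+$.

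\textbf{Main obstacle.} The delicate step is justifying Fubini and the contour shift, that is, controlling $h$ uniformly along rays near $\mathbb{R}^+$. For this I plan to use the representatives $\tilde h\in[h]$ provided by the definition of $\ms{H}_\mu$, which carry the factor $\mr{e}^{-\mr{Re}(vq)}$. I would choose $w$ in a cone where $\mr{Re}((w-w_0)q)$ is compensated, giving integrable decay in both variables.

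As an alternative route, I could use injectivity. Set $h'=\mc{L}\mc{L}^{-1}[h]-[h]$, so that $\mc{L}^{-1}h'=0$ by the first part, and then show that a hyperfunction in $\ms{H}_\mu$ with zero inverse transform is zero. That injectivity statement amounts to the same computation, so I would try the direct Fubini argument first.
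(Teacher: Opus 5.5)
Your growth statements and your proof of $\mathcal{L}^{-1}\mathcal{L}g=g$ on $-1+H$ follow the paper: exponential twist, then Lemma \ref{lem_laplace_correspondence_decay}, then analytic continuation. One sign needs fixing. Substituting $q=p-a$ gives $\mathcal{L}^{-1}\bigl(h(\cdot-a)\bigr)=\mathrm{e}^{aw}\mathcal{L}^{-1}h$, not $\mathrm{e}^{-aw}$ as printed in Lemma \ref{lem_translation_exp}. With the sign in your second bullet you would get $\mathcal{L}^{-1}\mathcal{L}g=\mathrm{e}^{2aw}g$. The corrected sign is what makes ``dividing by $\mathrm{e}^{aw}$'' work, and your choice $g_a=\mathrm{e}^{aw}g$ is the right one.

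For $\mathcal{L}\mathcal{L}^{-1}[h]=[h]$ you take a genuinely different route from the paper. The paper argues by injectivity, which is your ``alternative''. By the first part, $h_0=\mathcal{L}\mathcal{L}^{-1}h-h$ satisfies $\mathcal{L}^{-1}h_0=0$. Equality of the two ray integrals then continues $\mathcal{L}_{-\varepsilon}h_0$ to the complement of a compact set, and the Fourier--Laplace correspondence quoted from Morimoto forces $h_0$ to be entire of exponential type. Your Fubini--residue computation uses neither the first part nor an external theorem, and it gives the entire correction explicitly as a contour integral whose growth can be read off. The paper's route gives the structural fact that $\ker\mathcal{L}^{-1}=E(V)$, but its continuation step is only sketched.

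Three points in your write-up need correcting.

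(i) For $\mathrm{Re}(p)<-\varepsilon$, you pick up the residue at $q=p$ only by moving $\gamma$ to a contour passing to the \emph{left} of $p$. For example, take $\gamma'=\gamma_{\varepsilon',v}$ with $-\varepsilon'<\mathrm{Re}(p)$, so that $p$ lies in the component containing $\mathbb{R}^+$. A contour to the right of $p$ picks up nothing. With the downward orientation you then get $\mathcal{L}_{w_0}g(p)=h(p)+G'(p)$, where $G'(p)=\frac{1}{2i\pi}\int_{\gamma'}h(q)\frac{\mathrm{e}^{(q-p)w_0}}{p-q}\,dq$.

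(ii) $G'$ is not entire: its values on the two sides of $\gamma'$ differ by $h$. What you need is the following chain:
\begin{itemize}
\item $G'$ is holomorphic on the whole inner region of $\gamma'$, including across $\mathbb{R}^+$.
\item By analytic continuation on the connected set (inner region)$\setminus\mathbb{R}^+$, $G'$ equals $\mathcal{L}_{w_0}g-h$ there.
\item The inner values do not depend on $\varepsilon'$, and the inner regions exhaust $\mathbb{C}$ as $\varepsilon'\to\infty$. Hence $\mathcal{L}_{w_0}g-h$ is entire, and the integral representation gives its exponential bound.
\end{itemize}
``Shifted freely'' can only mean this, since $\gamma'$ can never cross $\mathbb{R}^+$.

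(iii) Your ``main obstacle'' is not one. Take a fixed $\gamma_{\varepsilon,v}$, the bound $|h(q)|\le C\mathrm{e}^{M\mathrm{Re}(q)}$ coming from $\mathscr{E}(V^-)$, a real $w_0<-M$, and $\mathrm{Re}(p)<-\varepsilon$. These already make the double integral absolutely convergent. The same bound justifies the shift between the parallel rays of $\gamma_{\varepsilon,v}$ and $\gamma_{\varepsilon',v}$. The $\mathscr{H}_\mu$ representatives are needed only for the boundedness of $\mathcal{L}^{-1}[h]$ in Proposition \ref{prop_Lh}.
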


\begin{proof}
We only need to reduce the general case to Lemma \ref{lem_laplace_correspondence_decay}.
Given a bounded function $g$ in some neighborhood $H$, we can consider $\tilde{g}(w) = \mr{e}^{-2w}g(w)$ which has exponential decay of order 2, and some function $\rho$ as needed to apply Lemma \ref{lem_laplace_correspondence_decay}.
We obtain $\mc{L}^{-1}\mc{L} \tilde{g} = \tilde{g}$ in $H_{\rho-1}$.

By Lemma \ref{lem_exp_translation}, we have $\mc{L} \tilde{g}(p) = \mc{L}g(p-2)$; by Lemma \ref{lem_translation_exp}, we have $\mc{L}^{-1} \mc{L} \tilde{g}(w) = \mr{e}^{-2w}\mc{L}^{-1}\mc{L}g(w)$.
It follows that $\mc{L}^{-1}\mc{L}g(w) = g(w)$ for every $w\in H_{\rho-1}$.
However, $\mc{L}^{-1}\mc{L}g$ is defined in $-1+H$ by Propositions \ref{prop_Lg} and \ref{prop_Lh}, and we supposed from the beginning that all our neighborhoods of $-\infty$ were connected.
By analytic continuation, we have $\mc{L}^{-1}\mc{L}g = g$ on $-1+H$.

Now, if $[h]\in \ms{H}_\mu$ is a hyperfunction, we know by Proposition \ref{prop_Lh} that $\mc{L}^{-1}h$ is a bounded holomorphic function in the neighborhood $H_{\Theta}$.
It follows from the first part that $\mc{L}^{-1}\mc{L}\mc{L}^{-1} h = \mc{L}^{-1}h$ in some neighborhood of $-\infty$.
This means that $h_0 := \mc{L}\mc{L}^{-1} h - h$ is in the kernel of $\mc{L}^{-1}$.

Remember that $\mc{L}^{-1}h_0$ were defined as a difference 
\[
2i\pi \mc{L}^{-1}h_0(w) = \int_{R(-\varepsilon,\bar{v})}h_0(p)\mr{e}^{pw}dp - \int_{R(-\varepsilon,v)}h_0(p)\mr{e}^{pw}dp.
\]
Saying that $\mc{L}^{-1}h_0 = 0$ means that both integrals are equals.
These integrals are in fact Laplace transforms $\mc{L}_{-\varepsilon}h_0(-w)$ based at the point $-\varepsilon$, and changing the path of integration as usual, we see that this defines a well-defined holomorphic function $\mc{L}_{-\varepsilon}h_0$ on the complement $\mathbb{C}\setminus K$ of a compact set $K$.
By the classical Fourier-Laplace correspondence \cite[Theorem 2.5.2]{morimoto_hyperfunctions}, this means that $h_0$ is a holomorphic function with exponential growth.
In particular, $h_0\in E(V)$ and $[h_0]=0$ as an element of $H^1_{\mathbb{R}^+}(V,\ms{E})$.
\end{proof}

\subsection{Example : logarithmic half-planes}
\label{sec_logarithmic_half-plane}

In the case of logarithmic neighborhoods of $-\infty$, we have a set $H = \{ x+iy \in \mathbb{C} \:|\: x\leq \rho(|y|) \}$ where $\rho(y) = w_0 - k\mr{log}(y)$ for $y$ big enough.
We can compute explicitely the growth function $\mu(\cdot)$.

Indeed, we can denote by $\ms{C}$ the border of $H$, by $R=R(w,-\mr{e}^{i\theta})$ the ray starting from $w\in H$ which is tangent to $\ms{C}$, and by $x+iy$ the intersection point between $\ms{C}$ and $R$.
We have by definition $x=\rho(y)$ and 
\[
\rho'(y) = \frac{x-w}{y} = -\mr{cotan}(\theta).
\]
In the case where $\rho(y) = w_0-k\mr{log}(y)$, we obtain 
\[
\begin{aligned}
&\frac{-k}{y} = \frac{w_0-w - k\mr{log}(y)}{y}\\
&\quad \Leftrightarrow w = w_0 +k - k\mr{log}(y),
\end{aligned}
\]
and 
\[
\mr{tan}(\theta) = \frac{y}{k}.
\]
In particular 
\[
\Theta^{-1}(\theta) = w_0+k-k\mr{log}(y) = (w_0+k-k\mr{log}(k)) - k\mr{log}(\mr{tan}\:\theta)
\]
The function $\mu$ is then given by 
\[
\mu(\theta) = -\Theta^{-1}(\frac{\pi}{2}-\theta) = -w_0 -k+ k\mr{log}(k) + k\mr{log}(\mr{cotan}(\theta)).
\]

Given a point $p=p_1+ip_2$ (with $p_1,p_2>0$) and $\theta = \mr{Arctan}(p_2/p_1)$, we get the following estimate for a function $h\in \ms{E}_\mu(V^-)$ :
\[
\begin{aligned}
|h(p)| &\leq \|h\|_\mu \frac{\mr{e}^{\mu(\theta)p_1}}{p_2}\\
&\leq \|h\|_\mu \frac{\mr{e}^{k'p_1}}{p_2} \left( \frac{p_1}{p_2} \right)^{kp_1}\\
&\leq \|h\|_\mu \frac{\mr{e}^{k'p_1}p_1^{kp_1}}{p_2^{kp_1+1}}.
\end{aligned}
\]

To sum up :

\begin{prop}
The class of logarithmic neighborhood $\ms{LN}(-\infty)$ correspond via Laplace transform to hyperfunctions in one of the classes $\ms{H}_{a,k}$.
\end{prop}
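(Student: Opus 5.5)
The statement has two directions, and I would treat each as a specialization of Theorem \ref{thm_1} combined with the explicit computation just carried out for logarithmic neighborhoods. In the forward direction, take $H\in\ms{LN}_{w_0,k}(-\infty)$ and $g\in E_0(H)$. Proposition \ref{prop_Lg} and Theorem \ref{thm_1} give $\mc{L}g\in\ms{H}_\mu(\mathbb{C})$ with $\mu(\theta)=-\Theta_H^{-1}(\frac{\pi}{2}-\theta)$. The envelope computation above yields, for $\theta$ small (equivalently $y$ large),
\[
\mu(\theta)=-w_0-k+k\,\mr{log}(k)+k\,\mr{log}(\mr{cotan}\,\theta).
\]
This is $\mu_{a,k}(\theta)$ with $a=-w_0-k+k\,\mr{log}\,k$. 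For $\theta$ away from $0$ the function $\mu$ is bounded above, because only a compact part of $\partial H$ is involved. So after enlarging $a$ we get $\mu\leq\mu_{a,k}$ for all $\theta\in]0,\pi/2[$. Since the classes $\ms{H}_\mu$ are monotone in $\mu$ (a larger $\mu$ gives a weaker estimate, as $\mr{Re}(p)>0$ on the relevant region), this shows $\mc{L}g\in\ms{H}_{a,k}$. The pointwise inequality defining $\ms{E}_{a,k}$ then follows as in the computation preceding the proposition, setting $\theta=\mr{Arctan}(p_2/p_1)$.

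In the reverse direction, take $[h]\in\ms{H}_{a,k}$. Proposition \ref{prop_Lh} with $V=\mathbb{C}$ shows that $\mc{L}^{-1}[h]$ is bounded on
\[
H=\bigcup_\theta\overline{C}_-\bigl(-\mu_{a,k}(\theta)-1,\tfrac{\pi}{2}-\theta\bigr).
\]
It remains to identify the border of this union of cones as a logarithmic curve. I would run the tangency computation in reverse. A point $x+iy$ of the envelope with vertex $w(\theta)=-a-1-k\,\mr{log}\,\mr{cotan}\,\theta$ satisfies $y=(x-w)\cdot(-\tan\theta)$ to first order, and differentiating in $\theta$ gives $\mr{tan}(\theta)\sim y/k$. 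Substituting back yields $x=\rho(y)=w_0'-k\,\mr{log}\,y$ for an explicit constant $w_0'$ and $y$ large. Hence $H$ has type $(w_0',k)$.

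I expect the main obstacle to be making this envelope inversion rigorous. The cleanest route is to avoid differentiating and directly verify the inclusion $\{x\leq w_0'-k\,\mr{log}\,|y|,\ |y|\geq y_0\}\subset H$. For each such point, choose $\theta$ with $\tan\theta=|y|/k$ and check that the point lies in the corresponding closed cone; this is an elementary inequality using $\mr{log}\,t\leq t-1$. By the remark following the definition of $\ms{LN}(-\infty)$, a neighborhood containing a logarithmic one counts as logarithmic, so this inclusion suffices. The small-$|y|$ part is handled by shifting $w_0'$.
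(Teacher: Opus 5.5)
There is a genuine gap in your forward direction, in the step that extends the envelope computation to all angles. That computation, which is exactly what the paper does, gives $\mu(\theta)=-w_0-k+k\,\mr{log}(k)+k\,\mr{log}(\mr{cotan}\,\theta)$ only for $\theta$ small. You then claim that enlarging $a$ yields $\mu\le\mu_{a,k}$ on all of $]0,\frac{\pi}{2}[$, and this fails near $\theta=\frac{\pi}{2}$.

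There $\mu_{a,k}(\theta)=a+k\,\mr{log}(\mr{cotan}\,\theta)\to-\infty$. But for $H\in\ms{LN}(-\infty)$ the set $\overline{H}\cap\mathbb{R}$ is bounded above by some $x_0$ (recall $\rho\le 0$). So every admissible vertex satisfies $\Theta_H^{-1}(\frac{\pi}{2}-\theta)\le x_0$, which gives $\mu(\theta)\ge -x_0$. That $\mu$ is bounded above away from $0$ is not the relevant point: $\mu_{a,k}$ is not bounded below, and no choice of $a$ helps. In particular the $\ms{E}_{a,k}$ bound at $\theta=\mr{Arctan}(p_2/p_1)$ does not follow for $p_2\gg p_1$; it would require $|h(1+ip_2)|=O(p_2^{-k-1})$.

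This cannot be patched, because the full-range statement is false. Take $g(w)=\frac{1}{z-w}$ with $z>x_0$ real, so $g\in E_0(H)$. If $\mc{L}g$ were in $\ms{H}_{\mu_{a,k}}$, Proposition \ref{prop_Lh} would make $\mc{L}^{-1}\mc{L}g$ holomorphic on the union of the open cones $C_-(-\mu_{a,k}(\theta)-1,\frac{\pi}{2}-\theta)$. This union is connected and contains all of $\mathbb{R}$, since the vertices tend to $+\infty$ as $\theta\to\frac{\pi}{2}$. Since $\mc{L}^{-1}\mc{L}g=g$ on $-1+H$ by Theorem \ref{thm_1}, $g$ would then extend holomorphically across its pole $z$.

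What the envelope computation actually establishes is that $\mu=\mu_{a,k}$ for small $\theta$, with $a=-w_0-k+k\,\mr{log}\,k$, and that $\mu$ is bounded on $[\theta_1,\frac{\pi}{2}[$. That is all ``logarithmic'' can mean here, since $\rho$ is prescribed only for large $y$. The correspondence must therefore be read at the level of this germ of $\mu$ at $\theta=0$. Taken literally, $\ms{H}_{a,k}$ corresponds to the idealized region $\{x\le w_0-k\,\mr{log}|y|\}$, which contains the whole real axis. The paper glosses over this by computing as if $\rho(y)=w_0-k\,\mr{log}(y)$ held for all $y$. You should state the forward direction in this germ form rather than asserting a global inequality.

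Your reverse direction, which the paper only obtains implicitly through Theorem \ref{thm_1}, is then fine, up to a convention slip. For the cone $\overline{C}_-(w,\frac{\pi}{2}-\theta)$ with $w=-\mu_{a,k}(\theta)-1$, the border rays are $|y|=(w-x)\,\mr{cotan}\,\theta$. Tangency therefore occurs at $\mr{cotan}\,\theta=|y|/k$, not at $\mr{tan}\,\theta=|y|/k$; with your literal choice the inclusion check fails for large $|y|$. With the correct choice, membership reduces exactly to $x\le w_0'-k\,\mr{log}|y|$ with $w_0'=-a-1-k+k\,\mr{log}\,k$, and no inequality $\mr{log}\,t\le t-1$ is needed. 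To get a genuine element of $\ms{LN}(-\infty)$, replace $\rho$ on $[0,y_1]$ by its tangent line at a large $y_1$; this line is the border of one of these cones. Merely shifting $w_0'$ does not achieve this.
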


An important feature of these hyperfunctions is that they grow polynomially when tending to a finite real point.

\subsection{Example : straight half-planes}
\label{sec_straight_half-plane}

This case is classical, let us just check that everything works as expected.

If $H = \{ x+iy \:|\: x<a \}$, the function $ \Theta_H^{-1}$ is constant $\Theta_H^{-1}(\theta) = a$ and we obtain $\mu(\theta) = a$.
Note that we are in fact in the degenerate case of a heighborhood $H$ whose border-defining function $\rho$ is not strictly convex : both $\Theta_H$ and $\Theta_H^{-1}$ are constant.
In fact, we can fix $w_0\in \overline{H}$ and compute the Laplace transform $\mc{L}g(p)$ of $g$ on vertical lines $w_0 \pm i \mathbb{R}^+$ when $\mr{Re}(p)\geq 0$ (or on $w_0 + \mathbb{R}^-$ when $\mr{Re}(p)<0$) : we obtain immediately 
\[
|\mc{L}_{w_0}g(p)| \leq \|g\|_H \frac{\mr{e}^{-w_0 \mr{Re}(p)}}{N(p)}.
\]

We get the optimal bound integrating along the border, that is, for $w_0 = a$.
For example, bounded holomorphic functions on $\{ \mr{Re}(w) < 0\}$ extending continuously to the border give hyperfunctions $h$ with $|h(p)|\leq C / N(p)$.

\section{Continuity properties for the Laplace transform}

Recall that the support $\mr{supp}([h])\subset \mathbb{R}^+$ of a hyperfunction $[h]$ (represented by $h$) is defined by the fact that $\beta$ is not in $\mr{supp}([h])$ if $h$ can be extended to a holomorphic function in a neighborhood of $\beta$.

We would like to say that the support of a hyperfunction depends continuously on the hyperfunction, but the reality is more tricky.
For example, we can use Runge's approximation to obtain a sequence $(h_n)$ of functions on $\mathbb{C}\setminus\{\beta_0\}$ with poles only at $\beta_0\in \mathbb{R}^+$ to approximate uniformly on every compact subset of $\mathbb{C}\setminus \mathbb{R}^+$ the function $h(p) = \frac{1}{p-\beta_1}$ with $\beta_1 > \beta_0$.

However, in the case of hyperfunctions in the class $\ms{H}_{a,k}$ (corresponding to a logarithmic half-plane), the order of the hyperfunction at a finite point must be bounded ; this will be sufficient to prove that this kind of behaviour does not happen.

\subsection{Hyperfunctions of locally finite order}
\label{sec_finite_order}

In this section, $V\subset \mathbb{C}$ is an angular neighborhood of $\mathbb{R}^+$.

\begin{df}
Let $[h] \in H^1_{\mathbb{R}^+}(V,\mc{E})$ be a hyperfunction; we say that $[h]$ has order at most $N\in \mathbb{N}$ around a point $\beta\in \mathbb{R}^+$ if there exists a neighborhood $U$ of $\beta$ in $\mathbb{C}$ and a representant $h\in \mc{O}(V\setminus \mathbb{R}^+)$ of $[h]$ which satisfies the estimate 
\[
|h(p)| \leq C \frac{1}{|\mr{Im}(p)|^{N+1}}
\]
for some constant $C>0$ and every $p\in U\setminus \mathbb{R}^+$.

We say that the hyperfunction $[h]$ is of locally finite order if it is of finite order around any point $\beta\in \mathbb{R}^+$.
\end{df}

\begin{lem}[Integration trick]
\label{lem_integration_trick}
Consider a point $\beta\in \mathbb{R}^+$, a neighborhood $U\subset V$ of $\beta$, a convex open set $D\subset V$ containing $[0,\beta]\cup U$ and a neighborhood $W\subset D$ of $\mathbb{R}^+\cap D$.
For any integer $N\geq 0$ and any compact subset $K\subset U$, there is a constant $b_{K,N}$ satisfying the following.
For any function $h\in \mc{O}((V\setminus \mathbb{R}^+)\cup U)$ satisfying the estimates 
\[
\begin{aligned}
&|h(p)| \leq C_W\quad \forall p\in D\setminus W,\\
&|h(p)| \leq C_U \frac{1}{|\mr{Im}(p)|^N}\quad \forall p\in U\setminus \mathbb{R}^+,
\end{aligned}
\]
we also have
\[
|h(p)|\leq b_{K,N}\mr{max}(C_W,C_U)\quad\forall p\in K.
\]
\end{lem}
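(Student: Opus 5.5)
The idea is to divide out the pole-like growth near $\mathbb{R}^+$ by multiplying $h$ with an auxiliary holomorphic function that vanishes to high order on the relevant part of the real axis. We then apply the maximum principle on a domain whose boundary avoids the bad region, and recover $h$ on $K$ by dividing back. This is the classical "integration trick" for hyperfunctions of finite order: the estimate $|h|\le C_U/|\mr{Im}(p)|^N$ on $U\setminus\mathbb{R}^+$, together with holomorphy of $h$ on $U$ itself, should force a uniform bound on compacts of $U$.

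\textbf{Step 1 (reduction to a region).} Since $h$ is holomorphic on $U$, the only issue is controlling $h$ on $K$ in terms of $C_W$ and $C_U$. Choose a smaller open set $U'$ with $K\subset U'\Subset U$, and let $\Omega\subset D$ be a bounded domain containing $U'$. We take $\partial\Omega$ to consist of a part lying in $D\setminus W$, where $|h|\le C_W$, and a part lying in $U\setminus\overline{U'}$. It is not clear a priori that such an $\Omega$ exists, because $h$ is not defined across $\mathbb{R}^+\setminus U$, so $\Omega$ must not contain points of $\mathbb{R}^+\setminus U$. The convexity of $D\supset[0,\beta]\cup U$ is what allows us to choose $\Omega$ as a neighborhood of $U'$ whose boundary crosses the real axis only inside $U$, at two points $\beta\pm\delta$.

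\textbf{Step 2 (the key estimate on the crossing arcs).} Near the two crossing points $\beta\pm\delta$ the boundary enters $W$ and we only have $|h|\le C_U/|\mr{Im} p|^N$, which blows up on $\mathbb{R}$. Here I would use the integration trick proper. Set $H_N(p)=\int_{\beta}^{p}\frac{(p-s)^{N-1}}{(N-1)!}h(s)\,ds$, the $N$-fold primitive, with paths in $U$. Integrating the bound $C_U/|\mr{Im}|^N$ along paths that approach the axis vertically, each primitive removes one power of $1/|\mr{Im}|$, up to a logarithm at the last step. Taking $N+1$ primitives therefore gives a function that is bounded by a constant times $\max(C_W,C_U)$ on the whole boundary of $\Omega$: on the part in $D\setminus W$ we integrate the bound $C_W$, and in the vertical direction we integrate the bound $C_U|\mr{Im}|^{-N}$.

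\textbf{Step 3 (conclusion).} By the maximum principle, the iterated primitive $H_{N+1}$ is bounded by $b'\max(C_W,C_U)$ on $\overline{\Omega}$, in particular on a fixed neighborhood of $K$. Then $h=H_{N+1}^{(N+1)}$, and the Cauchy estimates on a disk of fixed radius around each point of $K$ give $|h|\le b_{K,N}\max(C_W,C_U)$ on $K$. The constant depends only on $K$, $N$ and the fixed geometric data $U,D,W$, and not on $h$.

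The main obstacle is Step 2. One must make sure the primitives are well defined, i.e. single-valued: this holds on $\Omega$ because $h$ is holomorphic on $\Omega$, which is simply connected once it avoids $\mathbb{R}^+\setminus U$. One must also control them uniformly near the real axis outside $U$, where only $C_W$-type bounds are available through $W$. I would first try to arrange $\Omega$ so that $\overline{\Omega}\cap\mathbb{R}\subset U$, with everything in $\Omega\setminus U$ lying in $D\setminus W$, so that only the vertical integration inside $U$ needs the finite-order bound.
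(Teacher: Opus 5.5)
Your Step 2 has a genuine gap: it never gets across the real axis inside $U$, and that is exactly what the lemma is about. Your $H_{N+1}$ is one function on $\Omega$, normalized at one base point, but $\partial\Omega$ has arcs on both sides of $\mathbb{R}$.

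Vertical integration of $C_U|\mr{Im}\,p|^{-N}$ controls the iterated primitive only from one side, up to the axis. In the paper this is the Taylor-remainder estimate along a vertical segment that stays on one side of $\mathbb{R}$. To reach the other arc of $\partial\Omega$ inside $\overline{\Omega}$, you must cross $]\beta-\delta,\beta+\delta[$. There the only information is $|h|\leq C_U|\mr{Im}\,p|^{-N}$, which is not integrable through $\mr{Im}\,p=0$ once $N\geq 1$. Equivalently, continuing a Taylor expansion past a real point would need the values of $H_1,\dots,H_N$ there, and these are unbounded near the axis.

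The other route, around $0$ through $D\setminus W$, leaves $\Omega$. The function it produces on the lower side differs from your $H_{N+1}$ by the monodromy around the part of $W$ to the left of $U$. That monodromy is a polynomial of degree at most $N$ about which nothing is known a priori. So the maximum principle in Step 3 has no boundary data on half of $\partial\Omega$. Basing the primitive at $\beta$, as in your formula, makes this worse, since it ties $H_N$ to the unknown values of $h$ on the axis.

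The missing ingredient in the paper is to accept two primitives and control their difference. The paper builds $H^{(N+1)+}$ from above and $H^{(N+1)-}$ from below, via paths in $D\setminus W$; this is where $C_W$ enters. Each is bounded on its own half of $U$ and, by continuity, on $I=U\cap\mathbb{R}$. Because $h$ is holomorphic on all of $U$, both continue to $U$ with $(N+1)$-th derivative $h$. So $P=H^{(N+1)+}-H^{(N+1)-}$ is a polynomial of degree at most $N$, bounded on $I$. Lemma~\ref{lem_interpolation_rectangle} then gives $\|P\|_U\leq k^N\|P\|_I$. Hence $H^{(N+1)+}=H^{(N+1)-}+P$ is bounded on all of $U$, and Cauchy's estimate for the $(N+1)$-th derivative bounds $h$.

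Alternatively, your own opening idea closes the gap directly, provided the auxiliary function vanishes only at the crossing points. Take a rectangle $\Omega=]x_0-\delta,x_0+\delta[\times]-\eta,\eta[$ with $\overline{\Omega}\subset U$, and put $q(p)=((p-x_0-\delta)(p-x_0+\delta))^N$.
On each vertical side one of the two factors of $q$ equals $(i\,\mr{Im}\,p)^N$, so $|qh|\leq(2\delta+\eta)^N C_U$. On the horizontal sides, $|qh|\leq(2\delta+\eta)^{2N}\eta^{-N}C_U$. Since $qh$ is holomorphic on a neighborhood of $\overline{\Omega}$, the maximum principle bounds it on $\Omega$. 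Dividing by $|q|\geq(\delta/2)^{2N}$ on the half-size rectangle gives $|h|\leq b\,C_U$ there. Cover the part of $K$ near $\mathbb{R}$ by such rectangles; points of $K$ at height bounded below are handled directly by the hypothesis. This proves the lemma without primitives, without interpolation, and even without $C_W$.
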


See the proof for an explicit majoration.

\begin{figure}[H]
\centering
\includegraphics[scale=0.7]{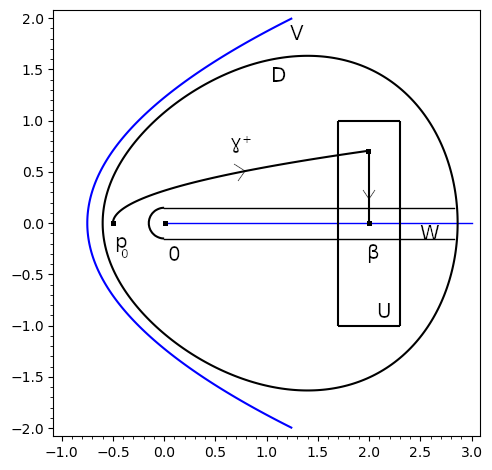}
\caption{The setup for the integration trick}
\end{figure}

\begin{proof}
Fix a base point $p_0\in D\setminus W$, and consider the primitive $H(p)$ of $h$ given by $H(p_0)=0$ (suppose for simplicity that $D\setminus W$ is simply connected).
The set $(D\setminus W) \cup U$ is no more simply connected, and there are two choices to extend $H$ to $U$: either we extend it from above (denote by $H^+$ this choice), or from below (denote that extension $H^-$).

To obtain an estimate of $H^+$ on $U\cap \{\mr{Im}(p)>0\}$, we can fix a path $\gamma^+$ starting at $p_0$ and crossing $(D\setminus W) \cap U$.
To arrive at some $p\in U\cap \{\mr{Im}(p)>0\}$, go from $p_0$ to the point $p_1$ on $\gamma^+$ vertically above $p$, and then integrate along a vertical line from $p_1$ to $p$.
We can estimate $|\int_{p_0}^{p_1} h(x)dx|\leq C_WL$ where $L$ is the length of $\gamma^+$, and
\[
|\int_{p}^{p_1} h(x)dx| \leq \int_{t=\mr{Im}(p)}^{\mr{Im}(p_1)} \frac{C_U}{t^N}dt = \frac{C_U}{N-1} \left( \frac{1}{\mr{Im}(p_1)^{N-1}} - \frac{1}{\mr{Im}(p)^{N-1}} \right).
\]
We see that the function $H$ satisfies the same kind of uniform estimates as $h$, but of order $N-1$.
By induction, we obtain similar estimates for the iterated primitives $H^{(k)}$ of $h$ (where $H^{(0)}=h$, and $H^{(k+1)}$ is a primitive of $H^{(k)}$).
In particular, $|H^{(N)}(p)|\leq C_1+C_2|\mr{log}(\mr{Im}(p))|$ for $p\in U\setminus \mathbb{R}^+$, and $|H^{(N+1)}|$ is bounded in $U\setminus \mathbb{R}^+$.

Consider Taylor's polynomial $T_{p_1}(p)$ for the function $H^{(N+1)}$ at the point $p_1$:
\[
T_{p_1}(p) = H^{(N+1)}(p_1) + (p-p_1)H^{(N)}(p_1) + \ldots + \frac{(p-p_1)^N}{N!}H^{(1)}(p_1).
\]
Taylor's formula with integral remainder then gives the following estimate for a point $p$ with $p=\mr{Re}(p_1)+ti$ and $t>0$:
\[
\begin{aligned}
|H^{(N+1)}(p) - T_{p_1}(p)| &\leq \int_{t}^{\mr{Im}(p_1)} |h(\mr{Re}(p_1)+si)|\frac{|t-s|^{N}}{N!}ds\\
&\leq \frac{C_U}{N!}\int_{t}^{\mr{Im}(p_1)} \frac{(s-t)^{N}}{s^N}ds\\
&\leq \frac{C_U}{N!}\int_t^{\mr{Im}(p_1)} 1.ds\\
&\leq \frac{C_U\mr{Im}(p_1) }{N!}
\end{aligned}
\]

We can estimate $|H^{(k)}(p_1)|$ and then $|T_{p_1}(p)|$ in function of the length $L$ of $\gamma^+$:
\[
|H^{(k)}(p_1)| \leq \frac{C_WL^k}{k!},
\]
\[
\begin{aligned}
|T_{p_1}(p)| &\leq \sum_{k=1}^{N+1} \frac{|p-p_1|^{N+1-k}}{(N+1-k)!}\frac{C_WL^k}{k!}\\
&\leq \frac{C_W}{(N+1)!} \left( L + |p-p_1| \right)^{N+1}.
\end{aligned}
\]

Now, recall that $h$ was in fact defined on the whole of $U$, so that its primitives $H^+$ and $H^-$ above and below $\mathbb{R}^+$ in fact differ from a constant.
More generally, $H^{(N+1)+}$ and $H^{(N+1)-}$ are in fact defined on the whole of $U$ and differ from a polynomial of degree $N$:
\[
H^{(N+1)+}(p) - H^{(N+1)-}(p) = P(p).
\]

In restriction to the real interval $I := U\cap \mathbb{R}$, we can use the latter estimates: 
\[
\begin{aligned}
|P(p)| &\leq |H^{(N+1)+}(p)| + |H^{(N+1)-}(p)| \\
&\leq \frac{C_U(\mr{Im}(p_1^+)-\mr{Im}(p_1^-))}{N!} + \frac{C_W}{(N+1)!} \left( (L+|p-p_1^+|)^{N+1}+(L+|p-p_1^-|)^{N+1} \right)\\
&\leq \frac{2C_UB}{N!}+ \frac{2C_W}{(N+1)!}(L+B)^{N+1},
\end{aligned}
\]
if $U$ is a rectangle with width $A$ and height $2B$.
Using Lemma \ref{lem_interpolation_rectangle} stated thereafter allows us to get an estimate on the whole of $U$:
\[
\|P\|_U \leq \frac{2C_UBk^N }{N!} + \frac{2C_Wk^N(L+B)^{N+1}}{(N+1)!},
\]
where $k=2e\sqrt{1+ (B/A)^2}$.
We get an estimate for $H^{(N+1)+}(p)$ in the region $\mr{Im}(p)<0$ using the equality $H^{(N+1)+} = H^{(N+1)-}+P$:
\[
|H^{(N+1)+}(p)| \leq \frac{C_UB(1+2k^N)}{N!}+ \frac{C_W(1+2k^N)}{(N+1)!}(L+B)^{N+1}.
\]

To obtain an estimate of $|h|$ in $U$, we use Cauchy's formula for the $(n+1)$-th derivative of $H^{(N+1)+}$: consider a point $p\in U$ at distance $d(p,\partial_{}U) > \ell$ from the border of $U$, and a circle $S$ of radius $\ell$ centered at $p$.
Then 
\[
\begin{aligned}
\frac{|h(p)|}{(N+1)!} &\leq \frac{1}{2\pi}\int_{z\in S} \frac{|H^{(N+1)+}(z)|}{|z-p|^{N+2}} |dz|\\
&\leq \frac{C_UB(1+2k^N)}{\ell^{N+2}N!}+\frac{C_W(1+2k^N)(L+B)^{N+1}}{\ell^{N+2}(N+1)!}.
\end{aligned}
\]
\end{proof}

\begin{lem}
\label{lem_interpolation_rectangle}
Consider a rectangular domain $U$ symmetric about $\mathbb{R}$.
Suppose that $I := U\cap \mathbb{R}$ has length $A$, $U$ has height $2B$ and put $k := 2e \sqrt{1+(B/A)^2}$.
Then for any polynomial $P$ of degree at most $N$, we have
\[
\mr{sup}_{x\in U} |P(x)| \leq k^N \mr{sup}_{x\in I} |P(x)|.
\]
\end{lem}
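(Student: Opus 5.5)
The plan is to prove the lemma with the classical Bernstein--Walsh inequality, i.e. the maximum principle applied on the complement of the segment $I$. The estimate is invariant under affine changes of variable, so we first normalize. Let $c$ be the centre of $I$ and substitute $x = c + \frac{A}{2}z$. Then $I$ becomes $[-1,1]$, and $U$ becomes a rectangle contained in $\{|\mr{Re}(z)|\leq 1,\ |\mr{Im}(z)|\leq 2B/A\}$. The polynomial $Q(z) = P(c+\frac{A}{2}z)$ still has degree at most $N$, and the two suprema in the statement are unchanged. So it suffices to show $|Q(z)| \leq k^N \|Q\|_{[-1,1]}$ on this normalized rectangle.

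Next, consider the Joukowski-type map $\varphi(z) = z + \sqrt{z^2-1}$. We take the branch holomorphic on $\mathbb{C}\setminus[-1,1]$ with $\varphi(z)\sim 2z$ at infinity. It maps $\mathbb{C}\setminus[-1,1]$ conformally onto the exterior of the closed unit disk. Moreover $|\varphi(z)|\to 1$ as $z$ tends to a point of $[-1,1]$, from either side. The function $F(z) = Q(z)/\varphi(z)^N$ is then holomorphic on $(\mathbb{C}\setminus[-1,1])\cup\{\infty\}$, and it is bounded at $\infty$ because $\deg Q\leq N$. Its boundary values on the two sides of $[-1,1]$ have modulus at most $\|Q\|_{[-1,1]}$. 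The maximum principle on this domain of the Riemann sphere therefore gives
\[
|Q(z)| \leq \|Q\|_{[-1,1]}\,|\varphi(z)|^N \quad\text{for all } z\in\mathbb{C}.
\]
One way to make the boundary step rigorous is to apply the maximum principle on the exterior of a thin ellipse $|\varphi| = 1+\delta$ and let $\delta\to 0$, using the continuity of $Q$.

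It remains to bound $|\varphi|$ on the normalized rectangle. Put $x = B/A$. For $z$ in the rectangle we have $|z| \leq s := \sqrt{1+4x^2} \leq 2\sqrt{1+x^2}$. Since $|\sqrt{z^2-1}| \leq \sqrt{|z|^2+1}$, we get
\[
|\varphi(z)| \leq s + \sqrt{s^2+1} \leq (1+\sqrt{2})\,s \leq 2(1+\sqrt{2})\sqrt{1+x^2},
\]
where the middle inequality uses $s\geq 1$. Since $1+\sqrt{2} < e$, the right-hand side is at most $2e\sqrt{1+(B/A)^2} = k$, and this gives the lemma.

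The only delicate step is the justification of the maximum principle at the slit: we need $F$ to be continuous up to both sides of $[-1,1]$ with $|\varphi|=1$ there, including near the endpoints $\pm1$ where the square root vanishes. The limiting argument with the confocal ellipses $|\varphi|=1+\delta$ handles this, and it is the part to write carefully. If one prefers to avoid conformal maps altogether, there is an alternative route. Use Lagrange interpolation at $N+1$ equally spaced nodes of $I$ and bound each basis polynomial on $U$ by elementary products. The factorial ratios produce the factor $e^N$ through Stirling's formula, and the ratio $|z-x_j|/A$ produces the factor $\sqrt{1+(B/A)^2}$. This route leads to the same constant up to a polynomial factor in $N$, which would then have to be absorbed.
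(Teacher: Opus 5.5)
Your proof is correct, but it takes a different route from the paper. You normalize $I$ to $[-1,1]$ and use the Bernstein--Walsh inequality $|Q|\le \|Q\|_{[-1,1]}\,|\varphi|^N$, which comes from the maximum principle for $Q/\varphi^N$ on $\hat{\mathbb{C}}\setminus[-1,1]$. You then bound $|\varphi|$ on the rectangle by $(1+\sqrt2)\sqrt{1+4(B/A)^2}\le k$.

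The paper instead follows the elementary route you only sketch at the end. It uses Lagrange interpolation at the $N+1$ equally spaced nodes $x_n$ of $I$, where $\prod_{j\neq n}|x_n-x_j|=(A/N)^N n!(N-n)!$ and $|x-x_j|\le\sqrt{A^2+B^2}$ on $U$. The identity $\sum_n \frac{1}{n!(N-n)!}=\frac{2^N}{N!}$ together with $N!\ge (N/e)^N$ then gives exactly $k^N$. Contrary to your closing remark, no polynomial factor in $N$ is lost, because the binomial sum is evaluated exactly rather than bounded by $N+1$ times its largest term.

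The trade-off is as follows. The paper's argument is purely algebraic and lands exactly on the stated constant. Yours needs a conformal map and a maximum principle, but yields the growth factor $\sup|\varphi|$ over the normalized rectangle. That factor is asymptotically sharp (Chebyshev polynomials) and strictly smaller than $k$; it even tends to $1$ for thin rectangles, whereas $k\ge 2e$ always.

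The step you single out as delicate is harmless. One has $|\varphi(z)|=\sigma+\sqrt{\sigma^2-1}$ with $\sigma=\frac{|z-1|+|z+1|}{2}$, so $|\varphi|$ extends continuously to all of $\mathbb{C}$ with value $1$ on $[-1,1]$, endpoints included. The $\limsup$ form of the maximum principle therefore applies directly. Alternatively, substitute $z=\frac{1}{2}(w+w^{-1})$ and work with $Q(z(w))/w^N$, which is holomorphic on $\{|w|>1\}\cup\{\infty\}$ and continuous up to $|w|=1$.
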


\begin{proof}
Write $I=[a,b]$ and introduce for $k=0,\ldots,N$ the points 
\[
x_k = a + k \frac{b-a}{N}.
\]
We will use interpolation at the points $x_k$ to write a polynomial $P$: introduce 
\[
Q_n(x) = \frac{\prod_{k\neq n}(x-x_k)}{\prod_{k\neq n}(x_n-x_k)},
\]
so that for every polynomial $P$ of degree at most $N$, we have
\[
P(x) = \sum_{n=0}^{N} P(x_n) Q_n(x).
\]
To estimate the norm of $P$ on $U$ we only need to estimate the norm of $Q_n$.
Note that for $k\neq n$ we have
\[
|x_n-x_k| \leq |n-k| \frac{A}{N},
\]
and that for every $x\in U$, the distance between $x$ and some $x_n$ is at most the diagonal of a half-rectangle: $|x-x_n| \leq D:= \sqrt{A^2+B^2}$.
We deduce that for every $x\in U$,
\[
|Q_n(x)| \leq D^N \left(\frac{N}{A} \right)^N \frac{1}{n!(N-n)!},
\]
\[
\begin{aligned}
|P(x)| &\leq \|P\|_I\cdot \sum_{n=0}^N D^N \left(\frac{N }{A} \right)^N \frac{1}{n!(N-n)!}\\
&\leq \|P\|_I \cdot \frac{D^N }{N!} \frac{N^N}{A^N} 2^N\\
&\leq \|P\|_I \cdot (2e\sqrt{1+(B/A)^2})^N,
\end{aligned}
\]
where we used the inequality $N!\geq (N/e)^N$, which is valid for every $N>0$ as a consequence of Robbins formula \cite{robbins_formula}.
\end{proof}

As an easy consequence of the integration trick's lemma, we can state the following:

\begin{cor}
\label{cor_extension}
In the same context as Lemma \ref{lem_integration_trick}, consider a sequence $(h_n)$ of functions $h_n\in \mc{O}((V\setminus \mathbb{R}^+)\cup U)$ satisfying uniform estimates 
\[
\begin{aligned}
&|h_n(p)| \leq C_W\quad \forall p\in D\setminus W,\\
&|h_n(p)| \leq C_U \frac{1}{|\mr{Im}(p)|^N}\quad \forall p\in U\setminus \mathbb{R}^+.
\end{aligned}
\]
Suppose there exists a function $h\in \mc{O}(V\setminus \mathbb{R}^+)$ such that $h_n \rightarrow h$ uniformly on every compact subset of $D\setminus \mathbb{R}^+$.
Then $h$ can be analytically extended to $U$, and 
\[
|h(p)| \leq b_{K,N} \mr{max}(C_W,C_U)\quad \forall p\in U.
\]
\end{cor}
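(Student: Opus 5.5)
The plan is to apply Lemma \ref{lem_integration_trick} uniformly to the sequence, to get local equiboundedness on $U$, and then to pass to the limit with Montel's theorem and the identity principle. Since the constant $b_{K,N}$ depends only on $K$, $N$ and the geometric data ($U$, $D$, $W$, and the choice of $p_0$ and $\gamma^\pm$), and not on the function, the lemma applied to each $h_n$ gives
\[
|h_n(p)| \leq b_{K,N}\,\mr{max}(C_W,C_U)\quad \forall p\in K,\ \forall n,
\]
for every compact $K\subset U$.

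First, I would show that $(h_n)$ is a normal family on $U$. Exhausting $U$ by compact sets $K_j$, the bound above says that $(h_n)$ is uniformly bounded on each $K_j$. By Montel's theorem there is a subsequence $h_{n_k}$ converging uniformly on compact subsets of $U$ to some $\tilde h\in\mc{O}(U)$.

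Next, I would identify $\tilde h$ with $h$. On $U\setminus\mathbb{R}^+$, which is contained in $D\setminus\mathbb{R}^+$, the full sequence $h_n$ converges to $h$ uniformly on compact sets. Hence $\tilde h=h$ on $U\setminus\mathbb{R}^+$. The function defined as $h$ on $V\setminus\mathbb{R}^+$ and as $\tilde h$ on $U$ is therefore well defined and holomorphic on $(V\setminus\mathbb{R}^+)\cup U$, and it is the required analytic extension. Since every subsequential limit agrees with $h$ off the real axis, the identity principle shows the limit is unique, so in fact the whole sequence converges on $U$; this is not needed for the statement, however.

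Finally, the estimate passes to the limit pointwise. For $p\in K$ we get $|h(p)|=\lim_k |h_{n_k}(p)|\leq b_{K,N}\mr{max}(C_W,C_U)$. The statement's ``$\forall p\in U$'' should be read as ``$\forall p\in K$'' for each compact $K\subset U$, with the constant $b_{K,N}$ of the lemma; equivalently, one shrinks $U$ slightly.

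The only delicate point is making sure that $b_{K,N}$ is truly independent of $n$. In the proof of the lemma, $b_{K,N}$ is built from $B$, $k$, $L$, $\ell$ and $N$, and all of these are fixed by the geometry. The two hypotheses are used only through $C_W$ and $C_U$, which are uniform in $n$ by assumption. So no real obstacle remains, and the corollary follows directly from the lemma.
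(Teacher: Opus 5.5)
Your proposal is correct, and it is the argument the paper leaves implicit when it calls this corollary an easy consequence of Lemma \ref{lem_integration_trick}. Applying the lemma with constants uniform in $n$ bounds the $h_n$ uniformly on compact subsets of $U$; a normal-families (Montel/Vitali) limit, which agrees with $h$ on $U\setminus\mathbb{R}^+$, then gives the holomorphic extension and the bound. You are also right that the conclusion ``$\forall p\in U$'' should be read as ``$\forall p\in K$'' for each compact $K\subset U$.
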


\subsection{Continuity of partial sums}
\label{sec_continuity_partial_sums}

Consider for each open subset $I\subset \mathbb{R}$ and each neighborhood $H\subset \ms{N}(-\infty)$ the subset $E_0^{(I)}(H)$ of bounded holomorphic functions $g$ on $H$, such that the support of the Laplace transform $\mc{L}g$ does not intersect $I$.
If $H$ is a logarithmic neighborhood, this subset is closed in $E_0(H)$ by Corollary \ref{cor_extension}.

If $\beta_1, \beta_2$ belong to $I$, we can consider a simple loop $\gamma$ in $(\mathbb{C}\setminus \mathbb{R}^+)\cup I$ circling once around $[\beta_1,\beta_2]$ in the direct sense, and intersecting $\mathbb{R}$ exactly at $\beta_1$ and $\beta_2$.
We can thus define the partial sum of $g$ as
\[
S_{[\beta_1,\beta_2]}g(w) = \frac{1}{2i\pi}\int_{\gamma} \mc{L}g(p)\mr{e}^{pw}dp.
\]

Taking $\beta_1$ negative gives usual partial sums.

We can define similarly remainders $S_{[\beta_3,\infty[}g$, integrating along the path obtained as the border of $C_+(0,\theta)\cap \{\mr{Re}(p)\geq \beta_3\}$ for some $\theta$.

\begin{thm}
\label{thm_partial_sums}
Consider a logarithmic neighborhood $H\in \ms{LN}(-\infty)$ contained in $\{\mr{Re}(w)\leq 0\}$, two points $\beta_1,\beta_2\in \mathbb{R}$, a number $r>0$ and $I = ]\beta_1-r,\beta_1+r[\cup ]\beta_2-r,\beta_2+r[$.
The application $S_{[\beta_1,\beta_2]}: E_0^{(I)}(H) \rightarrow E_0^{(I)}(-1+H)$ is continuous.

Similarly, if $\beta_3\in \mathbb{R}^+$ and $I=]\beta_3-r,\beta_3+r[$, the application $S_{[\beta_3,+\infty[} : E_0^{(I)}(H) \rightarrow E_0^{(I)}(-1+H)$ is continuous.
\end{thm}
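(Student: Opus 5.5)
The approach is to bound $S_{[\beta_1,\beta_2]}g$ by $\|g\|_H$, splitting the integration loop $\gamma$ into a part that stays at a fixed distance from $\mathbb{R}^+$ and a part crossing the real axis near $\beta_1,\beta_2$. The operator $S_{[\beta_1,\beta_2]}$ is linear, so continuity amounts to such a bound. Fix the representative $h=\mc{L}_{w_0}g$ with a base point $w_0$ chosen once and for all in $H$, say at the tip of a cone $\overline{C}_-(w_0,\frac{\pi}{2}-\theta)\subset\overline{H}$ for a fixed small $\theta$. Changing the representative by an entire function does not change the loop integral, by Cauchy's theorem. We may therefore aim for
\[
\sup_{w\in -1+H}|S_{[\beta_1,\beta_2]}g(w)| \leq K\,\|g\|_H .
\]
Fix the loop $\gamma$ independently of $g$: two arcs in $\{\mr{Im}(p)>0\}$ and $\{\mr{Im}(p)<0\}$ joining $\beta_1$ to $\beta_2$, which cross $\mathbb{R}$ vertically, and each at height at least some $\delta>0$ away from the small disks $D(\beta_i,r/2)$.

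\textbf{Step 1 (away from $\mathbb{R}^+$).} On the part of $\gamma$ outside $D(\beta_1,r/2)\cup D(\beta_2,r/2)$, we have $N(p)\geq\delta'$. Proposition \ref{prop_Lg} then gives $|h(p)|\leq 4\|g\|_H e^{-w_0\mr{Re}(p)}/N(p)$, which is bounded by a constant times $\|g\|_H$ on this compact piece. Since $\mr{Re}(p)\leq\max(|\beta_1|,|\beta_2|)$ there and $\mr{Re}(w)\leq 0$ on $-1+H$ (hence $|e^{pw}|$ is bounded), this piece contributes at most $K_1\|g\|_H$.

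\textbf{Step 2 (near $\beta_i$).} Here we use $g\in E_0^{(I)}(H)$: $h$ extends holomorphically to $]\beta_i-r,\beta_i+r[$, and we need a bound for the extension on the compact set $K=\overline{D}(\beta_i,r/2)$ that is linear in $\|g\|_H$. This is exactly Lemma \ref{lem_integration_trick}. Take $U$ a rectangle around $\beta_i$ of width $r$, and $D$ a convex set containing $[0,\beta_i]\cup U$. Take $W$ a thin tube around $\mathbb{R}^+\cap D$ with $N(p)\geq c$ on $D\setminus W$, so that $C_W\leq K_2\|g\|_H$ by Proposition \ref{prop_Lg}. For the estimate on $U\setminus\mathbb{R}^+$, we use that $H$ is logarithmic of type $(w_0,k)$. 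Then $\mc{L}g\in\ms{H}_{a,k}$, and the explicit bound of the class $\ms{E}_{a,k}$ gives
\[
|h(p)|\leq C\|g\|_H\,\frac{e^{a\mr{Re}(p)}\mr{Re}(p)^{k\mr{Re}(p)}}{|\mr{Im}(p)|^{k\mr{Re}(p)+1}}.
\]
On $U$ this is at most $C_U/|\mr{Im}(p)|^{N}$, with $N=\lceil k(\beta_i+r)\rceil+1$ and $C_U\leq K_3\|g\|_H$, after shrinking the height of $U$ so that $|\mr{Im}(p)|\leq 1$. If $\beta_i<0$ the point is not in $\mathbb{R}^+$, and $h$ is directly bounded near $\beta_i$ by Proposition \ref{prop_Lg}. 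The lemma yields $|h|\leq b_{K,N}\max(C_W,C_U)\leq K_4\|g\|_H$ on $K$, so the crossing segments of $\gamma$ contribute $K_5\|g\|_H$.

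\textbf{Step 3 (target space).} Steps 1 and 2 give continuity into bounded functions on $-1+H$. We must also check that $S_{[\beta_1,\beta_2]}g$ lies in $E_0^{(I)}(-1+H)$. It is entire in $w$, hence holomorphic and continuous up to the border. Its Laplace transform equals the Cauchy integral $\frac{1}{2i\pi}\int_\gamma \frac{h(q)}{p-q}\,dq$ modulo $E(\mathbb{C})$, which one checks via Lemma \ref{lem_laplace_inverse_1}-type computations or simply because $\mc{L}(e^{qw})=1/(p-q)$. This is holomorphic off $\gamma\cap\mathbb{R}=\{\beta_1,\beta_2\}$. Deforming $\gamma$ inside the region of holomorphy of $h$ shows that it is even holomorphic near $\beta_i$, so its support avoids $I$.

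\textbf{Step 4 (remainders).} For $S_{[\beta_3,\infty[}$ the path is the border of $C_+(0,\theta)\cap\{\mr{Re}(p)\geq\beta_3\}$, and only the unbounded rays are new. On them, Proposition \ref{prop_Lh} gives exactly the required bound: for $w\in -1+H$ and a suitable $\theta$, the $\ms{H}_\mu$ estimate with $\mu(\theta)=-\Theta^{-1}(\frac{\pi}{2}-\theta)$ makes $h(p)e^{pw}$ integrable along the rays with a bound $K\|g\|_H$. The vertical segment through $\beta_3$ is handled as in Step 2.

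The main obstacle is Step 2. It requires the order $N$ and the constant $C_U$ to be uniform in $g$. This is exactly where the logarithmic hypothesis enters, and care is needed to make the class-$\ms{H}_{a,k}$ estimate hold for the fixed representative $\mc{L}_{w_0}g$, rather than only for some representative depending on $g$.
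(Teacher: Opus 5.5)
There is a genuine gap in Steps 1 and 2. The claim that $|e^{pw}|$ is bounded for $p$ on your fixed loop $\gamma$ and $w\in -1+H$ is false. You have $|e^{pw}|=e^{\mr{Re}(p)\mr{Re}(w)-\mr{Im}(p)\mr{Im}(w)}$. Since $H$ is logarithmic, $-1+H$ contains points $w=x+iy$ with $|y|\to\infty$ and $x$ only of order $-k\log|y|$.

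\begin{itemize}
\item On the arcs: for $p=\beta+i\delta$ on the upper arc and such $w$ with $y\to-\infty$, you get $\mr{Re}(pw)\approx \delta|y|-k\beta\log|y|\to+\infty$.
\item On the crossing segments of Step 2: the same happens as soon as $\mr{Im}(p)\neq 0$. Your $g$-uniform bound on $h$ on $\overline{D}(\beta_i,r/2)$ therefore does not bound the integrand.
\item When $\beta_1<0$: even the real crossing point gives $|e^{\beta_1 w}|=e^{\beta_1\mr{Re}(w)}\to\infty$ as $\mr{Re}(w)\to-\infty$.
\end{itemize}

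So a contour chosen independently of $w$, used with the fixed representative $\mc{L}_{w_0}g$, only gives $|S_{[\beta_1,\beta_2]}g(w)|\leq K(w)\|g\|_H$ with $K(w)$ unbounded on $-1+H$. That is continuity for uniform convergence on compact sets, not for the sup norm of $E_0(-1+H)$, which is what the statement requires. The same phenomenon invalidates the shortcut $\mc{L}(e^{qw})=1/(p-q)$ in Step 3 for non-real $q$, since $e^{qw}\notin E_0(H)$.

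The missing ingredient is that the contour must depend on $w$, which is what the paper does and what you already do in Step 4. The paper first reduces everything to remainders:
\begin{itemize}
\item $S_{[\beta_1,\beta_2]}=S_{[\beta_1,+\infty[}-S_{[\beta_2,+\infty[}$ when $0<\beta_1<\beta_2$;
\item $S_{[\beta_1,\beta_2]}=\mr{id}-S_{[\beta_2,+\infty[}$ when $\beta_1<0<\beta_2$, the identity term being justified by $\mc{L}^{-1}\mc{L}g=g$ on $-1+H$ (Theorem \ref{thm_1}). This removes any crossing of $\mathbb{R}^-$.
\end{itemize}
Then, for $w=w_1+w_{12}$ in a cone $C_-(w_1,\frac{\pi}{2}-\theta)\subset -1+H$, it integrates along the boundary of $C_+(0,\theta)\cap\{\mr{Re}(p)\geq\beta_3\}$. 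On this contour $\mr{Re}(wp)\leq w_1\mr{Re}(p)$, because $\mr{Re}(w_{12}p)\leq 0$ on $\overline{C}_+(0,\theta)$. The short vertical segment through $\beta_3$ lies in the neighbourhood $U$ where the integration trick (Lemma \ref{lem_integration_trick}) gives $|h|\leq C\|g\|_H$. The two rays are handled with the growth estimates of Proposition \ref{prop_Lg}.

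If you drop the fixed loop and use this reduction, your Steps 2 and 4 become essentially the paper's proof. Keeping a compact loop would instead require a $w$-dependent representative such as $\mc{L}_{w}g$, for which Proposition \ref{prop_Lg} gives $|\mc{L}_{w}g(p)e^{pw}|\leq 4\|g\|_H/N(p)$ outside a thin cone. You would then have to redo the integration-trick estimate near $\beta_i$ uniformly in $w$.
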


\begin{proof}

Denote by $J=[\beta_1,\beta_2]$ or $J=[\beta_3,+\infty[$ the interval defining $S_J$.
It is enough to treat the case $J = [\beta_3,+\infty[$ with $\beta_3>0$.
Indeed, we can write $S_{[\beta_1,\beta_2]} = S_{[\beta_1,+\infty[}-S_{[\beta_2,+\infty[}$ when $0<\beta_1<\beta_2$, and $S_{[\beta_1,\beta_2]} = id - S_{[\beta_2,+\infty[}$ when $\beta_1<0<\beta_2$.

Consider a point $w_0\in \mathbb{R}^-\cap H$ and put $h=\mc{L}_{w_0}g$.
From the integration trick Lemma \ref{lem_integration_trick}, there is a neighborhood $U$ of $\beta_3$ in $\mathbb{C}$ and a constant $C$ such that $|h(p)|\leq C\|g\|_{H}$ for any $p\in U$ (the constant $C$ does not depend on $g$).
Now, for every $w\in -1+ H$, consider a cone $C_-(w_1,\frac{\pi}{2}-\theta)\subset -1+H$ containing $w$, and the integration path $\gamma$ given by the border of $C_+(0,\theta)\cap \{\mr{Re}(p)\geq \beta_3\}$.
We have 
\[
S_{[\beta_3,+\infty[}g(w) = \frac{1}{2i\pi}\int_{\gamma} h(p)\mr{e}^{pw}dp.
\]
Note that if $w = w_1+w_{12}$, then $\mr{Re}(wp) = w_1\mr{Re}(p)+\mr{Re}(w_{12}p)\leq w_1\mr{Re}(p)$ on the whole path $\gamma$.
We can estimate the integrals along the infinite parts of $\gamma$ as always, and on the vertical part we can consider that $w_1<-1$ to obtain $|h(p)\mr{e}^{wp}|\leq C\|g\|_{H}\mr{e}^{-\mr{Re}(p)}$.
The length of this segment is linear in $\mr{Re}(p)$ so any majoration of $\mr{Re}(p)\mr{e}^{-\mr{Re}(p)}$ allows us to conclude.
\end{proof}

\begin{cor}
\label{cor_limit_series}
Consider subsets $R_n,R\subset \mathbb{R}^+$, enumerated by increasing sequences $(\beta_{n,k})_k, (\beta_k)_k$.
Consider a logarithmic half-plane $H$ and bounded function $g_n,g\in E_0(H)$.

Suppose that $g_n \rightarrow g$ uniformly, that $\beta_{n,k} \underset{n\to\infty}{\longrightarrow} \beta_k$ for each $k$, and that the support of the hyperfunction $\mc{L}g_n$ is contained in $R_n$ for each $n$.
Then the support of $\mc{L}g$ is contained in $R$.

Moreover, for each $\beta_1,\beta_2\notin R$, the partial sums $S_{[\beta_1,\beta_2]}g_n$ are well-defined for $n$ big enough, and $S_{[\beta_1,\beta_2]}g_n \rightarrow S_{[\beta_1,\beta_2]}g$ uniformly on $-1+H$.
\end{cor}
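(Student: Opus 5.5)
The plan is to reduce both claims to Corollary \ref{cor_extension} and Theorem \ref{thm_partial_sums}, using uniform estimates on the Laplace transforms $h_n = \mc{L}_{w_0}g_n$ and $h = \mc{L}_{w_0}g$. Fix a base point $w_0\in\mathbb{R}^-\cap H$.

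\textbf{Step 1: convergence of the transforms.} By Proposition \ref{prop_Lg} and the linearity of $\mc{L}$, $h_n - h = \mc{L}_{w_0}(g_n-g)$ is bounded on every set $\mathbb{C}\setminus\overline{C}_+(0,\theta)$ (after shrinking to where the estimates hold) by
\[
4\|g_n-g\|_H\,\frac{\mr{e}^{\mu(\theta)\mr{Re}(p)}}{N(p)}.
\]
Hence $h_n\to h$ uniformly on compact subsets of $\mathbb{C}\setminus\mathbb{R}^+$.

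\textbf{Step 2: uniform finite-order bounds.} Since $H$ is logarithmic, $\mc{L}g_n\in\ms{H}_{a,k}$ with norm at most $4\sup_n\|g_n\|_H$. On a rectangle $U$ around a point $\beta\in\mathbb{R}^+$ we have $\mr{Re}(p)\le \beta+1$, so the $\ms{E}_{a,k}$ estimate yields a bound $C_U/|\mr{Im}(p)|^{N}$ with $N\approx k(\beta+1)+1$. The constant $C_U$ is uniform in $n$. Likewise we get a uniform bound $C_W$ on $D\setminus W$.

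\textbf{Step 3: support of $\mc{L}g$.} Take $\beta\notin R$. Since $R$ is discrete, closed and enumerated increasingly, there is $r>0$ with $]\beta-2r,\beta+2r[\cap R=\emptyset$. Because $\beta_{n,k}\to\beta_k$ for each $k$, for $n$ large no $\beta_{n,k}$ lies in $]\beta-r,\beta+r[$. The key point is to control all indices $k$ at once. I would use that only finitely many $\beta_k$ lie below $\beta+2r$. I would then need the tails $\beta_{n,k}$ for large $k$ to stay above $\beta+r$, which follows from monotonicity: $\beta_{n,k}\ge\beta_{n,k_0}\to\beta_{k_0}>\beta+2r$. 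The finitely many smaller indices converge to points outside $]\beta-2r,\beta+2r[$. This bookkeeping is the main obstacle, together with checking that $R_n$ really is exhausted by the enumeration.

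So for $n$ large, $h_n$ extends holomorphically to a rectangle $U$ around $\beta$ that is disjoint from $R_n$. Corollary \ref{cor_extension} then shows that $h$ extends to $U$, i.e.\ $\beta\notin\mr{supp}\,\mc{L}g$.

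\textbf{Step 4: partial sums.} Given $\beta_1,\beta_2\notin R$, Step 3 gives $r>0$ such that, with $I=]\beta_1-r,\beta_1+r[\cup]\beta_2-r,\beta_2+r[$, both $g_n$ (for $n$ large) and $g$ lie in $E_0^{(I)}(H)$. In particular the partial sums $S_{[\beta_1,\beta_2]}g_n$ are well-defined for $n$ large. Since $g_n\to g$ in $E_0^{(I)}(H)$ (a closed subspace), Theorem \ref{thm_partial_sums} and linearity give $S_{[\beta_1,\beta_2]}g_n\to S_{[\beta_1,\beta_2]}g$ uniformly on $-1+H$. If $H\not\subset\{\mr{Re}(w)\le0\}$, first translate $H$ and use Lemma \ref{lem_exp_translation}.
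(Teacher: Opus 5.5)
Your proposal is correct and follows the argument the paper intends; the paper states this corollary without proof. The intended proof combines the uniform finite-order bounds on logarithmic half-planes coming from Proposition \ref{prop_Lg}, Corollary \ref{cor_extension} for the support, and the linear bound $\|S_Jg\|_{-1+H}\le C\|g\|_H$ behind Theorem \ref{thm_partial_sums}, and your monotonicity argument showing $R_n\cap\,]\beta-r,\beta+r[\,=\emptyset$ for large $n$ supplies the one detail the paper leaves implicit.

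Two small remarks. First, your extra assumption that $R$ is closed and discrete ($\beta_k\to\infty$) is genuinely needed. For instance, take $\beta_{n,k}=1-\frac{1}{k+1}$ for $k\le n$, $\beta_{n,k}=k$ for $k>n$, and $g=\sum_k 2^{-k}\mr{e}^{(1-\frac{1}{k+1})w}$: then $1\notin R$ lies in the support of $\mc{L}g$. Second, the translation in your last step rests on the covariance $\mc{L}\bigl(g(\cdot+c)\bigr)=\mr{e}^{cp}\mc{L}g$ modulo entire functions, not on Lemma \ref{lem_exp_translation}.
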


\section{Summation formulas on logarithmic half-planes}
\label{sec_summation_formulas}

As explained in the introduction, the goal of this section is to give summation formulas : when $g(w)$ is a discrete sum of exponentials $g(w) = \sum_{\beta\in R} a_{\beta} \mr{e}^{\beta w}$, how can we compute $g$ from its coefficients $a_{\beta}$ ?

\subsection{The Laplace transform along a logarithmic curve}

We begin by an alternate formula for the Laplace transform, which will conveniently give estimates for its primitives.
We will use a "logarithmic curve" $\ms{C}\subset H$:
\[
\ms{C} = \{ x+iy \in \mathbb{C} \:|\: x= \rho(|y|) \},
\]
for $\rho(y) = -a - k\mr{log}(1+y)$.
We can see it as the border of a logarithmic half-plane (our definition of logarithmic half-planes was slightly different, but is equivalent asymptotically).

We will write $\ms{C}^+ = \ms{C}\cap \{\mr{Im}(w) \geq 0\}$ and $\ms{C}^-=\ms{C}\cap \{\mr{Im}(w)\leq 0\}$.

\begin{lem}
\label{lem_laplace_courbe_log}
Consider a curve $\ms{C}\subset H$ as above and the point of intersection $w_0$ between $\ms{C}$ and $\mathbb{R}$.
For every $p\in \mathbb{C}\setminus \mathbb{R}^+$ with $\mr{Im}(p)\leq 0$ (resp. $\mr{Im}(p)\geq 0$), we have 
\[
\mc{L}_{w_0}g(p) = \int_{\ms{C}^+}g(w)\mr{e}^{-pw}dw\quad (\text{resp. } =\int_{\ms{C}^-}g(w)\mr{e}^{-pw}dw).
\]
\end{lem}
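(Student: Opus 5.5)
We deform the broken path $\Gamma_{w_0,w_0,v}$ defining $\mc{L}_{w_0}g(p)$ onto the half-curve $\ms{C}^+$ (oriented from $w_0$ to infinity), using Cauchy's theorem. We first check that the integral along $\ms{C}^+$ converges, and then show that the contribution of a connecting arc at infinity vanishes. We treat the case $\mr{Im}(p)\leq 0$; the other case follows by the symmetry $y\mapsto -y$.

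\textbf{Step 1: convergence along $\ms{C}^+$.} Parametrize $\ms{C}^+$ by $w(y)=\rho(y)+iy$ for $y\geq 0$, and write $p=p_1+ip_2$ with $p_2\leq 0$. Then
\[
\mr{Re}(-pw(y)) = -p_1\rho(y) + p_2 y = p_1 a + p_1 k\,\mr{log}(1+y) + p_2 y,
\]
so that $|g(w)\mr{e}^{-pw}|\leq \|g\|_H \mr{e}^{p_1 a}(1+y)^{kp_1}\mr{e}^{p_2 y}$. When $p_2<0$ this decays exponentially, because the polynomial factor $(1+y)^{kp_1}$ is dominated. Moreover $|w'(y)|$ is bounded, since $\rho'(y)=-k/(1+y)$. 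The case $p_2=0$ forces $p_1<0$ because $p\notin\mathbb{R}^+$; then the integrand is bounded by $(1+y)^{-k|p_1|}$, which is integrable only when $k|p_1|>1$. We therefore first prove the identity on the open set $\mr{Im}(p)<0$. The integral along $\ms{C}^+$ is holomorphic in $p$ there, locally uniformly. We then extend to $p_2=0$, $p_1<0$ either by continuity or by noting that both sides are holomorphic where the right-hand side converges.

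\textbf{Step 2: the Cauchy deformation.} Fix $p$ with $\mr{Im}(p)<0$, and pick a direction $v$ with $\mr{Re}(pv)>0$ and $\mr{Re}(v)<0$ such that the ray $R(w_0,v)$ lies in $\overline{H}$ and above $\ms{C}^+$ on the same side. Such a $v$ exists by taking $v=-\mr{e}^{-i\alpha}$ with $\alpha$ slightly positive and the ray pointing up-left. If the ray leaves the region on the correct side, we may instead use $\Gamma_{w_0,w_1,v}$, since $\mc{L}_{w_0}$ is independent of these choices. The ray and $\ms{C}^+$ bound a region $\Omega\subset \overline{H}$ in which $g$ is holomorphic and continuous up to the border. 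Cut $\Omega$ by the circle $|w|=R$. Cauchy's theorem then gives the integral along the ray up to radius $R$, minus the integral along $\ms{C}^+$ up to radius $R$, equal to the integral over an arc $A_R$ of the circle between the two curves. It remains to show that the arc integral tends to $0$.

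\textbf{Step 3: vanishing of the arc (main obstacle).} The arc $A_R$ has length $O(R)$. On it we need $\mr{Re}(-pw)\leq -cR$ for some $c>0$. The arc lies in the sector between the directions $v$ and $i$, because $\ms{C}^+$ is asymptotically vertical, with $\mr{Arg}\to\pi/2$ since $\rho$ is sublinear. The function $w\mapsto \mr{Re}(-pw)/|w|$ is of the form $\cos(\text{angle})$, so it is a sinusoid in $\mr{Arg}(w)$ and its maximum over an angular interval is attained at an endpoint or an interior critical point. At the direction $v$ it is negative by choice of $v$. In the direction $i$ it equals $\mr{Re}(-ip)=p_2<0$. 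For a sector of opening less than $\pi$, negativity at both ends gives negativity throughout, and hence a uniform bound $\mr{Re}(-pw)\leq -c|w|$ on the sector. Since the endpoint of $A_R$ on $\ms{C}^+$ only approaches the direction $i$, we verify this with a slightly enlarged sector using continuity. The arc integral is then $O(R\mr{e}^{-cR})\to 0$. Letting $R\to\infty$ gives the identity for $\mr{Im}(p)<0$, and Step 1 extends it to the boundary values.
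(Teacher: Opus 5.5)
Your argument is correct in substance and uses the same mechanism as the paper: Cauchy deformation of a broken Laplace path onto $\ms{C}^+$. However, one claim in Step 2 is false as stated. With $v=-\mr{e}^{-i\alpha}$ and $p_2<0$ you have $\mr{Re}(pv)=-p_1\cos\alpha+|p_2|\sin\alpha$. So for $p_1>0$ you need $\tan\alpha>p_1/|p_2|$, and when $p$ is close to $\mathbb{R}^+$ the direction $v$ must be nearly vertical, not ``$\alpha$ slightly positive''. Worse, as soon as $|p_2|<kp_1$, every admissible $v$ has $\cot\alpha<k$. Then the ray $R(w_0,v)$ starts on the wrong side of $\ms{C}^+$, because $\ms{C}^+$ leaves $w_0$ with $dx/dy=\rho'(0)=-k$. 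The ray recrosses $\ms{C}^+$ further up, and the lens in between need not lie in $H$. So in that regime the broken path $\Gamma_{w_0,w_1,v}$ is compulsory rather than a fallback. Take $[w_0,w_1]=\ms{C}^+(w_0,w_1)$ and $w_1=x_1+iy_1$ with $k/(1+y_1)<\cot\alpha$, which keeps $R(w_1,v)$ to the left of $\ms{C}$. This is exactly the paper's set-up.

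From there the two routes differ only in how the contribution at infinity is handled. The paper closes no arc. It lets $w_1\to\infty$ along $\ms{C}^+$ and bounds the ray piece by $\|g\|_H\mr{e}^{-\mr{Re}(pw_1)}|v|/\mr{Re}(pv)$, where $\mr{Re}(pw_1)=p_1x_1-p_2y_1\to+\infty$. It bounds the tail $\int_{\ms{C}^+(w_1,\infty)}$ by your Step 1 estimate. Your Step 3 applies Cauchy on the truncated region and uses $\mr{Re}(-pw)\leq -c|w|$ on the sector between $\pi/2$ and $\mr{Arg}(v)$. This is correct, since a sinusoid negative at both ends of an interval of length $<\pi$ is negative throughout it. It is simply an extra estimate that sliding the breakpoint makes unnecessary.

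Finally, your extension to real $p$ works, by dominated convergence, only for $p<-1/k$. For $-1/k\leq p<0$ the right-hand side fails to converge absolutely already for $g\equiv 1$. Your alternative, that both sides are holomorphic where the right-hand side converges, does not apply there, because that convergence set is not open near $\mathbb{R}^-$. The paper's proof has the same limitation (its tail estimate uses $p_2<0$). Only the case $\mr{Im}(p)\neq 0$ is used afterwards, in Lemma \ref{lem_IPP_n}.
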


\begin{proof}
Consider a point $p\in \mathbb{C}\setminus \mathbb{R}^+$ with $\mr{Im}(p)\leq 0$ and a vector $v$ with $\mr{Im}(v)>0$ and $\mr{Re}(pv)>0$.
For every point $w_1\in \ms{C}^+$ far enough from $w_0$, the half-line $w_1+\mathbb{R}^+v$ does not cross $\ms{C}$ away from $w_1$, and we can compute $\mc{L}_{w_0}g(p)$ on $\Gamma_{w_0,w_1,v}$.
For convenience, denote by $\ms{C}^+(w_0,w_1)$ the segment of curve on $\ms{C}^+$ between $w_0$ and $w_1$, and by $\ms{C}^+(w_1,\infty)$ the semi-infinite piece of curve on $\ms{C}^+$ starting from $w_1$.
The curve $\Gamma_{w_0,w_1,v}$ can be chosen so that the curve joining $w_0$ and $w_1$ is exactly $\ms{C}^+(w_0,w_1)$, which gives the following expression for $\mc{L}_{w_0}g(p)$:
\[
\begin{aligned}
\mc{L}_{w_0}g(p) &= \int_{w\in\ms{C}^+(w_0,w_1)} g(w)\mr{e}^{-pw}dw + \int_{w\in w_1+\mathbb{R}^+v} g(w)\mr{e}^{-pw}dw\\
&=: I_1(w_1) + I_2(w_1).
\end{aligned}
\]
We want to show that the integral $I_2(w_1)$ tends to zero when $w_1$ tends to infinity on $\ms{C}^+$, and that $I_1(w_1)$ tends to $\int_{\ms{C}^+}g(w)\mr{e}^{-pw}dw$, which amounts to say that
\[
I_3(w_1) := \int_{w\in\ms{C}^+(w_1,\infty)}g(w)\mr{e}^{-pw}dw
\]
tends to zero.

We estimate the integral $I_2$ as always 
\[
\begin{aligned}
|I_2| &\leq \int_{t=0}^{\infty} \|g\|_H \mr{e}^{-\mr{Re}(pw_1+ptv)}|v|dt\\
&\leq \|g\|_{H} \frac{\mr{e}^{-\mr{Re}(pw_1)}|v|}{\mr{Re}(pv)}.
\end{aligned}
\]
Writing $p=p_1+ip_2$ and $w_1 = x + i y$ with $p_2,x\leq 0$ and $y>0$, we get $\mr{Re}(pw_1) = p_1x - p_2 y$.
Since $x \sim -k\mr{log}(1+ y)$ for $w_1\in \ms{C}^+$, we already get $\mr{Re}(pw_1) \rightarrow +\infty$ when $p_2\neq 0$.
If $p_2=0$, then $p\in \mathbb{R}^{-*}$ and $p_1x \rightarrow +\infty$ when $w\to \infty$ on $\ms{C}^+$.
In any case, we got $I_2(w_1) \rightarrow 0$ when $w_1\to \infty$.

To estimate the integral $I_3$, we need to write it explicitely:
\[
I_3(w_1) = \int_{y=y_1}^{\infty} g(w)\mr{e}^{-p(-a-k\mr{log}(1+y))-ip y} \left( -\frac{k d y}{1+y} +id y \right).
\]
For $y_1$ big enough, a straightforward estimation gives:
\[
|I_3(w_1)| \leq \|g\|_\infty \int_{y = y_1}^\infty \mr{e}^{ap_1}(1+y)^{kp_1} \mr{e}^{p_2 y} (k+1) d y;
\]
since $p_2<0$, this integral is finite and tends to zero when $y_1$ tends to infinity, ending the proof of the lemma.
\end{proof}

\begin{lem}
\label{lem_IPP_n}
Suppose that $H\subset \{\mr{Re}(w)\leq -1\}$, and $\ms{C}=\{x+iy\in \mathbb{C} \:|\: x=-a-k\mr{log}(1+|y|)\}\subset H$.
Put $w_0= \ms{C}\cap \mathbb{R}$.
For any bounded holomorphic function $g$ on $H$, any $p\in \mathbb{C}$ with $\mr{Re}(p)>0$ and any $n\in \mathbb{N}$ with $n\geq k\mr{Re}(p)$, we have the estimate 
\[
| I^n\mc{L}_{w_0}g(p) | \leq \frac{(k+1)\mr{e}^{an/k}2^{n}\|g\|_{\infty}}{|\mr{Im}(p)|}.
\]
Moreover, the function $I^n\mc{L}_{w_0}g$ can be continuously extended to the real interval $[0, (n-1)/k[$ from above and from below, giving two continuous functions $(I^n\mc{L}_{w_0}g)^+(\beta)$, $(I^n\mc{L}_{w_0}g)^-(\beta)$.
For $k\mr{Re}(p)\leq n-2$, the following bound is also valid for the extensions to $[0,(n-2)/k]$:
\[
|(I^n\mc{L}_{w_0}g)^\pm(p)| \leq (k+1)\mr{e}^{a \frac{n-2}{k}}2^{\frac{n-2}{k}+1}\|g\|_{\infty}.
\]
The difference between these two extensions is given by the equation 
\[
(I^n\mc{L}_{w_0}g)^+(\beta) - (I^n\mc{L}_{w_0}g)^-(\beta) = \int_{w\in\ms{C}} \frac{g(w)}{w^n}\mr{e}^{-\beta w} dw.
\]
\end{lem}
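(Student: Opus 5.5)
The plan is to identify $I^n\mc{L}_{w_0}g$ with a Laplace transform, iterating Lemma \ref{lem_ILg}. Since $H\subset\{\mr{Re}(w)\leq -1\}$, each function $g(w)/w^j$ is bounded on $H$ by $\|g\|_\infty$. Applying Lemma \ref{lem_ILg} $n$ times gives
\[
I^n\mc{L}_{w_0}g(p) = (-1)^n\mc{L}_{w_0}\left(\frac{g(w)}{w^n}\right)(p).
\]
Lemma \ref{lem_laplace_courbe_log} applies to the bounded function $g/w^n$. So for $\mr{Im}(p)\leq 0$ (resp. $\geq 0$) this equals $(-1)^n\int_{\ms{C}^+}$ (resp. $\int_{\ms{C}^-}$) of $g(w)w^{-n}\mr{e}^{-pw}\,dw$. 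Everything then reduces to estimating these explicit integrals along the logarithmic curve. The sign $(-1)^n$ is harmless for the bounds, and I will track it, together with orientation, for the jump formula.

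For the first estimate, parametrize $\ms{C}^+$ by $w=-a-k\log(1+y)+iy$. Then $|dw|\leq (k+1)dy$ and $|w|\geq \max(1,\,a+k\log(1+y),\,y)$. The exponential factor is
\[
|\mr{e}^{-pw}| = \mr{e}^{ap_1}(1+y)^{kp_1}\mr{e}^{p_2y}, \qquad p=p_1+ip_2 .
\]
Using $|w|\geq (1+y)/2$ (valid up to adjusting constants since $|w|\geq 1$ and $|w|\geq y$), the integrand is bounded by
\[
(k+1)\|g\|\,2^n\mr{e}^{ap_1}(1+y)^{kp_1-n}\mr{e}^{-|p_2|y}.
\]
When $n\geq kp_1$ the power of $1+y$ is at most $1$, and integrating $\mr{e}^{-|p_2|y}$ gives $1/|p_2|$. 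Finally $\mr{e}^{ap_1}\leq \mr{e}^{an/k}$ because $p_1\leq n/k$, which yields the stated bound.

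For the extensions to real $\beta\in[0,(n-1)/k[$, I set $p_2=0$ in the same integral. The integrand is then bounded by $C(1+y)^{k\beta-n}$, which is integrable exactly when $k\beta-n<-1$. Dominated convergence therefore gives continuity of the $\ms{C}^\pm$ integrals up to $\mr{Im}(p)=0$, locally uniformly in $\mr{Re}(p)$. This defines $(I^n\mc{L}_{w_0}g)^\pm$.

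For $k\mr{Re}(p)\leq n-2$ the exponent is at most $-2$, so $\int_0^\infty(1+y)^{-2}dy=1$. This gives a bound of the form $(k+1)\mr{e}^{a(n-2)/k}\cdot(\text{power of }2)\cdot\|g\|$. I expect the main friction to be matching the exact power of $2$ written in the statement. The fix is a sharper lower bound for $|w|$: on the range where the real and imaginary parts compete, use $|w|\geq\max(1,y)$ together with $(1+y)\leq 2\max(1,y)$, and absorb only the needed number of factors $2$. I will not insist on the precise constant beyond this.

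For the jump, the value from above uses $\ms{C}^-$ and the value from below uses $\ms{C}^+$. Their difference is therefore, up to orientation and the sign $(-1)^n$, the integral over the whole curve $\ms{C}=\ms{C}^-\cup\ms{C}^+$, oriented consistently through $w_0$. This integral converges absolutely for $\beta<(n-1)/k$ by the bound above, and the sign conventions combine to give the stated formula. The main obstacle is this bookkeeping of orientation and sign, together with checking that the choice between $\ms{C}^\pm$ in Lemma \ref{lem_laplace_courbe_log} matches the half-plane being approached. I would verify it on the test case $g\equiv$ const.
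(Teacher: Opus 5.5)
Your proposal is the paper's own proof, step for step. You iterate Lemma \ref{lem_ILg} to get $I^n\mc{L}_{w_0}g=(-1)^n\mc{L}_{w_0}(g/w^n)$, write this as an integral over $\ms{C}^{\pm}$ via Lemma \ref{lem_laplace_courbe_log}, bound the parametrized integral, let $\mr{Im}(p)\to 0$ by dominated convergence, and subtract the two one-sided integrals. Your lower bound $|w|\geq(1+y)/2$ gives the first stated constant $2^n$ exactly. There are two small corrections, and the paper's own proof shares both.

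First, your proposed repair for the power of $2$ in the second bound does not reach the stated constant. Using $|w|\geq\max(1,y)$ and $1+y\leq 2\max(1,y)$ gives only $2^{k\mr{Re}(p)+1}\leq 2^{n-1}$. This is also all the paper's computation yields, namely $2^{kp_1}\int_0^\infty\frac{dt}{1+t^2}$. It is dominated by $2^{\frac{n-2}{k}+1}$ on the whole range $k\mr{Re}(p)\leq n-2$ only when $k\leq 1$. To get the stated exponent you have to use the real part of $w$. On $\ms{C}$ one has $|\mr{Re}(w)|=a+k\log(1+|y|)\geq 1+k\log(1+|y|)$, since $a\geq 1$. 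An elementary check then gives $1+|y|\leq 2^{1/k}|w|$. Hence for $k\mr{Re}(p)\leq n-2$ the factor $(1+|y|)^{kp_1}|w|^{-n}$ is at most $2^{p_1}|w|^{-2}\leq 2^{p_1}/(1+y^2)$. Integrating gives the constant $2^{p_1}\frac{\pi}{2}\leq 2^{\frac{n-2}{k}+1}$.

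Second, the signs do not combine to give the jump formula as stated. The one-sided values are $(I^n\mc{L}_{w_0}g)^{\pm}(\beta)=(-1)^n\int_{\ms{C}^{\mp}}\frac{g(w)}{w^n}\mr{e}^{-\beta w}dw$, with both half-curves run away from $w_0$. So the jump is $(-1)^n\int_{\ms{C}}\frac{g(w)}{w^n}\mr{e}^{-\beta w}dw$ with $\ms{C}$ traversed downward. This alternating sign cannot be absorbed into an $n$-independent orientation of $\ms{C}$. Your proposed test case shows it: for $g\equiv 1$, residues give $\pm 2i\pi\frac{(-\beta)^{n-1}}{(n-1)!}$ for the integral over $\ms{C}$, whose sign alternates with $n$, while the actual jump does not.
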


\begin{proof}
Write $p=p_1 + ip_2$.
From Lemma \ref{lem_ILg} we have
\[
I^n\mc{L}_{w_0}g = (-1)^n \mc{L}_{w_0} \left( \frac{g(w)}{w^n} \right),
\]
note in particular that for $w\in H$, we have $|w|\geq 1$.
Suppose that $p_2<0$ (the case $p_2>0$ can be treated similarly), we can use Lemma \ref{lem_laplace_courbe_log} :
\[
\begin{aligned}
|I^n\mc{L}_{w_0}g(p)| &= \left| \int_{\ms{C}^+} \frac{g(w)}{w^n}\mr{e}^{-pw}dw \right|\\
&\leq \int_{t=0}^{\infty} \|g\|_{\infty} \mr{e}^{ap_1}\frac{(1+t)^{kp_1}}{|\rho(t)+it|^n}\mr{e}^{p_2t} \left| \frac{k}{1+t} + i \right| dt\\
&\leq \int_{t=0}^{\infty} (k+1)\mr{e}^{ap_1}\|g\|_{\infty} \frac{(1+t)^{kp_1}}{(1+t^2)^{\frac{n}{2}}}\mr{e}^{p_2t}dt\\
&\leq (k+1)\mr{e}^{ap_1}\frac{\|g\|_{\infty}2^{kp_1}}{|p_2|}.
\end{aligned}
\]
We used the facts that $(1+t)^2\leq 2(1+t^2)$ and $n\geq kp_1$ to conclude.
This gives the sought estimate for $kp_1\leq n$.
When $n-kp_1>1$, the function 
\[
t\mapsto \frac{(1+t)^{kp_1}}{(1+t^2)^{\frac{n}{2}}}
\]
is integrable, and the limit of $I^n\mc{L}g(p)$ when $p_2$ tends to zero is given by the dominated convergence theorem:
\[
I^n\mc{L}g(p) \underset{p_2\to 0}{\longrightarrow} \int_{w\in \ms{C}^+} \frac{g(w)}{w^n} \mr{e}^{-p_1w}dw.
\]
Denote by $(I^n\mc{L}g)^-$ the limit, as in the statement of this lemma; we see that this function is continuous on $[0,(n-1)/k[$.
We also get the following estimate for $p_1\leq (n-2)/k$:
\[
|I^n\mc{L}g(p)| \leq (k+1)\mr{e}^{ap_1}2^{kp_1}\|g\|_{\infty} \int_{\mathbb{R}^+}\frac{1}{1+t^2}dt.
\]

The same proof for $p_2>0$ gives the expression 
\[
(I^n\mc{L}g)^+(\beta) = \int_{w\in \ms{C}^-} \frac{g(w)}{w^n}\mr{e}^{-\beta w} dw,
\]
so that 
\[
(I^n\mc{L}g)^+(\beta) - (I^n\mc{L}g)^-(\beta) = \int_{w\in \ms{C}} \frac{g(w)}{w^n}\mr{e}^{-\beta w} dw.
\]
\end{proof}

\subsection{Diagonal integration by parts}
\label{sec_DIPP}

Fix a logarithmic neighborhood $H\in \ms{LN}_{a,k}(-\infty)$, a point $w_0$ to define the Laplace transform (we will omit the mention to it in this section to alleviate notations).

Consider the paths $\gamma_{v}=R(-\varepsilon,v)$ and $\gamma_{\bar{v}}=R(-\varepsilon,\bar{v})$ defining the inverse Laplace transform (the parameter $\varepsilon>0$ will be irrelevant, for convenience we can consider that $-\varepsilon = -0.5/k$).
Introduce for $n\geq 1$ the part $\gamma_{v,n}$ of the path $\gamma_{v}$ inside the strip $\{\frac{n-0.5}{k}\leq \mr{Re}(p) \leq \frac{n+0.5}{k}\}$, and the part $\gamma_{v,0}$ inside the strip $\{-\varepsilon \leq \mr{Re}(p)\leq \frac{1}{2k}\}$.
Introduce also the border points $p_n, p_{n+1}$ of the path $\gamma_{v,n}$ (we will omit the dependance of $p_n$ on $v$ at first, but write them $p_{v,n}$ when this dependance becomes relevant).
The border points for the path $\gamma_{\bar{v},n}$ are just $\overline{p_n}$.
Doing $n+3$ integration by parts, we get for $n\geq 0$:

\begin{equation}
\label{eq_IPP_n}
\begin{aligned}
\int_{p\in \gamma_{v,n}} \mc{L}g(p)\mr{e}^{wp}dp &= (I\mc{L}g)(p_{n+1})\mr{e}^{wp_{n+1}} - (I\mc{L}g)(p_n)\mr{e}^{wp_n} - \int_{p\in \gamma_{v,n}} (I\mc{L}g)(p)w\mr{e}^{wp}dp\\
&= \ldots \\
&= \sum_{r=0}^{n+2} (-1)^r \left[ (I^{r+1}\mc{L}g)(p_{n+1})w^r\mr{e}^{wp_{n+1}} - (I^{r+1}\mc{L}g)(p_n)w^r\mr{e}^{wp_n} \right]\\
&\quad + (-1)^{n+3}\int_{p\in \gamma_{v,n}} (I^{n+3}\mc{L}g)(p)w^{n+3}\mr{e}^{wp}dp.
\end{aligned}
\end{equation}

Let us denote the border terms 
\[
BT_n^+(w) := \sum_{r=0}^{n+2}(-1)^r (I^{r+1}\mc{L}g)(p_{n+1})w^r\mr{e}^{wp_{n+1}}
\]
\[
BT_n^-(w) := \sum_{r=0}^{n+2}(-1)^r (I^{r+1}\mc{L}g)(p_{n})w^r\mr{e}^{wp_n},
\]
so that 
\[
\int_{p\in \gamma_{v,n}} \mc{L}g(p)\mr{e}^{wp}dp = BT_n^+(w)-BT_n^-(w) + (-1)^{n+3} \int_{p\in \gamma_{v,n}} (I^{n+3}\mc{L}g)(p)w^{n+3}\mr{e}^{wp}dp.
\]

Whenever the integral $\int_{p\in \gamma_{v}} \mc{L}g(p)\mr{e}^{wp}dp$ converges, we have 
\[
\int_{p\in \gamma_{v}} \mc{L}g(p)\mr{e}^{wp}dp = \sum_{n=0}^\infty \int_{p\in \gamma_{v,n}} \mc{L}g(p)\mr{e}^{wp}dp.
\]
Note that in the sum $\int_{\gamma_{v,n}} \mc{L}g(p)\mr{e}^{wp}dp + \int_{\gamma_{v,n+1}} \mc{L}g(p)\mr{e}^{wp}dp$ we have some cancellations at the point $p_{n+1}$:
\[
\begin{aligned}
BT_n^+(w) - BT_{n+1}^-(w) &= \sum_{r=0}^{n+2} (-1)^r (I^{r+1}\mc{L}g)(p_{n+1}) w^r\mr{e}^{wp_{n+1}}\\
 &\quad - \sum_{r=0}^{n+3} (-1)^{r} (I^{r+1}\mc{L}g)(p_{n+1})w^r\mr{e}^{wp_{n+1}}\\
&= (-1)^{n+4} (I^{n+4}\mc{L}g)(p_{n+1})w^{n+3}\mr{e}^{wp_{n+1}}.
\end{aligned}
\]
We will denote by $BT_{n+1}^0(w) = BT_n^+(w)-BT_{n+1}^-(w)$ these pre-cancelled border terms.

Following these computations, we introduce the diagonal extraction of integration by parts, or "diagonal integration by parts" for short
\[
\begin{aligned}
I_{v}^{\Delta'}(\mc{L}g,\mr{e}^{wp}) &:= \sum_{n=0}^{\infty} (-1)^{n+3}\int_{p\in \gamma_{v,n}} (I^{n+3}\mc{L}g)(p)w^{n+3}\mr{e}^{wp}dp \\
& \quad\quad- BT_0^-(w) 
+ \sum_{n=1}^{\infty} BT_{n}^0(w),
\end{aligned}
\]
and
\[
I_{v}^{\Delta}(\mc{L}g,\mr{e}^{wp}) = I_{\bar{v}}^{\Delta'}(\mc{L}g,\mr{e}^{wp}) -  I_{v}^{\Delta'}(\mc{L}g,\mr{e}^{wp}).
\]
Note already that the border terms $BT_0^-(w)$ cancel themselves in the difference.

\begin{lem}
\label{lem_IPP_CV}
For every angle $\theta_0$ and every $\theta < \frac{\pi}{2} - \theta_0$, there is a cone $C=C_-(w_0,\theta)$ such that the series defining the diagonal integration by parts $I_{v}^{\Delta}(\mc{L}g,\mr{e}^{wp})$ converges normally for every $w\in C$ and any vector $v$ with $\mr{Re}(v)>0$ and $0<|\mr{Arg}(v)|<\theta_0$.
\end{lem}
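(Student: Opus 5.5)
The plan is to bound separately the two kinds of terms in the series defining $I_{v}^{\Delta'}$: the remainder integrals on the pieces $\gamma_{v,n}$ and the pre-cancelled border terms $BT^0_{n}(w)$. In each case the goal is a bound of the form $K\,q^n$ with $q<1$, uniform in $w\in C$ and in $v$ with $0<|\mathrm{Arg}(v)|<\theta_0$. The same argument handles $\bar v$, and normal convergence of the difference $I_v^{\Delta}$ then follows.

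\textbf{Bounds on the iterated primitives.} Everything rests on Lemma \ref{lem_IPP_n}. On $\gamma_{v,n}$ we have $\mathrm{Re}(p)\leq (n+0.5)/k$, so $k\,\mathrm{Re}(p)\leq n+3-2$ and the bound applies to $I^{n+3}\mathcal{L}g$. It gives
\[
|(I^{n+3}\mathcal{L}g)(p)|\leq (k+1)\mathrm{e}^{a(n+1)/k}2^{(n+1)/k+1}\|g\|_\infty
\]
uniformly near the real axis. Away from the axis it gives the bound with $1/|\mathrm{Im}(p)|$, which is harmless because $|\mathrm{Im}(p)|\geq \tfrac{n-0.5}{k}\tan|\mathrm{Arg}(v)|$ for $n\geq1$. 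The case $n=0$, where $\gamma_{v,0}$ starts at $-\varepsilon$, is a single term and I would treat it directly. The border term $BT^0_{n+1}$ involves $I^{n+4}\mathcal{L}g$ evaluated at $p_{n+1}$, with $k\,\mathrm{Re}(p_{n+1})=n+0.5\leq n+2$, so the same estimate applies. Both kinds of terms are therefore bounded by $K\,\mathrm{e}^{an/k}2^{n/k}$ times the size of $|w^{n+3}\mathrm{e}^{wp}|$ on the piece, multiplied by the length of $\gamma_{v,n}$, which is at most $1/(k\cos\theta_0)$.

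\textbf{Geometric decay.} The main point is to choose $w_0$ very negative so that $|w|^{n+3}\mathrm{e}^{\mathrm{Re}(wp)}$ decays geometrically for $\mathrm{Re}(p)\approx n/k$. For $w\in C_-(w_0,\theta)$ with $\theta<\frac{\pi}{2}-\theta_0$ and $p=-\varepsilon+tv$, write $w=w_0+w_{01}$. Then
\[
\mathrm{Re}(wp)\leq w_0\,\mathrm{Re}(p)+\mathrm{Re}(w_{01}p)+O(1).
\]
The hypothesis on the angles makes $\mathrm{Re}(w_{01}tv)\leq -c\,|w_{01}|\,t$ for some $c>0$ depending only on $\theta+\theta_0<\pi/2$. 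Hence on $\gamma_{v,n}$ we get
\[
|w|^{n+3}\,|\mathrm{e}^{wp}|\lesssim (|w_0|+|w_{01}|)^{n+3}\,\mathrm{e}^{(w_0-c|w_{01}|)n/k}.
\]
I would bound this by $\sup_{s\geq0}(|w_0|+s)^{n+3}\mathrm{e}^{-(|w_0|+cs)n/k}$. Since $x^{m}\mathrm{e}^{-\lambda x}$ is decreasing for $x>m/\lambda$, the bound becomes roughly $(|w_0|\mathrm{e}^{-|w_0|c'/k})^{n}$ once $|w_0|$ is large compared with $k/c$. This polynomial-versus-exponential competition, which must hold uniformly in $n$ and in $w$ along the whole cone, is where I expect the main obstacle. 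In particular the factor $|w|^{n+3}$ grows with $|w_{01}|$, and one must use that $\mathrm{Re}(p)\geq (n-0.5)/k$ on the piece to get exponential decay in $|w_{01}|$ at the rate $n$.

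\textbf{Conclusion.} Combining the two steps, the $n$-th term is bounded by $K\bigl(\mathrm{e}^{a/k}2^{1/k}\,|w_0|\,\mathrm{e}^{-c'|w_0|/k}\bigr)^n$. Choosing $w_0$ negative enough that the ratio is less than $1/2$ gives normal convergence on $C$, uniformly in $v$ in the stated sector.
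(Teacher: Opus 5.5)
Your proof follows the paper's argument. First comes the uniform bound of Lemma \ref{lem_IPP_n}, which applies since $k\,\mathrm{Re}(p)\le n+1$ on $\gamma_{v,n}$ and $k\,\mathrm{Re}(p_{n+1})\le n+2$ for the border terms $BT^0_{n+1}$ (the paper leaves the border terms implicit). Then comes geometric decay of $|w|^{n+3}\mathrm{e}^{\mathrm{Re}(wp)}$ for $\mathrm{Re}(p)\approx n/k$, from the competition between $k\log|w|$ and linear decay on a cone whose vertex is pushed far left; your $\sup_s$ computation is the exponentiated form of the paper's $\sup_{|w|>1}(k\log|w|-\tilde{c}|w|)$.

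One small correction is needed. Because the ray starts at $-\varepsilon$, $\mathrm{Re}(w_{01}p)$ contains the term $-\varepsilon\,\mathrm{Re}(w_{01})=\varepsilon|\mathrm{Re}(w_{01})|$, which is not $O(1)$. So either $K$ must depend on $w$ (the paper keeps the factor $|w|^3\mathrm{e}^{-0.5\,\mathrm{Re}(w)/k}$ outside the geometric series), or your uniform bound only holds for $n\geq 1/c$. This does not affect normal convergence at each $w\in C$, which is all the lemma asks. But your claimed bound $Kq^n$, uniform over the whole cone, is false for the first few terms.
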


\begin{proof}
The integral along $\gamma_{v,0}$ can easily be bounded independently; note that in every other term, the only points $p$ considered satisfy $\mr{Re}(p)\geq k/2$, and appear in an exponential or in $I^n\mc{L}g(p)$ for $n\geq k\mr{Re}(p) + 2$.
We can use the estimate of Lemma \ref{lem_IPP_n}:
\[
\begin{aligned}
|I^n\mc{L}g(p)| &\leq 2(k+1)\mr{e}^{(a+\mr{log}(2))\frac{n-2}{k}}\|g\|_{H}\\
&\leq C \mr{e}^{\tilde{a}\frac{n-2}{k}}\|g\|_H.
\end{aligned}
\]
If we denote by $t = \mr{tan} \left( \mr{Arg}(v) \right)$, we obtain the following bound inside the integral on $\gamma_{v,n}$:
\[
\begin{aligned}
|(I^{n+3}\mc{L}g)(p)w^{n+3}\mr{e}^{wp}| &\leq C\|g\|_H\mr{e}^{\tilde{a}\frac{n+1}{k}}|w|^{n+3}\mr{e}^{\mr{Re}(wp)}\\
&\leq C\|g\|_H \mr{e}^{\tilde{a}\frac{n+1}{k}}|w|^{n+3}\mr{e}^{-0.5\mr{Re}(w)/k}\mr{e}^{(p_1+0.5/k)(\mr{Re}(w)-t\mr{Im}(w))}\\
&\leq C\|g\|_H\mr{e}^{\tilde{a}\frac{n+1}{k}}|w|^{n+3}\mr{e}^{-0.5\mr{Re}(w)/k} \mr{e}^{\frac{n}{k} (\mr{Re}(w)-t \mr{Im}(w))}\\
&\leq C\|g\|_H\mr{e}^{\tilde{a}/k}|w|^3\mr{e}^{-0.5\mr{Re}(w)/k} \\
&\quad\quad \times \mr{exp} \left[ \frac{n}{k} \left( \tilde{a} + k\:\mr{log}(|w|) + \mr{Re}(w)-t\mr{Im}(w) \right) \right].
\end{aligned}
\]
(We used $\mr{Re}(w)-t\mr{Im}(w) < 0$ in these inequalities.)
We obtain the following necessary condition on $w$ for the convergence of the series\::
\begin{equation}
\label{eq_IPP_CV}
\mr{Re}(w)-t\mr{Im}(w) + k\:\mr{log}(|w|) + \tilde{a} <0.
\end{equation}
Note that in the cone $C=C_-(w_0,\mr{Arctan}(c))$, we have for every $w=w_0+w_{01}\in C$ 
\[
\mr{Re}(w)-t\mr{Im}(w) \leq w_0 +\mr{Re}(w_{01}) + |t||\mr{Im}\:w_{01}| \leq w_0 + (1-c|t|)\mr{Re}(w_{01}),
\]
and
\[
|w_{01}| \leq \sqrt{1+c^2} |\mr{Re}\:w_{01}|.
\]
Introducing $\displaystyle \tilde{c} =  \frac{1-c|t|}{\sqrt{1+c^2}}$ and $\displaystyle \hat{c} = \frac{1}{\sqrt{1+c^2}}$, we get in the case $\tilde{c}>0$
\[
\begin{aligned}
\mr{Re}(w)-t\:\mr{Im}(w)+k\mr{log}|w| &\leq w_0 - \frac{1-c|t|}{\sqrt{1+c^2}}|w_{01}| + k\:\mr{log}|w|\\
&\leq (1-\tilde{c})w_0 - \tilde{c}(|w_0|+|w_{01}|) + k\:\mr{log}|w|\\
&\leq (1-\hat{c})w_0 +\mr{sup}_{|w|>1} (k\:\mr{log}|w|-\tilde{c}|w|).
\end{aligned}
\]
So for every $t>0$ we can choose any $c< t^{-1}$; denoting by
\[
s = \mr{sup}_{|w|>1} (k\:\mr{log}|w|-\tilde{c}|w|) = k\:\mr{log}(k)-k - k\:\mr{log}(\tilde{c}),
\]
we have the sought result for any $w_0< \frac{-\tilde{a}-s}{1-\hat{c}}$.
Note that the function $\tilde{c}(t)$ increases as $t$ decreases (with $c$ constant), so the function $s(t)$ decreases and the equation \eqref{eq_IPP_CV} is satisfied in the cone $C$ uniformly for every $t'<t$.
\end{proof}

\begin{lem}
For every direction $v$, we have
\[
\int_{\gamma_{v}} \mc{L}g(p)\mr{e}^{wp}dp = I_{v}^{\Delta}(\mc{L}g,\mr{e}^{wp})
\]
whenever both are defined.
\end{lem}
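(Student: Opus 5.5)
The plan is to compare partial sums. I read the $I_v^{\Delta}$ in the statement as the one-sided quantity $I_v^{\Delta'}$, since the left-hand side is an integral along the single ray $\gamma_v$; the version with $\gamma_{\bar v}$ is identical.

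\textbf{Step 1 (telescoping).} Since $\gamma_v=\bigcup_n\gamma_{v,n}$, we have $\int_{\gamma_v}\mc{L}g(p)\mr{e}^{wp}dp=\lim_N\sum_{n=0}^N\int_{\gamma_{v,n}}\mc{L}g(p)\mr{e}^{wp}dp$. Expand each term with \eqref{eq_IPP_n} as $BT_n^+-BT_n^-+(-1)^{n+3}\int_{\gamma_{v,n}}(I^{n+3}\mc{L}g)w^{n+3}\mr{e}^{wp}dp$. Grouping $BT_{n-1}^+-BT_n^-=BT_n^0$ gives
\[
\sum_{n=0}^N\int_{\gamma_{v,n}}\mc{L}g(p)\mr{e}^{wp}dp = \Big[\sum_{n=0}^N(-1)^{n+3}\int_{\gamma_{v,n}}(I^{n+3}\mc{L}g)(p)w^{n+3}\mr{e}^{wp}dp - BT_0^-(w)+\sum_{n=1}^N BT_n^0(w)\Big] + BT_N^+(w).
\]
The bracket is the $N$-th partial sum of $I_v^{\Delta'}(\mc{L}g,\mr{e}^{wp})$. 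It therefore suffices to show that $BT_N^+(w)\to0$.

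\textbf{Step 2 (reduction to an open set).} Wherever both sides are defined, the left side converges and, by Lemma \ref{lem_IPP_CV}, the DIPP series converges normally. Hence $BT_N^+(w)$ converges pointwise to the difference $D(w)$ of the two sides. Both sides are holomorphic in $w$: the integral by the usual arguments, and the DIPP series because it is a normally convergent series of holomorphic functions. So $D$ is holomorphic on the connected common domain, and it suffices to prove $D=0$ on a nonempty open set. I will take a cone $C_-(w_*,\theta')$ with $w_*$ very negative, on which I show directly that $BT_N^+(w)\to0$.

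\textbf{Step 3 (estimating the border terms; the main obstacle).} By definition,
\[
BT_N^+(w)=\sum_{r=0}^{N+2}(-1)^r(I^{r+1}\mc{L}g)(p_{N+1})w^r\mr{e}^{wp_{N+1}},
\]
where $\mr{Re}(p_{N+1})\approx(N+0.5)/k$ and $\mr{Im}(p_{N+1})\approx t\,\mr{Re}(p_{N+1})$ with $t=\mr{tan}(\mr{Arg}\,v)\neq0$. I split the sum according to the size of $r$.
\begin{itemize}
\item For $r+1\geq k\,\mr{Re}(p_{N+1})$, Lemma \ref{lem_IPP_n} gives $|I^{r+1}\mc{L}g(p_{N+1})|\leq C\mr{e}^{\tilde a (r+1)/k}/|\mr{Im}\,p_{N+1}|$.
\item For the remaining small $r$, I use $I^{r+1}\mc{L}g=(-1)^{r+1}\mc{L}(g/w^{r+1})$ from Lemma \ref{lem_ILg}. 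Since $\|g/w^{r+1}\|_H\le\|g\|_H$ because $|w|\ge1$ on $H$, Proposition \ref{prop_Lg} applied at the fixed angle $\mr{Arg}\,v$ gives $|I^{r+1}\mc{L}g(p)|\leq 4\|g\|_H\mr{e}^{M\mr{Re}(p)}/N(p)$ with $M=\mu(\mr{Arg}\,v)$ independent of $r$.
\end{itemize}
In both cases, using $|w|^r\leq\max(1,|w|)^{N+2}$ and $\mr{Re}(wp_{N+1})\approx\frac{N}{k}(\mr{Re}\,w-t\,\mr{Im}\,w)$, each term is bounded by a constant times
\[
|w|^2\,\exp\Big[\frac{N}{k}\big(M'+k\,\mr{log}|w|+\mr{Re}(w)-t\,\mr{Im}(w)\big)\Big],
\]
where $M'=\max(M,\tilde a)$. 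The obstacle is that the terms with small $r$ are not covered by Lemma \ref{lem_IPP_n}, so I have to use the cruder, $r$-uniform growth bound $M$ and check that it is still beaten by the exponential factor.

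\textbf{Step 4 (choice of cone and conclusion).} Exactly as in the proof of Lemma \ref{lem_IPP_CV}, with $\tilde a$ replaced by $M'$, there is a cone $C_-(w_*,\theta')$ on which
\[
\mr{Re}(w)-t\,\mr{Im}(w)+k\,\mr{log}|w|+M'\leq-\delta<0 .
\]
On this cone each of the $N+3$ terms of $BT_N^+(w)$ is $O(\mr{e}^{-\delta N/k})$. The polynomial factor $N+3$ is absorbed by this geometric decay, so $BT_N^+(w)\to0$. Hence $D=0$ on this open cone, and by the analytic continuation of Step 2, $D=0$ on the whole common domain, which proves the identity.
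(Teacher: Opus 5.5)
Your proof is correct and follows the paper's argument. Both reduce, via \eqref{eq_IPP_n}, to controlling the border terms, bound $I^{r+1}\mc{L}g=(-1)^{r+1}\mc{L}(g/w^{r+1})$ uniformly in $r$ using $|w|\geq 1$, obtain decay on a cone from $\mr{Re}(w)-t\,\mr{Im}(w)+k\,\mr{log}|w|+M'<0$, and conclude by analytic continuation. Your telescoping is slightly sharper, since it needs only $BT_N^+(w)\to 0$ rather than absolute convergence of $\sum BT_n^\pm$, and the split in $r$ in Step 3 is unnecessary because the $r$-uniform crude bound (all the paper uses) already covers every $r$.
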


\begin{proof}
In view of the formula for the integration by parts \eqref{eq_IPP_n}, and of the convergence of $I_v^{\Delta}(\mc{L}g,\mr{e}^{wp})$ (Lemma \ref{lem_IPP_CV}), we only need to prove the convergence of the series of boundary terms $\sum |BT_n^+(w)|$ and $\sum |BT_n^-(w)|$.
We will estimate the first sum along the path $\gamma_{v}$, the other terms can be estimated similarly.
For convenience, we consider $v=\mr{e}^{i \theta}/\mr{cos}(\theta)$ so that $p_n=(nv-0.5)/k$.

We can write $I^n\mc{L}g =\mc{L}( \frac{g(w)}{w^n})$ and use $|w|\geq 1$ to estimate (without any regard for subtelty)
\[
|I^n\mc{L}g(p)| \leq \|g\|_H \frac{\mr{e}^{\mu \mr{Re}(p)}}{N(p)}
\] 
for any $p$ on the path $\gamma_{v}$, and some constant $\mu>0$.
A bit farther on the path of non-subtelty, we find the estimate $\mr{Im}(p_n)\geq k\:\mr{tan}(\theta)/2$, which gives 
\[
\begin{aligned}
\sum_{n\geq 1} |BT_n^+(w)| &= \sum_{n\geq 1} \sum_{r=0}^{n+2} |(I^{r+1}\mc{L}g)(p_{n+1})| |w|^r |\mr{e}^{wp_{n+1}}|\\
&\leq \sum_{n\geq 1}\sum_{r=0}^{n+2} \frac{2\|g\|_H}{k\:\mr{tan}(\theta)} |w|^r\mr{e}^{\mr{Re}(wv)(n+1)/k-0.5\mr{Re}(w)/k}\mr{e}^{\mu (n+0.5)/k}\\
&\leq \frac{2\|g\|_H}{k\:\mr{tan}(\theta)} \mr{e}^{\mu/(2k)}\mr{e}^{-\frac{\mr{Re}(w)}{2k}} \sum_{n\geq 1} \frac{|w|^{n+3}-1}{|w|-1}\mr{e}^{\frac{n}{k} \left( \mr{Re}(wv)-\mu \right)}.
\end{aligned}
\]
Once again, we find that this series converges under the hypothesis 
\[
\mr{Re}(wv)-\mu + k\:\mr{log}(w) < 0;
\]
we deduce as in Lemma \ref{lem_IPP_CV} that this sum converges for $w$ in some cone.

We conclude by analytic continuation.
\end{proof}

In particular, the diagonal integration by parts $I_v^{\Delta}(\mc{L}g,\mr{e}^{wp})$ does not depend on the vector $v$.
But the formula defining it admits a limit when $v$ tends to the real point $1$.

Introduce the continuous function
\[
\varphi_n(t) = (I^n\mc{L}g)^-(t)-(I^n\mc{L}g)^+(t)
\]
defined for $t\in [0,(n-2)/k]$ by Lemma \ref{lem_IPP_n}.
Introduce the limit of the diagonal integration by parts 
\[
I_1^{\Delta}(\mc{L}g,\mr{e}^{wt}) := \sum_{n=0}^{\infty} (-1)^{n+3}\int_{t=t_n}^{t_{n+1}} \varphi_{n+3}(t)w^{n+3}\mr{e}^{wt}dt + \sum_{n=1}^\infty BT_n^1(w),
\]
where the cutting points are $t_n = \frac{n-0.5}{k}$ and the boundary terms are 
\[
BT_n^1(w) = (-1)^{n+3} \varphi_{n+3}(t_n)w^{n+2}\mr{e}^{wt_n}.
\]

\begin{thm}
\label{thm_DIPP}
The series defining $I_1^{\Delta}(\mc{L}g,\mr{e}^{wt})$ converges normally in a logarithmic neighborhood $H'$, and for every $w\in H'$ we have 
\[
\mc{L}^{-1}\mc{L}g(w) = \frac{1}{2i\pi}I_1^{\Delta}(\mc{L}g,\mr{e}^{wt}).
\]
\end{thm}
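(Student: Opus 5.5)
The plan is to obtain $I_1^{\Delta}$ as the limit $v\to 1$ of $I_v^{\Delta}=I_{\bar v}^{\Delta'}-I_v^{\Delta'}$. The preceding lemma gives $I_v^{\Delta}(\mc{L}g,\mr{e}^{wp}) = 2i\pi\,\mc{L}^{-1}\mc{L}g(w)$, independently of $v$, on a cone where both sides are defined. Theorem \ref{thm_1} then identifies $\mc{L}^{-1}\mc{L}g$ with $g$ on $-1+H$. So it suffices to prove two things: that the series defining $I_1^{\Delta}$ converges normally on a logarithmic neighborhood $H'$, and that each term of $I_v^{\Delta}$ tends to the corresponding term of $I_1^{\Delta}$ as $\mr{Arg}(v)\to 0$, with domination uniform in $v$. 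The identity is first obtained on a cone and then extended to $H'$ by analytic continuation, since normal convergence makes $I_1^{\Delta}$ holomorphic on $H'$.

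\textbf{Termwise limits.} For $n\geq 1$ the paths $\gamma_{v,n}$ and $\gamma_{\bar v,n}$ lie in the strip $t_n\leq \mr{Re}(p)\leq t_{n+1}$ and collapse onto $[t_n,t_{n+1}]$ from below and from above. On this strip the integrand involves $I^{n+3}\mc{L}g$, and $k\,\mr{Re}(p)\leq n+0.5 \leq (n+3)-2$. Lemma \ref{lem_IPP_n} therefore shows that $I^{n+3}\mc{L}g$ extends continuously up to the real axis from both sides, with the uniform bound $C\mr{e}^{\tilde a (n+1)/k}\|g\|_H$. By dominated convergence,
\[
\int_{\gamma_{\bar v,n}}-\int_{\gamma_{v,n}} \longrightarrow \int_{t_n}^{t_{n+1}}\varphi_{n+3}(t)\,w^{n+3}\mr{e}^{wt}\,dt .
\]
The sign is fixed by the orientation convention of $\gamma_{\varepsilon,v}$ together with the definition of $\varphi_n$ as the lower extension minus the upper one. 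The border terms $BT_n^0$ of $\bar v$ minus those of $v$ converge in the same way to $(-1)^{n+3}\varphi_{n+3}(t_n)w^{n+2}\mr{e}^{wt_n}=BT_n^1(w)$, after checking the exponent of $w$ and the index shift. The initial piece $\gamma_{v,0}$ passes near $-\varepsilon$ and $0$. There I would use the integral representation of Lemma \ref{lem_IPP_n} (valid on $[0,(n-1)/k[$ for $n=3$), or directly the continuity from the lemma, to pass to the limit. The terms $BT_0^-$ cancel in the difference.

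\textbf{Uniform domination.} This repeats the estimate of Lemma \ref{lem_IPP_CV}, but with $t=\mr{tan}\,\mr{Arg}(v)$ allowed to go to $0$. The bound there was
\[
C\|g\|_H\,\mr{e}^{\tilde a/k}|w|^3\mr{e}^{-0.5\mr{Re}(w)/k}\exp\Bigl[\tfrac{n}{k}\bigl(\tilde a+k\log|w|+\mr{Re}(w)-t\,\mr{Im}(w)\bigr)\Bigr],
\]
and the remark at the end of that proof says it is uniform for all $t'<t$. At $t=0$ the condition becomes $\mr{Re}(w)+k\log|w|+\tilde a<0$, which defines a logarithmic neighborhood of type roughly $(-\tilde a,k)$. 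A smaller neighborhood with strict margin $\mr{Re}(w)+k\log|w|+\tilde a\leq -\delta$ gives a geometric series in $n$. This is the normal convergence claimed for $H'$, uniformly in small $t$ on compact subsets, and it dominates the terms of $I_v^{\Delta}$ at the same time. The border terms $BT_n^1$ are handled identically, since they carry $|w|^{n+2}\mr{e}^{t_n\mr{Re}(w)}$ times the same bound on $\varphi_{n+3}$.

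\textbf{Main obstacle.} The difficulty is making the domination genuinely uniform as $v\to 1$. The estimate $|I^n\mc{L}g(p)|\leq C\mr{e}^{\tilde a (n-2)/k}$ holds uniformly up to the real axis only for $k\,\mr{Re}(p)\leq n-2$. I must therefore confirm that on $\gamma_{v,n}$ one has $k\,\mr{Re}(p)\leq n+0.5\leq n+1$, and this is exactly why $n+3$ integrations by parts were taken. The other part of the obstacle is reconciling the sign conventions for $\varphi_n$, the orientation of $\gamma_{\varepsilon,v}$, and the $1/2i\pi$ normalization. I would fix these by testing the formula on $g(w)=\mr{e}^{\beta w}$, where $\mc{L}g=1/(p-\beta)$ and $\varphi_n$ is explicit.
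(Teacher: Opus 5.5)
Your proposal is correct and follows the same route as the paper, whose proof just lets $v\to 1$ in Lemma \ref{lem_IPP_CV} and uses the bound $|\varphi_n(t)|\le C\mr{e}^{\tilde a n/k}\|g\|_H$ from Lemma \ref{lem_IPP_n} to get normal convergence on $\{\tilde a + k\,\mr{log}|w| + \mr{Re}(w) < 0\}$, which is a logarithmic neighborhood by the Appendix. Your termwise dominated-convergence argument, the check that $k\,\mr{Re}(p)\le (n+3)-2$ on $\gamma_{v,n}$, the identification of the border terms with $BT_n^1$, and the final analytic continuation (after shrinking $H'$ so it is connected and lies in the domain of $\mc{L}^{-1}\mc{L}g$) make explicit what the paper leaves implicit.
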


\begin{proof}
As explained before, we only have to let $v$ tend to $1$ in Lemma \ref{lem_IPP_CV}.
More explicitely, by Lemma \ref{lem_IPP_n}, we have for every $t\leq (n-2)/k$ 
\[
\begin{aligned}
|\varphi_n(t)| &\leq 2(k+1)\mr{e}^{(a+\mr{log}(2)) \frac{n-2}{k}}\|g\|_H\\
&\leq C\mr{e}^{\tilde{a}n/k}\|g\|_H.
\end{aligned}
\]
We can then estimate 
\[
\begin{aligned}
|\varphi_{n+3}(t)w^{n+3}\mr{e}^{wt}| &\leq C \|g\|_H\mr{e}^{\tilde{a}n/k}|w|^{n+3}\mr{e}^{t\:\mr{Re}(w)}\\
&\leq C\|g\|_H\mr{e}^{\tilde{a}n/k}|w|^{n+3}\mr{e}^{\frac{n-2}{k}\mr{Re}(w)}\\
&\leq C\|g\|_H|w|^3\mr{e}^{-2\mr{Re}(w)/k} \mr{exp} \left[ \frac{n}{k} ( \tilde{a} + k\:\mr{log}|w| + \mr{Re}(w)) \right].
\end{aligned}
\]
The series converges normally as soon as 
\[
\tilde{a} + k\:\mr{log}|w| + \mr{Re}(w) <0.
\]
As explained in the Appendix \ref{appendix_logarithmic}, this is a logarithmic neighborhood.
\end{proof}

Since the integral $I_{1}^{\Delta}(\mc{L}g,\mr{e}^{wt})$ depends continuously on the function $g$, we obtain an explicit method for computing $g$ from its formal development $\hat{g}$ : if $\hat{g}(w) = \sum_{\beta\in R} a_{\beta} \mr{e}^{\beta w}$ for some discrete subset $R\subset \mathbb{R}^+$, we see that
\[
\varphi_n(t) = \sum_{\beta\leq t} a_{\beta} \frac{(t-\beta)^{n-1}}{(n-1)!}
\]

The diagonal integration by parts $I_1^{\Delta}$ gives an explicit summation formula in terms of the formal development $\hat{g}(w)$.
In general, we say that a formal series $\hat{g}$ can be re-summated by a diagonal integration by parts when $I_1^{\Delta}$ converges for $w$ in a neighborhood of $-\infty$.

\begin{cor}
If a bounded function $g\in \mathcal{O}(H)$ is a uniform limit of normally convergent series $g_n=\sum_{\beta\in R_{n}} a_{\beta,n}\mr{e}^{\beta w}$, and the formal development of $g$ at $-\infty$, $\hat{g}(w) = \sum_{\beta\in R} a_{\beta}\mr{e}^{\beta w}$ has a closed discrete set of indices $R$, then the formal series $\hat{g}$ can be re-summated by a diagonal integration by parts.
Moreover, $I_1^{\Delta}(\mc{L}\hat{g},\mr{e}^{wt}) = g(w)$ whenever both are defined.
\end{cor}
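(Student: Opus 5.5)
The plan is to reduce everything to continuity of $g\mapsto I_1^{\Delta}(\mc{L}g,\mr{e}^{wt})$ together with Theorem \ref{thm_DIPP}. By Theorem \ref{thm_1}, $\mc{L}^{-1}\mc{L}g=g$ on $-1+H$. By Theorem \ref{thm_DIPP}, $g(w)=\frac{1}{2i\pi}I_1^{\Delta}(\mc{L}g,\mr{e}^{wt})$ on a logarithmic neighborhood $H'$. It therefore suffices to show two things: that the data entering $I_1^{\Delta}$ for $g$ (namely the functions $\varphi_n$) are determined by the formal development $\hat g$, and that the resulting series converges. Here $\mc{L}\hat g$ is understood as the hyperfunction $\mc{L}g$, whose support lies in $R$.

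\textbf{Support and coefficients.} Each $g_n$ is a normally convergent exponential sum, so $\mc{L}g_n=\sum a_{\beta,n}/(p-\beta)$ has support in $R_n$. I will apply Corollary \ref{cor_limit_series}, taking $R_n,R$ and the convergence of indices as implicitly part of the hypothesis that the formal development of $g$ has index set $R$. This gives $\mr{supp}(\mc{L}g)\subset R$. Since $R$ is closed and discrete, the same corollary shows that the partial sums $S_{[\beta-r,\beta+r]}g$ are limits of $S_{[\beta-r,\beta+r]}g_n$. Near each $\beta\in R$ the hyperfunction $\mc{L}g$ has finite order (Section \ref{sec_finite_order}), so locally it is a finite combination of derivatives of $\delta_\beta$. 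The coefficients $a_\beta$ of $\hat g$ are read off from these local pieces.

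\textbf{Computing $\varphi_n$.} I will compute $\varphi_n(t)=(I^n\mc{L}g)^-(t)-(I^n\mc{L}g)^+(t)$ on $[0,(n-2)/k]$. The main tool is the jump formula of Lemma \ref{lem_IPP_n}: the $n$-fold primitive of a local piece $a_\beta/(p-\beta)$ (respectively of its derivatives) has a jump across $[\beta,\infty[$ equal to $a_\beta(t-\beta)^{n-1}/(n-1)!$ (respectively the corresponding lower-order polynomial). The contributions of $\beta>t$ are holomorphic near $t$ and do not jump. Hence $\varphi_n(t)=\sum_{\beta\le t}a_\beta\frac{(t-\beta)^{n-1}}{(n-1)!}$, up to the $2i\pi$ normalization, which is a finite sum because $R$ is discrete. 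This formula depends only on $\hat g$, so $I_1^{\Delta}(\mc{L}\hat g,\mr{e}^{wt})$ is well defined. It coincides with $I_1^{\Delta}(\mc{L}g,\mr{e}^{wt})$, and by Theorem \ref{thm_DIPP} it converges normally on a logarithmic neighborhood and equals $g$ there, up to the factor $2i\pi$ absorbed in the notation. This proves that $\hat g$ is resummable by diagonal integration by parts.

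\textbf{Expected obstacle.} The delicate step is justifying that the jump of $I^n\mc{L}g$ across $\mathbb{R}^+$ is exactly the sum of the local contributions. The primitives $I^n$ are normalized at $-\infty$, while the hyperfunction is only defined modulo entire functions. I would handle this by passing to the limit from $g_n$. For $g_n$ the identity is explicit, because the Laplace transform of a finite exponential sum is exact. For the limit, I would use the uniform bounds of Lemma \ref{lem_IPP_n}: these are linear in $\|g\|_\infty$ and continuous in $g$, so $\varphi_n^{(g_m)}\to\varphi_n^{(g)}$ uniformly on $[0,(n-2)/k]$. On the other side, the coefficients $a_{\beta,m}$ converge to $a_\beta$ by continuity of partial sums (Theorem \ref{thm_partial_sums}).
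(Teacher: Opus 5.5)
Your proposal is correct and follows the route the paper indicates. Theorem~\ref{thm_DIPP} applied to $g$ is combined with the explicit formula $\varphi_n(t)=\sum_{\beta\le t}a_\beta\frac{(t-\beta)^{n-1}}{(n-1)!}$ for exponential sums, transferred to $g$ through the sup-norm continuity of $g\mapsto\varphi_n$, and hence of $I_1^{\Delta}$. Your ``expected obstacle'' is not real, though, so you need neither the limit over $g_n$ nor the convergence-of-indices hypothesis from Corollary~\ref{cor_limit_series}, which is not in the statement. Adding an entire function to the representative, or changing integration constants, changes the $n$-fold primitive only by an entire term, which has no jump across $\mathbb{R}^+$. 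The part of $\mc{L}g$ supported in $R\cap\,]t,\infty[$ is holomorphic on $\mathbb{C}\setminus[t',\infty[$ for some $t'>t$, so its primitives do not jump on $[0,t]$; your local computation is therefore already rigorous.
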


\subsection{Evanescent summation}

In this section, we explicit in which sense a function $g\in E_0(H)$ is the limit of its partial sums.
We will use notations from sections \ref{sec_continuity_partial_sums} and \ref{sec_DIPP}.
Using the points $t_n$ of section \ref{sec_DIPP} as cutting points for the partial sums, we obtain some bounded functions $S_{[t_0,t_n]}g\in E_0(-1+H)$.
We will also suppose here that each point $t_n$ is not in the support of $g$ for partial sums to behave correctly.
To simplify notations, we will write $S_ng = S_{[t_0,t_n]}g$, and $H'=-1+H$.

We can write $S_ng$ as the integral along a closed path $\gamma$ around the segment $[-1,t_n]$ and cutting $\gamma$ at the point $t_n$ to make $n+3$ integration by parts:
\[
\begin{aligned}
\int_{\gamma} \mc{L}g(p)\mr{e}^{wp}dp &= (-1)^{n+3} \int_{\gamma\setminus\{t_n\}} (I^{n+3}\mc{L}g)(p)w^{n+3}\mr{e}^{wp}dp \\
&\quad - \sum_{r=0}^{n+2} (-1)^r(I^{r+1}\mc{L}g)^+(t_n)w^r\mr{e}^{wt_n} + \sum_{r=0}^{n+2} (-1)^r(I^{r+1}\mc{L}g)^-(t_n)w^r\mr{e}^{wt_n}.
\end{aligned}
\]
Once again, $I^{n+3}\mc{L}g$ extends continuously to the segment $[t_0,t_n]$ from above and from below, and when $\gamma$ shrinks toward this segment, we obtain the formula 
\[
2i\pi S_ng(w) = (-1)^{n+3} \int_{t=t_0}^{t_n} \varphi_{n+3}(t) w^{n+3}\mr{e}^{wt}dt + BT_n^2(w),
\]
where the border term is 
\[
BT_n^2(w) = \sum_{r=0}^{n+2} (-1)^r\varphi_{n+3}^{(n-r+2)}(t_n)w^r\mr{e}^{wt_n}.
\]
Note that $\mc{L}g$ is holomorphic in a neighborhood of $t_n$ so that $\varphi_{n+3}$ is polynomial of order $n+2$ around $t_n$; the term $\varphi_{n+3}^{(n-r+2)}$ is the derivative of this polynomial.

\begin{df}
We define the $n$-th evanescent partial sum of $g$ as
\[
\tilde{S}_ng(w) = S_ng(w) - \frac{1}{2i\pi} BT_n^2(w).
\]
\end{df}

These evanescent partial sums are in fact the partial sums corresponing to the diagonal integration by parts:

\begin{lem}
For every $n\geq 1$, we have 
\[
2i\pi \tilde{S}_{n+1}g(w) = \sum_{r=0}^{n}(-1)^{r+3} \int_{t=t_r}^{t_{r+1}} \varphi_{r+3}(t)w^{r+3}\mr{e}^{wt}dt + \sum_{r=1}^{n} BT_r^1(w).
\]
\end{lem}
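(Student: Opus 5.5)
The plan is to avoid recomputing the border terms $BT^2_n$ directly. Instead I would first rewrite $\tilde S_n g$ as a single integral, and then obtain the identity by induction, using one integration by parts at each step.

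\textbf{Step 1: closed form for $\tilde S_n g$.} Substituting the formula $2i\pi S_n g(w) = (-1)^{n+3}\int_{t_0}^{t_n}\varphi_{n+3}(t)w^{n+3}\mr{e}^{wt}dt + BT^2_n(w)$ into the definition of $\tilde S_n g$ makes the border term cancel, so that
\[
2i\pi \tilde S_n g(w) = (-1)^{n+3}\int_{t_0}^{t_n}\varphi_{n+3}(t)w^{n+3}\mr{e}^{wt}dt .
\]
Here I will need to check that $\varphi_{n+3}$ is continuous on $[t_0,t_n]$. This follows from Lemma \ref{lem_IPP_n}, since $t_n=(n-0.5)/k\leq (n+1)/k$. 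Moreover $\varphi_{n+3}\equiv 0$ on $[t_0,0[$, because $I^{n+3}\mc{L}g$ is holomorphic off $\mathbb{R}^+$, so its two boundary values coincide there.

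\textbf{Step 2: the basic relation $\varphi_m'=\varphi_{m-1}$.} Since $(I^m\mc{L}g)'=I^{m-1}\mc{L}g$, the boundary values from above and from below satisfy the same relation wherever they are $C^1$. Taking the difference gives $\varphi_m'=\varphi_{m-1}$ on the range where both are defined. Integrating by parts once then gives
\[
\int_{\alpha}^{\beta}\varphi_{m+1}w^{m+1}\mr{e}^{wt}dt=\bigl[\varphi_{m+1}w^m\mr{e}^{wt}\bigr]_\alpha^\beta-\int_\alpha^\beta\varphi_m w^m\mr{e}^{wt}dt .
\]

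\textbf{Step 3: the recursion.} I split $2i\pi\tilde S_{n+1}g=(-1)^{n+4}\int_{t_0}^{t_{n+1}}\varphi_{n+4}w^{n+4}\mr{e}^{wt}dt$ into the pieces $[t_0,t_n]$ and $[t_n,t_{n+1}]$. On $[t_0,t_n]$ I apply Step 2 with $m=n+3$:
\begin{itemize}
\item the boundary contribution at $t_0$ vanishes, since $t_0<0$;
\item the boundary contribution at $t_n$ is $(-1)^{n+4}\varphi_{n+4}(t_n)w^{n+3}\mr{e}^{wt_n}$, which is a border term of the form $BT^1$;
\item the remaining integral is exactly $2i\pi\tilde S_n g$ by Step 1.
\end{itemize}
This gives
\[
2i\pi\tilde S_{n+1}g = 2i\pi\tilde S_n g + (\text{one }BT^1\text{ term}) + (\text{one integral of }\varphi_{m}\text{ over a single interval }[t_r,t_{r+1}]).
\]
Iterating this down to the first index, where the claim is the direct identity of Step 1 on one interval, produces the stated sum of interval integrals plus border terms.

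\textbf{Main obstacle.} The step I expect to require the most care is the index bookkeeping: matching the exponent $n+4$ against the $r+3$ appearing in the statement, and matching $\varphi_{n+4}(t_n)w^{n+3}$ against $BT^1_r=(-1)^{r+3}\varphi_{r+3}(t_r)w^{r+2}\mr{e}^{wt_r}$. I would first fix the conventions on a small case ($n=1$), adjusting the off-by-one shift in the indexing of $\tilde S$ if necessary. A secondary point is justifying $\varphi_m'=\varphi_{m-1}$ up to the endpoint $t_{n+1}$. For this I would use the uniform convergence as $\mr{Im}(p)\to0$ given by dominated convergence in Lemma \ref{lem_IPP_n}, applied to both $I^m\mc{L}g$ and $I^{m-1}\mc{L}g$. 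Near $t_n$, the hypothesis that $t_n\notin\mr{supp}$ makes $\varphi$ polynomial, so differentiability there is automatic.
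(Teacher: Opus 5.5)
Your Steps 1 and 2 are sound. The border term does cancel, so $2i\pi\tilde S_ng(w)=(-1)^{n+3}\int_{t_0}^{t_n}\varphi_{n+3}(t)w^{n+3}\mathrm{e}^{wt}dt$. Also $\varphi_m'=\varphi_{m-1}$ holds on the ranges you need; the simplest justification is to differentiate in $\beta$ the integral representations of $(I^m\mathcal{L}g)^{\pm}$ over $\mathscr{C}^{\mp}$ from Lemma \ref{lem_IPP_n}, and the hypothesis $t_n\notin\mathrm{supp}$ plays no role here.

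The gap is in Step 3, in exactly the bookkeeping you postponed, and it is not a relabelling issue. In the statement, $\varphi_{r+3}$ is paired with the interval $[t_r,t_{r+1}]$, and $BT^1_r$ pairs $\varphi_{r+3}$ with the node $t_r$ and the power $w^{r+2}$. Your recursion instead produces two terms that do not fit:
the node term $(-1)^{n+4}\varphi_{n+4}(t_n)w^{n+3}\mathrm{e}^{wt_n}$, which is not any $BT^1_r$;
and the interval term $(-1)^{n+4}\int_{t_n}^{t_{n+1}}\varphi_{n+4}w^{n+4}\mathrm{e}^{wt}dt$, which carries one index too many.
Your base case is not ``direct'' either: $2i\pi\tilde S_1g=\int_{t_0}^{t_1}\varphi_4w^4\mathrm{e}^{wt}dt$, which is not $-\int_{t_0}^{t_1}\varphi_3w^3\mathrm{e}^{wt}dt$. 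Re-indexing $\tilde S$ is not an option, since $\tilde S_n$ is fixed by its definition.

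The missing step is one further integration by parts on the new interval:
\[
\begin{aligned}
(-1)^{n+4}\int_{t_n}^{t_{n+1}}\varphi_{n+4}w^{n+4}\mathrm{e}^{wt}dt
&=(-1)^{n+4}\bigl[\varphi_{n+4}(t)w^{n+3}\mathrm{e}^{wt}\bigr]_{t_n}^{t_{n+1}}\\
&\quad+(-1)^{n+3}\int_{t_n}^{t_{n+1}}\varphi_{n+3}w^{n+3}\mathrm{e}^{wt}dt .
\end{aligned}
\]
Its contribution at $t_n$ cancels your stray node term. The last integral is the $r=n$ summand. Its contribution at $t_{n+1}$ is exactly $BT^1_{n+1}(w)$. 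The same computation, using $\varphi_4(t_0)=0$, gives $2i\pi\tilde S_1g=-\int_{t_0}^{t_1}\varphi_3w^3\mathrm{e}^{wt}dt+BT^1_1(w)$.

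The induction then closes, but on the identity
\[
2i\pi\tilde S_{n+1}g(w)=\sum_{r=0}^{n}(-1)^{r+3}\int_{t_r}^{t_{r+1}}\varphi_{r+3}(t)w^{r+3}\mathrm{e}^{wt}dt+\sum_{r=1}^{n+1}BT^1_r(w),
\]
where the border sum runs to $n+1$, not $n$. The extra term does not vanish in general. For $g(w)=\mathrm{e}^{\beta w}$ with $0<\beta<t_1$, each $\varphi_m(t)$ equals a fixed nonzero constant times $(t-\beta)^{m-1}/(m-1)!$ for $t>\beta$, so $BT^1_{n+1}\neq 0$; you can check the case $n=1$ by hand.

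So the identity as printed cannot be proved, and carrying out your plan correctly yields the corrected one. The paper's own justification is only ``an easy application of the classical integration by parts''. The discrepancy does not affect Theorem \ref{thm_evanescent}, because $BT^1_{n+1}\to 0$ wherever the diagonal integration by parts converges normally.
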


This lemma is an easy application of the classical integration by parts.
From this we obtain immediately:

\begin{thm}
\label{thm_evanescent}
The sequence of evanescent partial sums $\tilde{S}_ng$ tends uniformly to $g$ on a logarithmic neighborhood of $-\infty$. 
\end{thm}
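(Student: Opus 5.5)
The plan is to combine the preceding lemma, which identifies $2i\pi\tilde{S}_{n+1}g$ with the $n$-th partial sum of the series defining $I_1^{\Delta}(\mc{L}g,\mr{e}^{wt})$, with the normal convergence proved in Theorem \ref{thm_DIPP}. By Theorem \ref{thm_1} we also know that $\mc{L}^{-1}\mc{L}g=g$ on $-1+H$. Theorem \ref{thm_DIPP} then gives $g(w)=\frac{1}{2i\pi}I_1^{\Delta}(\mc{L}g,\mr{e}^{wt})$ on a logarithmic neighborhood $H'$. It follows that
\[
g(w)-\tilde{S}_{n+1}g(w)=\frac{1}{2i\pi}\Big(\sum_{r>n}(-1)^{r+3}\int_{t_r}^{t_{r+1}}\varphi_{r+3}(t)w^{r+3}\mr{e}^{wt}dt+\sum_{r>n}BT_r^1(w)\Big),
\]
so it suffices to show that these tails tend to $0$ uniformly on a (possibly smaller) logarithmic neighborhood.

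For uniformity, I would rerun the estimates from the proof of Theorem \ref{thm_DIPP}. For $t\leq (n-2)/k$, each integral term is bounded by
\[
C\|g\|_H|w|^3\mr{e}^{-2\mr{Re}(w)/k}\exp\Big[\frac{n}{k}\big(\tilde{a}+k\,\mr{log}|w|+\mr{Re}(w)\big)\Big],
\]
up to the length factor $1/k$ of the interval. The boundary terms $BT_n^1$ satisfy the same bound with $|w|^{n+2}$ in place of $|w|^{n+3}$, using $|\varphi_{n+3}(t_n)|\leq C\mr{e}^{\tilde{a}n/k}\|g\|_H$. Normal convergence at each point is not enough, so I would shrink the domain. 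I would take $H''$ defined by
\[
\tilde{a}+1+k\,\mr{log}|w|+\mr{Re}(w)\leq 0 .
\]
This is again a logarithmic neighborhood by the Appendix. On $H''$ the exponential factor is at most $\mr{e}^{-n/k}$.

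The main obstacle is that the prefactor $|w|^3\mr{e}^{-2\mr{Re}(w)/k}$ is unbounded on $H''$, since $\mr{Re}(w)\to-\infty$. I would absorb it into the geometric decay by splitting the margin. Using $\tilde{a}+k\,\mr{log}|w|+\mr{Re}(w)\leq -1-\delta|\mr{Re}(w)|-3\,\mr{log}|w|$ for a suitably shifted domain (again logarithmic, since $\mr{log}|w|$ and $|\mr{Re}(w)|$ are dominated by the linear margin far out), the $n$-th term is bounded for $n\geq n_0$ by
\[
C\|g\|_H\,|w|^{3-3n/k}\,\mr{e}^{(2/k-\delta n/k)|\mr{Re}(w)|}\,\mr{e}^{-n/k}\leq C'\|g\|_H\,\mr{e}^{-n/k}.
\]
Here $n_0$ is chosen so that $3n_0/k\geq 3$ and $\delta n_0\geq 2$. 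The finitely many terms with $n<n_0$ do not matter for the tail. Summing, the tail after index $n$ is at most $C''\|g\|_H\mr{e}^{-n/k}$ uniformly on that neighborhood, which gives uniform convergence $\tilde{S}_ng\to g$. The index shift between $\tilde{S}_{n+1}$ and the partial sum is harmless.
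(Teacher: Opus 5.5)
Your proposal is correct and follows the paper's route: the paper also deduces the theorem ``immediately'' from the lemma identifying $2i\pi\tilde{S}_{n+1}g$ with the partial sums of $I_1^{\Delta}(\mc{L}g,\mr{e}^{wt})$, combined with Theorem~\ref{thm_DIPP} and $\mc{L}^{-1}\mc{L}g=g$. Your tail estimate supplies the uniformity that the paper leaves implicit, since its proof of Theorem~\ref{thm_DIPP} only gives convergence with $w$-dependent bounds; one small correction is that the domain $\{(1-\delta)\mr{Re}(w)+(k+3)\,\mr{log}|w|+\tilde{a}+1\leq 0\}$ (intersected with $\{\mr{Re}(w)\leq -1\}$ so that $|w|\geq 1$) is not a shift of $H''$ but a logarithmic neighborhood with the larger coefficient $(k+3)/(1-\delta)$, which is still all the statement requires.
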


Note that if the function $g$ has formal development $\hat{g}(w) = \sum_{\beta\in R} a_{\beta}\mr{e}^{\beta w}$ for some discrete subset $R\subset \mathbb{R}^+$, then 
\[
\tilde{S}_ng(w) = \sum_{\beta\leq t_n} a_{\beta} \mr{e}^{\beta w} + \sum_{r=0}^{n+2} b_r w^r\mr{e}^{wt_n}
\]
for the constants $b_r=(-1)^{r+1}\varphi_{n+3}^{(n-r+2)}(t_n) \in \mathbb{C}$, in particular the formal development of $\tilde{S}_ng$ coincides with that of $g$ up to order $t_n$.
In fact the difference between $\tilde{S}_ng$ and $S_ng$ is supported on $t=t_n$: the hyperfunction $\mc{L} (BT_n^2)$ is a hyperfunction of order $n+2$ supported on $\{t_n\}$.

\appendix

\section{Equations of logarithmic neighborhoods}
\label{appendix_logarithmic}

Consider the following neighborhoods of $-\infty$:
\[
H_{a,k} = \{ w = x+iy\in \mathbb{C} \:|\: x< a - k\:\mr{log}(1+|y|) \},
\]
\[
\widetilde{H}_{a,k} = \{ w = x+iy \in \mathbb{C} \:|\: x + \frac{k}{2}\:\mr{log}(x^2+y^2) < a \}.
\]

These two families of neighborhoods are equivalent in the sense of the following lemma.
In particular, they can both be used as a definition for logarithmic neighborhoods.

\begin{lem}
If $a\leq \mr{min}(-1,-k)$, then $\widetilde{H}_{a,k}$ is of the form $\{ x < \rho(|y|)\}$ for some function $\rho$.
Moreover, $\widetilde{H}_{a,k}\subset H_{a+1,k}$ and there exists an explicitable function $(a,k) \mapsto M_{a,k}$ such that $H_{a,k}\subset \widetilde{H}_{a+M_{a,k},k}$.
\end{lem}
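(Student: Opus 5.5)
The plan is to study, for fixed $y$, the one-variable function
\[
f_y(x) = x + \frac{k}{2}\,\mr{log}(x^2+y^2) = x + k\,\mr{log}|w|,\qquad w=x+iy ,
\]
so that $\widetilde{H}_{a,k} = \{ f_y(x) < a\}$. Its derivative is $f_y'(x) = 1 + \frac{kx}{x^2+y^2}$, which is negative exactly on the open disk $D=\{|w+k/2|<k/2\}\subset\{|w|<k\}$. One caution: as stated, the set $\widetilde{H}_{a,k}$ also contains a small bounded piece around $w=0$, where $\mr{log}|w|\to-\infty$. So I will interpret the first claim as concerning the component of $\widetilde{H}_{a,k}$ that reaches $-\infty$, or equivalently the part lying in $\{|w|\geq 1\}$, which is all that matters for neighborhoods of $-\infty$. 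On that component I will show that $f_y<a$ forces us to stay away from $D$. Indeed, if $|w|\geq 1$ and $f_y(x)<a$, then $x<a-k\,\mr{log}|w|\leq a\leq -1$. Also $x\leq -k$ is forced once one checks that on the boundary circle $|w|=\mr{max}(1,k)$ one has $f\geq a$. Granting this, $f_y$ is strictly increasing on the half-line $]-\infty,x_y]$ where it stays below $a$, and it tends to $-\infty$ there. Setting $\rho(|y|)$ to be the unique root of $f_y(x)=a$ on this branch gives the description $\{x<\rho(|y|)\}$. The hypothesis $a\leq\mr{min}(-1,-k)$ is exactly what should make the branch avoid $D$ and the unit disk; I expect this case check to be the only fiddly point.

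For the inclusion $\widetilde{H}_{a,k}\subset H_{a+1,k}$, I take $w$ in the unbounded component, so $|w|\geq 1$ and $|x|\geq 1$. Then $|w|\geq\mr{max}(|x|,|y|)\geq \frac{1+|y|}{2}$, which gives
\[
x<a-k\,\mr{log}|w|\leq a+k\,\mr{log}2-k\,\mr{log}(1+|y|).
\]
This produces the shift $k\,\mr{log}2$ rather than $1$. To reach $a+1$, I will refine the bound using $|x|\geq k$ (from the previous paragraph): $|w|^2 = x^2+y^2$, and comparing with $(1+|y|)^2$ directly gives $\frac{k}{2}\mr{log}\frac{(1+|y|)^2}{x^2+y^2}\leq \frac{k}{2}\,\mr{log}\left(1+\frac{2|y|+1}{x^2+y^2}\right)$. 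This quantity should be at most $1$ once $|x|\geq\mr{max}(1,k)$. If it does not close cleanly, I will settle for a shift $c(k)$ in place of $1$, which suffices for all uses in the paper.

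For the reverse inclusion, let $w\in H_{a,k}$, so $1+|y|< \mr{e}^{(a-x)/k}$ and $|x|=-x$. Then
\[
f = x+k\,\mr{log}|w|\leq x+k\,\mr{log}(|x|+1+|y|)\leq x+k\,\mr{log}\bigl(|x|+\mr{e}^{(a-x)/k}\bigr) = a+k\,\mr{log}\bigl(1+|x|\mr{e}^{(x-a)/k}\bigr).
\]
Maximizing $s\mapsto s\,\mr{e}^{-(s+a)/k}$ over $s\geq 0$ gives the value $k\,\mr{e}^{-1-a/k}$. This yields the explicit bound
\[
M_{a,k}=k\,\mr{log}\bigl(1+k\,\mr{e}^{-1-a/k}\bigr),
\]
and hence $H_{a,k}\subset\widetilde{H}_{a+M_{a,k},k}$. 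The main obstacle is the first part, namely the monotonicity and component bookkeeping around the disk $D$. The inclusions themselves are direct estimates.
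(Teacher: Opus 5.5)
Your proposal is correct, and in places it is more careful than the paper's proof. The routes differ mainly in the two inclusions.

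For the first claim, you and the paper rely on the same fact: $\partial_x\tilde F=\frac{x^2+kx+y^2}{x^2+y^2}>0$ for $x<-k$. The paper never mentions the bounded piece of $\widetilde H_{a,k}$ around $0$. Your restriction to the unbounded component is the right reading, since small $x>0$ on the real axis lies in $\widetilde H_{a,k}$ but not in $H_{a+1,k}$. Note that $|w|\ge 1$ together with $\tilde F<a$ already gives $x<a\le -k$. So the circle check is only needed to separate the components. The line $x=-k$ does this job too: on it $\tilde F\ge -k+k\log k\ge -1\ge a$, which also guarantees that your root $\rho(|y|)\le -k$ exists.

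For $\widetilde H_{a,k}\subset H_{a+1,k}$, the paper uses only $|x|\ge 1$, through $\log(x^2+y^2)\ge 2\log(1+|y|)-\log 2$. Multiplied by $k/2$, this yields a shift of $\frac{k\log 2}{2}$; the paper drops the factor $k$. That shift is $\le 1$ only for $k\le 2/\log 2$. Your use of $|x|>k$ is what makes the shift $1$ uniform in $k$, and it closes without any fallback. By $x^2+y^2\ge 2|x||y|$ and $|x|>\max(1,k)$,
\[
\frac{k}{2}\log\Bigl(1+\frac{2|y|+1}{x^2+y^2}\Bigr)\le \frac{k}{2}\Bigl(\frac{1}{|x|}+\frac{1}{x^2}\Bigr)\le \frac{k}{|x|}<1 .
\]

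For $H_{a,k}\subset \widetilde H_{a+M_{a,k},k}$, the paper restricts $\tilde F$ to the boundary curve of $H_{a,k}$. It integrates the $y$-derivative term by term to get a bound quadratic in $a$, and implicitly uses monotonicity in $x$ to pass to the interior. Your pointwise estimate $|w|\le |x|+1+|y|<|x|+\mathrm{e}^{(a-x)/k}$, followed by a one-variable maximization, is shorter. It needs neither the boundary analysis nor monotonicity, and it gives a cleaner explicit constant.

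It can also be sharpened at no cost. On $H_{a,k}$ one has $-x>|a|\ge k$, and there $s\mapsto s\,\mathrm{e}^{-(s+a)/k}$ is decreasing. So $M_{a,k}=k\log(1+|a|)$ already works. This is essentially optimal, since $\tilde F-a=k\log|a|$ at the boundary point $x=a$, $y=0$.
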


\begin{proof}
Denote by $F(x,y) = x + k\:\mr{log}(1+|y|)$ and $\tilde{F}(x,y) = x + \frac{k}{2}\mr{log}(x^2+y^2)$.
Note first that 
\[
\frac{\partial_{}\tilde{F}}{\partial_{}x }(x,y) = 1 + \frac{k}{2}\frac{2x}{x^2+y^2} = \frac{x^2+kx+y^2}{x^2+y^2};
\]
It follows that for $x<-k$, if $x+iy\in \widetilde{H}_{a,k}$ the whole interval $]-\infty,x] +iy$ is included in $\widetilde{H}_{a,k}$.

Consider a point $x+iy\in \widetilde{H}_{a,k}$. Since $x<-1$, we have $\mr{log}(x^2+y^2) \geq \mr{log}(1+y^2) \geq 2\:\mr{log}(1+|y|)-\mr{log}(2)$ (from Cauchy-Schwarz's inequality, we have $(1+y)^2\leq 2 (1 + y^2)$).
Thus, 
\[
x + k\:\mr{log}(1+|y|) \leq x + \frac{k}{2}\mr{log}(x^2+y^2) + \frac{\mr{log}(2)}{2},
\]
and the point $x+iy$ belongs to $H_{a',k}$ where $a'=a+\frac{\mr{log}(2)}{2}$.

To study the other inclusion, it suffices to study the restriction of $\tilde{F}$ to the curve $C_a = \partial_{}H_{a,k} = \{ x = a-k\:\mr{log}(1+|y|)\}$.
By symmetry, it suffices to study its restriction to the upper part $\{y>0\}$.
We have for $y>0$ and $x=a-k\:\mr{log}(1+y)$,
\[
\frac{\partial_{}x }{\partial_{}y} = \frac{-k}{1+y},
\]
\[
\begin{aligned}
\frac{\partial_{}\tilde{F}\vert_{C_a}}{\partial_{}y} &= \frac{\partial_{}x}{\partial_{}y} + \frac{k}{2} \frac{2x \frac{\partial_{}x }{\partial_{}y }}{x^2+y^2}+ \frac{k}{2} \frac{2y }{x^2+y^2}\\
&= \frac{-k}{1+y} + \frac{ky}{x^2+y^2} -k^2 \frac{a-k\:\mr{log}(1+y)}{(1+y)(x^2+y^2)}\\
&= k\frac{y-x^2}{(1+y)(x^2+y^2)} -\frac{ak^2}{(1+y)(x^2+y^2)} + k^3 \frac{\mr{log}(1+y)}{1+y}\frac{1}{x^2+y^2}.
\end{aligned}
\]
Each of these three terms is integrable, and so $\tilde{F}\vert_{C_a}$ is bounded.
More precisely, since $x>1$, we have 
\[
\int_0^{\infty} \frac{ak^2}{(1+y)(x^2+y^2)} dy \leq ak^2 \int_0^{\infty} \frac{dy}{1+y^2}\leq \frac{\pi a k^2}{2},
\]
\[
\int_0^{\infty} k^3 \frac{\mr{log}(1+y)}{1+y} \frac{dy}{x^2+y^2}\leq k^3 \int_0^{\infty} \frac{dy}{1+y^2}\leq \frac{\pi k^3}{2};
\]
to estimate the first term, we can use $x^2 = a^2-2ak\:\mr{log}(1+y) + k^2\:\mr{log}(1+y)^2$, and estimate $\frac{1}{1+y}\leq 1$, $\frac{\mr{log}(1+y)}{1+y}\leq 1$ and $\frac{\mr{log}(1+y)^2}{1+y}\leq 1$ (the maximum of this last function is $4/\mr{e}^2$).
Thus
\[
\int_0^{\infty} k \frac{y-x^2}{(1+y)(x^2+y^2)}dy \leq k(1+a^2+2ak+k^2) \frac{\pi}{2}.
\]
Since $\tilde{F}(a,0) = a + k\:\mr{log}(|a|)$, we get
\[
\mr{max}_{C_a}|\tilde{F}-a-k\:\mr{log}(|a|)| \leq k \frac{\pi}{2}(1+a^2+3ak+2k^2)
\]
giving the constant $M_{a,k}$ of the lemma.
\end{proof}

\bibliography{mybib}{}

@incollection{ecalle_small_denominators,
 author = {Ecalle, Jean},
 title = {Compensation of small denominators and ramified linearisation of local objects},
 booktitle = {Complex analytic methods in dynamical systems. Proceedings of the congress held at Instituto de Matem\'atica Pura e Aplicada, IMPA, Rio de Janeiro, Brazil, January 1992},
 pages = {135--199},
 year = {1994},
 publisher = {Paris: Soci{\'e}t{\'e} Math{\'e}matique de France},
 language = {English},
 url = {smf4.emath.fr/Publications/Asterisque/1994/222/html/smf_ast_222_135-199.html},
 zbMATH = {671754},
 Zbl = {0810.58036}
}

@article{ilyashenko_centennial,
 author = {Ilyashenko, Yu.},
 title = {Centennial history of {Hilbert}'s 16th {Problem}},
 fjournal = {Bulletin of the American Mathematical Society. New Series},
 journal = {Bull. Am. Math. Soc., New Ser.},
 issn = {0273-0979},
 volume = {39},
 number = {3},
 pages = {301--354},
 year = {2002},
 language = {English},
 doi = {10.1090/S0273-0979-02-00946-1},
 zbMATH = {1756730},
 Zbl = {1004.34017}
}

@book{morimoto_hyperfunctions,
 author = {Morimoto, Mitsuo},
 title = {An introduction to {Sato}'s hyperfunctions. {Transl}. from the {Japanese} by {Mitsuo} {Morimoto}},
 fseries = {Translations of Mathematical Monographs},
 series = {Transl. Math. Monogr.},
 issn = {0065-9282},
 volume = {129},
 isbn = {0-8218-4571-3},
 year = {1993},
 publisher = {Providence, RI: American Mathematical Society},
 language = {English},
 zbMATH = {482841},
 Zbl = {0811.46034}
}

@article{robbins_formula,
 author = {Robbins, Herbert},
 title = {A remark on {Stirling}'s formula},
 fjournal = {American Mathematical Monthly},
 journal = {Am. Math. Mon.},
 issn = {0002-9890},
 volume = {62},
 pages = {26--29},
 year = {1955},
 language = {English},
 doi = {10.2307/2308012},
 zbMATH = {3115226},
 Zbl = {0068.05404}
}

@article{sato_hyperfunctions1,
 author = {Sato, Mikio},
 title = {Theory of hyperfunctions. {I}},
 fjournal = {Journal of the Faculty of Science. Section I},
 journal = {J. Fac. Sci., Univ. Tokyo, Sect. I},
 issn = {0368-2269},
 volume = {8},
 pages = {139--193},
 year = {1959},
 language = {English},
 zbMATH = {3143096},
 Zbl = {0087.31402}
}

@article{sato_hyperfunctions2,
 author = {Sato, Mikio},
 title = {Theory of hyperfunctions. {II}},
 fjournal = {Journal of the Faculty of Science. Section I},
 journal = {J. Fac. Sci., Univ. Tokyo, Sect. I},
 issn = {0368-2269},
 volume = {8},
 pages = {387--437},
 year = {1960},
 language = {English},
 zbMATH = {3158986},
 Zbl = {0097.31404}
}

@book{yoccoz_diviseurs,
 author = {Yoccoz, Jean-Christophe},
 title = {Petits diviseurs en dimension 1},
 fseries = {Ast{\'e}risque},
 series = {Ast{\'e}risque},
 issn = {0303-1179},
 volume = {231},
 year = {1995},
 publisher = {Paris: Soci{\`e}t{\'e} Math. de France},
 language = {French},
 zbMATH = {827357},
 Zbl = {0836.30001}
}
\bibliographystyle{acm}

\vfill
\textsc{Universidade Federal Fluminense - Instituto de Matemática e Estatística, Niterói, Brasil}

\textit{Email :} olivier\_thom@id.uff.br

\end{document}